\newcommand{\llll}[1]{\left\|{#1}\right\|}
\newcommand{\oo}[1]{\left({#1}\right)}
\newcommand{\cc}[1]{\left[{#1}\right]}
\newcommand{\ben}{\begin{eqnarray*}}
\newcommand{\een}{\end{eqnarray*}}
\newcommand{\norm}[1]{\left|{#1}\right|}
\newcommand{\intcurve}[1]{\int_{\Gamma}{{#1}\,ds}}
\newcommand{\intmink}[1]{\int_{\Gamma}{{#1}\,d\sigma}}
\newcommand{\kosc}{\mathscr{K}_{osc}}
\newcommand{\kave}{(\kappa-\bar{\kappa})}
\newtheorem{thm}{Theorem}[section]
\newtheorem{prop}[thm]{Proposition}
\newtheorem{lem}[thm]{Lemma}
\newtheorem{cor}[thm]{Corollary}
\theoremstyle{remark}
\newtheorem{remark}{Remark}
\begin{document}


\title{The Anisotropic Polyharmonic Curve Flow for Closed Plane Curves}

\author[S. Parkins]{Scott Parkins}
\address{Institute for Mathematics and its Applications, School of Mathematics and Applied Statistics\\
University of Wollongong\\
Northfields Ave, Wollongong, NSW $2500$\\
Australia}
\email{srp854@uow.edu.au}

\author[G. Wheeler]{Glen Wheeler}
\address{Institute for Mathematics and its Applications, School of Mathematics and Applied Statistics\\
University of Wollongong\\
Northfields Ave, Wollongong, NSW $2500$\\
Australia}
\email{glenw@uow.edu.au}

\thanks{The research of the first author was supported by an Australian
Postgraduate Award. The research of the second author was partially supported
by Australian Research Council Discovery Project DP150100375.}

\subjclass{53C44}

\begin{abstract}
We study the curve diffusion flow for closed curves immersed in the Minkowski plane $\mathcal{M}$, which is equivalent to the Euclidean plane endowed with a closed, symmetric, convex curve called an \emph{indicatrix} that scales the length of a vector in $\mathcal{M}$ depending on its length. The indiactrix $\partial\mathcal{U}$ (where $\mathcal{U}\subset\mathbb{R}^{2}$ is a convex, centrally symmetric domain) induces a second convex body, the isoperimetrix $\tilde{\mathcal{I}}$. This set is the unique convex set that miniminises the isoperimetric ratio (modulo homothetic rescaling) in the Minkowski plane. We prove that under the flow, closed curves that are initially close to a homothetic rescaling of the isoperimetrix in an averaged $L^{2}$ sense exists for all time and converge exponentially fast to a homothetic rescaling of the isoperimetrix that has enclosed area equal to the enclosed area of the initial immersion.

\end{abstract}

\maketitle


\newcommand\sfrac[2]{{#1/#2}}

\newcommand\cont{\operatorname{cont}}
\newcommand\diff{\operatorname{diff}}


\begin{section}{Introduction}\label{SectionIntroduction}
We consider a convex, centrally symmetric domain $\mathcal{U}\subset\mathbb{R}^{2}$ with symmetry centre $O$. We assume that $\partial\mathcal{U}$ is smooth with strictly positive Euclidean curvature. $\partial\mathcal{U}$ can be expressed as $\oo{r\oo{\theta}\cos\theta,r\oo{\theta}\sin\theta}$ with $\theta\in\left[0,2\pi\right)$ and $r>0$ and $r\oo{\theta+\pi}=r\oo{\theta}$.
For a vector $x\in\mathbb{R}^{2}$ with $x=\norm{x}\oo{\cos\theta,\sin\theta}$ (where $\norm{\cdot}$ is the regular Euclidean norm) the \emph{Minkowski norm}, $l\oo{x}$, of $x$ is defined by
\[
l\oo{x}=\frac{\norm{x}}{r\oo{\theta}}.
\]
Here $\theta=\theta\oo{x}$ is as defined earlier. Aptly, we then define the \emph{Minkowski plane} $\oo{\mathcal{M}^{2},d}$ with $\mathcal{M}^{2}=\mathbb{R}^{2}$ and $d$ the distance metric $d:\mathcal{M}^{2}\times\mathcal{M}^{2}\rightarrow\left[0,\infty\right)$ given by
\[
d\oo{x,y}=l\oo{x-y}.
\]
Hence for any $x\in\partial\mathcal{U}$ we have $d\oo{x,O}=l\oo{x}=1$, and so we define $\partial\mathcal{U}$ to be the \emph{Minkowski unit circle}, or \emph{indicatrix} of $\mathcal{M}^{2}$. The Minkowski plane is a vector space in which vector lengths are directionally-dependent. It carries its own notions of geometric quantities such as lengths and curvature which are reminiscent of their Euclidean counterparts. The indiactrix $\partial\mathcal{U}$ (where $\mathcal{U}\subset\mathbb{R}^{2}$ is a convex, centrally symmetric domain) induces a second convex body, the isoperimetrix $\tilde{\mathcal{I}}$. This set is the unique convex set that miniminises the isoperimetric ratio $\mathscr{I}$ (modulo homothetic rescaling) in the Minkowski plane (see Proposition \ref{IsoperimetrixProposition1}). The basics of convex body geometry, including the Minkowski plane, are introduced are introduced in greater detail in Section \ref{ConvexSection}.
\\\\
For $p\in\mathbb{N}$, we consider a one-parameter family of closed immersed curves $\Gamma:\mathbb{S}\times\left[0,T\right)\rightarrow\mathcal{M}^{2}$ with Minkowski normal velocity equal to $(-1)^{p}\kappa_{\sigma^{2p}}$:
\begin{equation}
\partial_{t}\Gamma(\sigma,t)=(-1)^{p}\kappa_{\sigma^{2p}}\cdot N(\sigma,t).\label{APH}\tag{$PF_{p}$}
\end{equation}
Here $\kappa_{\sigma^{2p}}=\frac{\partial^{2p}\kappa}{\partial\sigma^{2p}}$ refers to the $2p^{th}$ derivative of the Minkowski curvature with respect to the Minkowski arc length parameter (these concepts are introduced in depth in Section \ref{SectionIntroduction}). We will henceforth refer to a one-parameter family of closed curves evolving via \eqref{APH} as a \emph{$2\oo{p+1}$-anisotropic polyharmonic curve flow}, and it naturally generalises its lower-order Euclidean counterparts.
\\\\
If we consider the dual spaces to the Sobolev spaces $H^{p}$, which are denoted by $H^{-p}$ and consist of the bounded linear functionals $L:H^{p}\rightarrow\mathbb{R}$, 
then it can be shown that the anisotropic polyharmonic curves flows given by \eqref{APH} constitute a natural hierarchy of gradient flows for length in the Sobolev spaces $H^{-p}$. The $p^{th}$ step in the hierarchy corresponds to a flow of order $2(p+1)$. For example, the curve shortening flow (see, for example \cite{gage1983isoperimetric,Hamilton2,grayson1987heat} for the case of curves in the Euclidean plane and \cite{Gage1} for curves in the anisotropic setting that we are using today) is the $H^{-0}=L^{2}$ gradient flow for length, and the curve diffusion flow (see, for example \cite{Wheeler5,edwards2014shrinking}) is the $H^{-1}$ gradient flow for length. 
\\\\
We present the main theorem for this paper. It can be viewed as a higher-order
anisotropic analogue of the main results from \cite{parkins2015polyharmonic,Wheeler5}.

\begin{thm}\label{MainTheorem1}
Suppose that $\Gamma_{0}:\mathbb{S}^{1}\rightarrow\mathcal{M}^{2}$ is a regular
smooth immersed closed curve with $\mathscr{A}\oo{\Gamma_{0}}>0$.
Define $\mathscr{Q}:=h^{3}\oo{h+h_{\theta\theta}}$, where $h=r^{-1}$ is the
radial support function corresponding to the indicatrix $\partial\mathcal{U}$.
Then there exists a constant $\varepsilon_{0}>0$ such that if
\begin{equation}
\kosc(\Gamma_{0})<\varepsilon_{0}\,\,\text{and}\,\,\mathscr{I}\oo{\Gamma_{0}}<\exp\oo{\varepsilon_{0}/8\mathscr{A}(\tilde{\mathcal{I}})^{2}},\label{MainTheorem1,2}
\end{equation}
then the $2\oo{p+1}$-anisotropic polyharmonic curve flow
$\Gamma:\mathcal{S}^{1}\times\left[0,T\right)\rightarrow\mathcal{M}^{2}$ with
initial data $\Gamma\oo{\cdot,0}=\Gamma_{0}$ exists for all time and converges
exponentially fast to a homothetic rescaling of the isoperimetrix
$\tilde{\mathcal{I}}$ with enclosed area $\mathscr{A}\oo{\Gamma_{0}}$.
\end{thm}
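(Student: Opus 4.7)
My plan is to adapt the strategy from \cite{parkins2015polyharmonic,Wheeler5} to the Minkowski setting, exploiting the fact that \eqref{APH} is the $H^{-p}$ gradient flow of Minkowski length. First I would set up the basic evolution equations along the flow: Minkowski length $\mathscr{L}$ is monotone non-increasing as the energy of the gradient flow, and a direct integration by parts in the Minkowski arclength $\sigma$ shows that the enclosed area $\mathscr{A}$ is preserved, since the speed $(-1)^{p}\kappa_{\sigma^{2p}}$ integrates to zero against the Minkowski normal by the closed-curve identity. Together these force the isoperimetric ratio $\mathscr{I}(\Gamma_{t})$ to be monotone non-increasing, so the second hypothesis in \eqref{MainTheorem1,2} is preserved and can be used uniformly in time.

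The key technical step is to derive a differential inequality of the schematic form
\begin{equation*}
\frac{d}{dt}\kosc(\Gamma_{t}) \;\le\; -c_{1}\intmink{\bigl(\kappa_{\sigma^{p+1}}\bigr)^{2}} + c_{2}\,\kosc(\Gamma_{t})^{\beta}\intmink{\bigl(\kappa_{\sigma^{p+1}}\bigr)^{2}}
\end{equation*}
for some exponent $\beta>0$, where the constants $c_{1},c_{2}$ depend only on $\mathscr{A}(\tilde{\mathcal{I}})$ and $\mathscr{A}(\Gamma_{0})$. This is obtained by computing $\partial_{t}\kappa$ along \eqref{APH}, subtracting the Minkowski mean $\bar{\kappa}$, and integrating against $\kave$ in the Minkowski measure. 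The anisotropic weight $\mathscr{Q}=h^{3}(h+h_{\theta\theta})$ enters naturally as the Jacobian relating Euclidean and Minkowski arclength; the lower-order error terms produced are absorbed via a Minkowski-adapted Poincar\'e--Wirtinger inequality together with Gagliardo--Nirenberg interpolation on $\mathbb{S}^{1}$, where the isoperimetric ratio $\mathscr{I}$ controls the multiplicative constants.

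Given this inequality, a continuity argument shows that if $\kosc(\Gamma_{0})<\varepsilon_{0}$ with $\varepsilon_{0}$ chosen so that $c_{2}\varepsilon_{0}^{\beta}<c_{1}$, then $\kosc(\Gamma_{t})<\varepsilon_{0}$ persists on $[0,T)$ and in fact decays; integrating the inequality in time produces a uniform $L^{2}$ bound on $\kappa_{\sigma^{p+1}}$. Bootstrapping this via higher-order energy estimates for $\lnablal{k}^{2}$, combined with further interpolation, yields uniform-in-time bounds on $\lnablal{k}$ for every $k\in\mathbb{N}$. These a-priori estimates, together with area conservation preventing the curve from collapsing, rule out singularity formation, so the maximal time of existence is $T=\infty$.

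For the convergence claim, the closed differential inequality actually gives exponential decay $\kosc(\Gamma_{t})\le C e^{-\lambda t}$, which the higher Sobolev estimates promote to smooth exponential convergence to a limit curve on which $\kappa\equiv\bar{\kappa}$ in the Minkowski sense. By Proposition~\ref{IsoperimetrixProposition1}, curves of constant Minkowski curvature are precisely the homothetic rescalings of $\tilde{\mathcal{I}}$, and area conservation pins down the scale so that the limit has enclosed area $\mathscr{A}(\Gamma_{0})$. The main obstacle I anticipate is handling the anisotropic weight $\mathscr{Q}$ during integration by parts: $\sigma$-derivatives and $\theta$-derivatives of $\kappa$ do not commute cleanly in the Minkowski setting, and carefully tracking all the $\mathscr{Q}$-factors is what forces the specific threshold $\exp(\varepsilon_{0}/8\mathscr{A}(\tilde{\mathcal{I}})^{2})$ appearing in \eqref{MainTheorem1,2}.
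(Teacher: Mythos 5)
Your overall architecture follows the same small-energy scheme as the paper (monotone $\mathscr{L}$ and $\mathscr{I}$, conserved $\mathscr{A}$, a differential inequality for $\kosc$, a blow-up criterion, then exponential convergence), but two steps as you describe them do not close. First, your key differential inequality for $\kosc$ omits the term that actually forces the second hypothesis in \eqref{MainTheorem1,2}. Expanding $\kappa=\kave+\bar{\kappa}$ in the cubic nonlinearity of $\frac{d}{dt}\kosc$ produces the contribution $2\bar{\kappa}^{2}\mathscr{L}\intmink{\kappa_{\sigma^{p}}^{2}}$, which is positive, of the same order as the good term, and \emph{cannot} be absorbed by smallness of $\kosc$. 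Since $\intmink{\kappa}=2\mathscr{A}(\tilde{\mathcal{I}})$ is conserved, $\bar{\kappa}=2\mathscr{A}(\tilde{\mathcal{I}})/\mathscr{L}$ and this term equals $-8\mathscr{A}(\tilde{\mathcal{I}})^{2}\frac{d}{dt}\ln\mathscr{L}$; the paper therefore propagates the modified quantity $\kosc+8\mathscr{A}(\tilde{\mathcal{I}})^{2}\ln\mathscr{L}$, obtaining $\kosc(t)\leq\kosc(0)+8\mathscr{A}(\tilde{\mathcal{I}})^{2}\ln\oo{\mathscr{L}(0)/\mathscr{L}(t)}\leq\kosc(0)+4\mathscr{A}(\tilde{\mathcal{I}})^{2}\ln\mathscr{I}(\Gamma_{0})$. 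This is exactly where the threshold $\exp\oo{\varepsilon_{0}/8\mathscr{A}(\tilde{\mathcal{I}})^{2}}$ comes from --- not, as you suggest, from bookkeeping of the weight $\mathscr{Q}$, which only enters through the coercivity constant $\mathscr{Q}_{\star}=\min\mathscr{Q}$ and the absorbable $c\oo{\kosc+\sqrt{\kosc}}$ error terms. As written, your inequality would make $\kosc$ monotone for small data and render the hypothesis on $\mathscr{I}(\Gamma_{0})$ superfluous; in fact $\kosc$ need not decay, it can grow by a total amount controlled by $\ln\mathscr{I}(\Gamma_{0})$, and the continuity argument must be run on the modified quantity.

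Second, the claim that the closed inequality ``actually gives exponential decay of $\kosc$'' is unsupported: even after absorption one only gets $\frac{d}{dt}\oo{\kosc+8\mathscr{A}(\tilde{\mathcal{I}})^{2}\ln\mathscr{L}}+\frac{\llll{\kappa_{\sigma^{p}}}_{2}^{2}}{\mathscr{L}}\kosc\leq0$, and the dissipation prefactor $\llll{\kappa_{\sigma^{p}}}_{2}^{2}/\mathscr{L}$ is not bounded below. The paper instead deduces $\kosc\rightarrow0$ from $\kosc\in L^{1}\oo{[0,\infty)}$ together with a uniform bound on $\norm{\frac{d}{dt}\kosc}$ --- which itself requires the separate a priori bound on $\llll{\kappa_{\sigma^{p}}}_{2}^{2}$ of Proposition \ref{LongTimeProp1} --- and then obtains exponential decay of $\intmink{\kappa_{\sigma^{m}}^{2}}$ from the compactness-based interpolation $\intmink{\kappa_{\sigma^{l}}^{2}}\leq c_{l}\mathscr{L}^{2}\kosc\intmink{\kappa_{\sigma^{l+1}}^{2}}$, valid once $\kosc$ is small. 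Some version of these intermediate steps is needed to upgrade your $L^{1}$-in-time information to genuine exponential convergence in every $C^{m}$ norm.
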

In the above theorem, $\mathscr{I}$ $\kosc$ refers to the anisotropic normalised oscillation of curvature,
defined by
\[
\kosc(\Gamma) = \mathscr{L}\int_\Gamma (\kappa-\bar{\kappa})^2\,d\sigma
\,,
\]
which is introduced in Section \ref{KoscSection}, and $\mathscr{I}$ is the anisotropic isoperimetric ratio (see \eqref{IsoperimetricRatio2}).
\\\\
Any solution to \eqref{APH} with initial data satisfying the hypotheses of
Theorem \ref{MainTheorem1} yields a global solution that is asymptotic to a
homothetic rescaling of the isoperimetrix, i.e., that is eventually strictly
convex.
In this context convexity is defined by the positivity of the Minkowski
curvature $\kappa$ (although by the convexity of the isoperimetrix it is easy
to see that $\kappa>0$ if and only if $\Gamma$ is convex in the classical
sense).
It is therefore natural to ask how long one must wait until a solution
arising from Theorem \ref{MainTheorem1} becomes strictly convex.
The proposition below answers this question.

\begin{prop}[Upper bound on the waiting time for uniform conveity]\label{WaitingTimeProp}
Suppose that $\Gamma:\mathbb{S}^{1}\times[0,T)\rightarrow\mathcal{M}^{2}$ satisfies the criteria of Theorem \ref{MainTheorem1}. Then 
\[
\mathcal{L}\left\{t\in[0,\infty):\kappa(\cdot,t)\ngtr0\right\}\leq\frac{8\,\mathscr{A}(\tilde{\mathcal{I}})^{p-1}\mathscr{A}(\Gamma_{0})^{p+1}}{(p+1)\pi^{2p}}\oo{\mathscr{I}(\gamma_{0})^{p+1}-1},
\]
where $\kappa(\cdot,t)\ngtr0$ is taken to mean there exists at least one $\sigma_{0}$ with $\kappa(\sigma_{0},t)\leq0$.
\end{prop}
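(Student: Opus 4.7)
The plan is to combine the gradient-flow structure, which forces monotonicity of length and hence of the isoperimetric ratio $\mathscr{I}$, with a uniform-in-time lower bound on $\intmink{(\kappa_{\sigma^{p}})^{2}}$ that is available precisely at those times where $\kappa$ fails to be strictly positive. Under \eqref{APH} the Minkowski area $\mathscr{A}$ is preserved and $\tfrac{d\mathscr{L}}{dt}=-\intmink{(\kappa_{\sigma^{p}})^{2}}\le 0$ is the $H^{-p}$ gradient-flow identity recorded in the introduction; hence $\mathscr{I}(t)=\mathscr{L}(t)^{2}/(4\mathscr{A}(\tilde{\mathcal{I}})\mathscr{A}(\Gamma_{0}))$ depends on $t$ only through $\mathscr{L}$, is monotone nonincreasing, and by Theorem \ref{MainTheorem1} satisfies $\mathscr{I}(t)\to 1$ as $t\to\infty$. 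Since $\mathscr{A}$ is conserved, the chain rule gives
\[
\frac{d}{dt}\mathscr{I}^{p+1}\;=\;\frac{2(p+1)\,\mathscr{L}^{2p+1}}{\oo{4\mathscr{A}(\tilde{\mathcal{I}})\mathscr{A}(\Gamma_{0})}^{p+1}}\,\frac{d\mathscr{L}}{dt}.
\]

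Next I would bound $\intmink{(\kappa_{\sigma^{p}})^{2}}$ from below on $E:=\{t\ge 0:\kappa(\cdot,t)\ngtr 0\}$. At any $t\in E$, $\kappa$ attains a nonpositive value; since the Minkowski Gauss--Bonnet-type identity yields $\bar{\kappa}=2\mathscr{A}(\tilde{\mathcal{I}})/\mathscr{L}>0$, the intermediate value theorem provides a zero of $\kappa$, and hence $\|\kappa-\bar{\kappa}\|_{\infty}\ge 2\mathscr{A}(\tilde{\mathcal{I}})/\mathscr{L}$ on $E$. I would then upgrade this pointwise bound by combining the elementary embedding $\|f\|_{\infty}^{2}\le 2\|f\|_{2}\|f^{\prime}\|_{2}$ for mean-zero $\mathscr{L}$-periodic $f$ (applied to $f=\kappa-\bar{\kappa}$) with $p-1$ applications of the Wirtinger inequality $\|g\|_{2}\le(\mathscr{L}/2\pi)\|g^{\prime}\|_{2}$ to the successive derivatives $\kappa_{\sigma^{j}}$, each of which has zero mean as the derivative of a periodic function. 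The chain gives
\[
\oo{\frac{2\mathscr{A}(\tilde{\mathcal{I}})}{\mathscr{L}}}^{2}\le\|\kappa-\bar{\kappa}\|_{\infty}^{2}\le C_{p}\,\mathscr{L}^{2p-1}\intmink{(\kappa_{\sigma^{p}})^{2}}
\]
with an explicit $C_{p}$, and rearranging yields the desired lower bound $\intmink{(\kappa_{\sigma^{p}})^{2}}\ge c_{p}\mathscr{A}(\tilde{\mathcal{I}})^{2}\mathscr{L}^{-(2p+1)}$ on $E$.

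Inserting this lower bound into the formula for $\tfrac{d}{dt}\mathscr{I}^{p+1}$ produces the crucial cancellation of all powers of $\mathscr{L}$ and yields the time-independent differential inequality
\[
\frac{d}{dt}\mathscr{I}^{p+1}\;\le\;-\,\frac{(p+1)\,\pi^{2p}}{8\,\mathscr{A}(\tilde{\mathcal{I}})^{p-1}\mathscr{A}(\Gamma_{0})^{p+1}}\qquad\text{on }E.
\]
Integrating over $E$ and using $\mathscr{I}^{p+1}(t)\ge 1$ together with $\mathscr{I}^{p+1}(t)\to 1$ delivers the claimed bound on $|E|$. The main difficulty here is not conceptual but one of bookkeeping: one must track that the Wirtinger factors $(\mathscr{L}/2\pi)^{2(p-1)}$, the pointwise lower bound $\bar{\kappa}^{2}$, and the $\oo{4\mathscr{A}(\tilde{\mathcal{I}})\mathscr{A}(\Gamma_{0})}^{p+1}$ denominator arising from $\mathscr{I}^{p+1}$ combine to produce exactly the stated coefficient $8/((p+1)\pi^{2p})$ and the exact power $\mathscr{A}(\tilde{\mathcal{I}})^{p-1}$.
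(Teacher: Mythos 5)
Your proposal is correct and follows essentially the same strategy as the paper: a Wirtinger-chain lower bound on the dissipation $\intmink{\kappa_{\sigma^{p}}^{2}}$ at non-convex times (exploiting $\bar{\kappa}=2\mathscr{A}(\tilde{\mathcal{I}})/\mathscr{L}>0$ together with the existence of a point where $\kappa\leq0$), fed into the monotonicity formula $\frac{d}{dt}\mathscr{L}=-\intmink{\kappa_{\sigma^{p}}^{2}}$ and integrated against the isoperimetric lower bound $\mathscr{I}\geq1$. The only differences are cosmetic: the paper argues by contradiction on an initial interval $[0,t_{0})$ and bounds $\intmink{\kappa^{2}}$ from below via Cauchy--Schwarz applied to $\intmink{\kappa}=2\mathscr{A}(\tilde{\mathcal{I}})$ rather than via your $L^{\infty}$ interpolation (your integration of the differential inequality directly over the measurable set $E$ is in fact the cleaner way to handle a bad set that need not be an interval), and your bookkeeping yields the constant $2\pi$ in place of the stated $8$ in the numerator, which is harmless since $2\pi<8$.
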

\end{section}

\begin{section}{Convex Bodies and Differential Geometry of the Minkowski Plane}\label{ConvexSection}
We introduce the fundamental concepts of convex body geometry before moving onto the Minkowski plane, the setting for our paper. To get a broader understanding of some of the finer details of the Minkowski plane and anisotropic vector spaces, the authors recommend reading the fantastic survey articles of Martini and Swanepoel \cite{martini2001geometry,martini2004geometry}.
\\\\
We begin with a real vector space $X$ and a proper subset $K\subset X$ containing the zero vector $\vec{0}$. The subset $K$ is assumed to possess the following properties:
\\
\begin{enumerate}
	\item [\textbf{(i)}] \label{Subset1} \emph{Convexity}: meaning that any convex combination of vectors in $\mathcal{U}$ is also contained in $\mathcal{U}$, and; \\
	\item [\textbf{(ii)}] \label{Subset2} \emph{Balanced}: meaning that $\alpha\,K\subseteq K$ for every $\norm{\alpha}\leq 1$.\\
	\item[]
\end{enumerate}
Such a subset $K$ is said to be \emph{absolutely convex}. If $K$ is absolutely convex, then we can define a corresponding \emph{Minkowskian functional} $p_{K}:X\rightarrow[0,\infty)$ by
\begin{equation}
p_{K}\oo{x}=\inf\left\{r>0:x\in r\,K\right\}.\label{MinkowskiFunctional}
\end{equation}
The properties \textbf{(i)} and \textbf{(ii)} prescribed to $K$ above allow us to ascertain that $p_{K}$ is \emph{subadditive}:
\begin{equation}
p_{K}\oo{x+y}\leq p_{K}\oo{x}+p_{K}\oo{y}\quad\forall x,y\in X\label{Subadditive}
\end{equation}
and \emph{homogeneous}:
\begin{equation}
p_{K}\oo{\alpha\,x}=|\alpha|\,p_{K}\oo{x}\quad\forall\alpha\in\mathbb{R}.\label{Homogeneous}
\end{equation}
Note that the properties \eqref{Subadditive} and \eqref{Homogeneous} imply that $p_{K}$ is a \emph{seminorm} for the vector space $X$.\\\\
As a simple example, consider the $n-$dimensional Euclidean vector space $X=\mathbb{R}^{n}$ and the closed ball centred at the origin of fixed radius $\rho>0$:
\[
K_{\rho}:=B_{\rho}\oo{0}:=\left\{x\in\mathbb{R}^{n}:|x|\leq \rho\right\}.
\] 
The set $K_{\rho}$ is obviously absolutely convex by the definition above. Moreover, for any $r>0$, one has 
\[
r\,K_{\rho}=B_{r\,\rho}\oo{0}=\left\{x\in\mathbb{R}^{n}:|x|\leq r\rho\right\},
\]
and so the associated Minkowski functional is easily calculable:
\[
p_{K_{\rho}}\oo{x}=\inf\left\{r>0:x\in r\,K_{\rho}\right\}=\inf\left\{r>0:x\in B_{r\,\rho}\oo{x}\right\}=\rho^{-1}|x|.
\]
Here $|\cdot|$ is the ordinary norm in $\mathbb{R}^{n}$. Therefore the Minkowski functional in this case simply scales a vector by a factor of $\rho^{-1}$. It is isotropic (meaning that vectors of the same Euclidean length map to the same value under $p_{K_{\rho}}$, independent of their direction). It is worth noting that if $\rho=1$ then $p_{K}$ simply gives the regular Euclidean vector length, $|\cdot|$. 
\\\\
Note that the ball centred at the origin is special in $\mathbb{R}^{n}$ in that it is invariant under all actions of $SO\oo{n}$, the special orthogonal group. This means that it is invariant under rotations, and therefore will induce a Minkowskian functional which is isotropic. For a generic absolutely convex body however this is certainly not the case, however, as you can quite clearly see by considering $K$ to be a non-spherical ellipsoid in $\mathbb{R}^{3}$ together with its interior. In this scenario a vector $x\in\mathbb{R}^{3}$ which is oriented in the direction of the longest semi-axis of $K$ attains a value of $p_{K}\oo{x}$ that is smaller than or equal to the value of $p_{K}$ evaluated at any proper rotation of $x$:
\[
p_{K}\oo{x}\leq p_{K}\oo{k\,\cdot x}\,\,\forall k\in SO\oo{3}.
\]
The Minkowskian functional in this case is \emph{anisotropic}, meaning that it is not invariant under rotations (that is, it is directionally dependent). This sets the scene for this part of the thesis. We now introduce the Minkowski plane $\mathcal{M}^{2}$, the anisotropic setting for our curve flow.
\\\\
We consider a convex, centrally symmetric domain $\mathcal{U}\subset\mathbb{R}^{2}$ with symmetry centre $\vec{0}$. We assume that $\partial\mathcal{U}$ is smooth with strictly positive Euclidean curvature. $\partial\mathcal{U}$ can be expressed as $\oo{r\oo{\theta}\cos\theta,r\oo{\theta}\sin\theta}$ with $\theta\in\left[0,2\pi\right)$ and $r>0$ and $r\oo{\theta+\pi}=r\oo{\theta}$.
For a vector $x\in\mathbb{R}^{2}$ with $x=|x|\oo{\cos\theta,\sin\theta}$ (where $\norm{\cdot}$ is the regular Euclidean norm) the \emph{Minkowski norm}, $l\oo{x}$, of $x$ is defined by
\begin{equation}
l\oo{x}=|x|/r\oo{\theta}.\label{MinkowskiNormDefn}
\end{equation}
Here $\theta=\theta\oo{x}$ is as defined earlier. One notes that because $\mathcal{U}$ is convex and centrally symmetric then it is automatically absolutely convex, and then the Minkowski norm $l$ satisfies the definition \eqref{MinkowskiFunctional} of the Minkowskian functional  corresponding to the body $K=\mathcal{U}$. Aptly, we then define the \emph{Minkowski plane}\footnote{One should be careful to not confuse this definition of the Minkowski plane with the other, perhaps more familiar notion  of the $2$- dimensional Minkowski spacetime, which is a $1+1$-dimensional Lorentzian manifold  which in local coordinates $\oo{t,x}$ is endowed with the metric $ds^{2}:=-dt^{2}+dx^{2}$.} $\oo{\mathcal{M}^{2},d}$ as the vector space $\mathcal{M}^{2}=\mathbb{R}^{2}$ equipped with the distance metric $d:\mathcal{M}^{2}\times\mathcal{M}^{2}\rightarrow\left[0,\infty\right)$ given by
\[
d\oo{x,y}=l\oo{x-y}.
\]
Hence for any $x\in\partial\mathcal{U}$ we have $d(x,\vec{0})=l\oo{x}=1$, and so we define $\partial\mathcal{U}$ to be the \emph{Minkowski unit circle}, or \emph{indicatrix} of $\mathcal{M}^{2}$. For example, if we are just working in $\mathbb{R}^{2}$ then our indicatrix $\partial\mathcal{U}$ is simply the Euclidean unit circle and the distance metric is the regular (isotropic) one defined by $d\oo{x,y}=\norm{x-y}$. Similarly, if $\partial\mathcal{U}$ is the Euclidean circle with radius $r$, then the corresponding distance metric is the one defined by $d\oo{x,y}=\norm{x-y}/r$. In this case the distance metric either isotropically enlarges or shrinks the length of vectors by a factor of $r$, depending upon whether $r<1$ or $r>1$, respectively.
\\\\
The \emph{polar dual} of $\mathcal{U}$, denoted $\mathcal{U}^{*}$ is given by
\[
\mathcal{U}^{*}:=\left\{f\in\oo{\mathcal{M}^{2}}^{*}:\norm{f\oo{x}}\leq 1\,\,\forall x\in\mathcal{U}\right\}\subset\oo{\mathcal{M}^{2}}^{*}.
\]
It is a simple exercise to show that if $\mathcal{U}$ is closed convex and contains the origin then $\mathcal{U}^{**}=\mathcal{U}$. This set is also a closed convex set (in the sense that convex combinations of linear functionals in $\oo{\partial\mathcal{U}}^{*}$ are also contained in the set). The boundary of the polar dual is given by
\[
\partial\mathcal{U}^{*}:=\left\{f\in\mathcal{U}^{*}:f\oo{x}=1\,\,\text{for some}\,\,x\in\partial\mathcal{U}\right\}.
\]

Recall that given a non-empty closed convex set $K\subset\mathbb{R}^{2}$, the \emph{support function} $h_{K}:\mathbb{R}^{2}\rightarrow\mathbb{R}$ of $K$ is given by
\[
h_{K}\oo{x}:=\sup\left\{\oo{x,k}:k\in K\right\},
\]
where $\oo{\cdot,\cdot}$ is the ordinary inner product in $\mathbb{R}^{2}$. If $K=\partial\mathcal{U}$ is parameterised by the angle function $\theta$ as before, then we define the \emph{polar radial support} function $h=h_{\mathcal{U}^{*}}$ as the support function of the polar dual of $\mathcal{U}$, $\mathcal{U}^{*}$. This function is also parameterised by $\theta$ and is in fact given by the reciprocal of the radial function: $h=r^{-1}$ \cite{Gage1}.
\\\\
Let $\Gamma:\mathbb{S}^{1}\rightarrow\mathcal{M}^{2}$ be a parametrised closed piecewise differentiable curve. Then we define the \emph{Minkowski length} of $\Gamma$ to be
\[
\mathscr{L}\oo{\Gamma}=\int_{\mathbb{S}^{1}}{l\oo{\Gamma_{u}}\,du}=\intmink{}.
\]
Here the Minkowski arc length element is given by
\begin{equation}
d\sigma\oo{u}=l\oo{\Gamma_{u}}du=\frac{\norm{\Gamma_{u}}}{r\oo{\Gamma_{u}}}du=\frac{ds\oo{u}}{r\oo{\Gamma_{u}}}.\label{MinkowskiArcLength}
\end{equation}
Alternatively, we write
\[
d\sigma\oo{s}=\frac{ds}{r\oo{\tau\oo{s}}}
\]
where $\tau=\Gamma_{s}$ is the Euclidean unit length tangent vector.
\\\\
Given our earlier parametrisation for $\partial\mathcal{U}$ we define the Minkowski tangent and Minkowski normal vector to a curve $\Gamma=\oo{x\oo{\theta},y\oo{\theta}}$ by
\begin{equation}
T\oo{\theta}=r\oo{\theta}\tau\oo{\theta}\quad\text{and}\quad N\oo{\theta}=-h_{\theta}\oo{\cos\theta,\sin\theta}+h\oo{-\sin\theta,\cos\theta},\label{MinkowskiTangentNormal}
\end{equation}
where $h$ is the \emph{polar radial support function}, $h=r^{-1}$. The derivation and reasoning behind these definitions are included at the beginning of the appendix. It should be noted that the angle $\theta$ refers to the angle that the regular Euclidean tangent to $\Gamma$ makes with the $x-$axis.
\\\\
The \emph{isoperimetrix} $\tilde{\mathcal{I}}$ is then defined by the parametrisation
\begin{equation}
\tilde{\mathcal{I}}=\left\{N\oo{\partial\mathcal{U}}\oo{\theta}:\theta\in\left[0,2\pi\right)\right\}
=\left\{-h_{\theta}\tau+hn:\theta\in\left[0,2\pi\right)\right\}.\label{IsoperimetrixDefinition}
\end{equation}
Qualitatively we have traced out the Minkowski normal vector $N$ as we vary along the indiactrix $\partial\mathcal{U}$.

\begin{prop}\label{IsoperimetrixProposition1}
For a Minkowski plane $\mathcal{M}^{2}$ with associated indicatrix $\partial\mathcal{U}$, a homothetic rescaling of $\tilde{\mathcal{I}}$ gives the minimum Minkowski boundary length of all convex sets with a given enclosed area.
\end{prop}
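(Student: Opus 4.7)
The plan is to reduce the statement to the classical Minkowski inequality for mixed areas in the plane. For a smooth, strictly convex closed curve $\Gamma$ with Euclidean support function $p=p_{\Gamma}$ and Euclidean radius of curvature $\rho=p+p_{\theta\theta}$, I would first rewrite the Minkowski length using $d\sigma=h(\theta)\,ds$ (from \eqref{MinkowskiArcLength} combined with $h=r^{-1}$) and $ds=\rho\,d\theta$ to obtain
\[
\mathscr{L}(\Gamma)=\int_{0}^{2\pi}h(\theta)\bigl(p+p_{\theta\theta}\bigr)\,d\theta=\int_{0}^{2\pi}p(\theta)\bigl(h+h_{\theta\theta}\bigr)\,d\theta,
\]
the second equality being two integrations by parts whose boundary contributions vanish by $2\pi$-periodicity.

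Next I would identify this quantity as twice the Euclidean mixed area of $\Gamma$ with the isoperimetrix. The key computation is that the Euclidean support function of $\tilde{\mathcal{I}}$ is precisely $h$: using the parametrization \eqref{IsoperimetrixDefinition} and the identities $\inner{\tau(\theta),n(\phi)}=\sin(\theta-\phi)$ and $\inner{n(\theta),n(\phi)}=\cos(\theta-\phi)$, one obtains
\[
\inner{-h_{\theta}\tau+hn,\;n(\phi)}=-h_{\theta}\sin(\theta-\phi)+h\cos(\theta-\phi),
\]
and differentiating in $\theta$ shows, using the positivity hypothesis $h+h_{\theta\theta}>0$ (equivalent to strict positivity of the Euclidean curvature of $\partial\mathcal{U}^{*}$), that this supremum is attained uniquely at $\theta=\phi$ and equals $h(\phi)$. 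Hence $h_{\tilde{\mathcal{I}}}=h$, and the formula above reads $\mathscr{L}(\Gamma)=2V(\Gamma,\tilde{\mathcal{I}})$ in standard mixed-area notation.

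Finally I would apply the planar Minkowski inequality $V(K,L)^{2}\geq\mathscr{A}(K)\mathscr{A}(L)$, with equality iff $K$ and $L$ are homothetic, to $K=\Gamma$ and $L=\tilde{\mathcal{I}}$. This yields the anisotropic isoperimetric inequality
\[
\mathscr{L}(\Gamma)^{2}\geq 4\,\mathscr{A}(\tilde{\mathcal{I}})\,\mathscr{A}(\Gamma),
\]
with equality precisely when $\Gamma$ is a homothetic rescaling of $\tilde{\mathcal{I}}$; fixing the enclosed area therefore identifies the minimiser of $\mathscr{L}$. The main obstacle is the Minkowski inequality itself together with its sharp equality case, which I would cite from convex body geometry rather than reprove, noting that it rests on Brunn-Minkowski. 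A more elementary alternative is to compute the Euler-Lagrange equation $h+h_{\theta\theta}=\lambda(p+p_{\theta\theta})$ for the constrained variational problem and solve it modulo the translational kernel $\{c_{1}\cos\theta+c_{2}\sin\theta\}$ of $p\mapsto p+p_{\theta\theta}$ to obtain $p=\lambda^{-1}h$; this isolates the critical point but still needs a second-variation or convexity argument to confirm it is a minimum. Extending from smooth strictly convex curves to arbitrary convex bodies is then a routine approximation argument.
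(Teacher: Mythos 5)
Your proposal is correct: it reconstructs the standard mixed-area argument that the paper simply cites (Chakerian) rather than reproves, namely writing $\mathscr{L}(\Gamma)=2V(\Gamma,\tilde{\mathcal{I}})$ after identifying $h$ as the Euclidean support function of the isoperimetrix, and then invoking Minkowski's first inequality $V(K,L)^{2}\geq\mathscr{A}(K)\mathscr{A}(L)$ with its homothety equality case. The computation identifying the support function of $\tilde{\mathcal{I}}$ is consistent with the paper's own calculation of $\mathscr{A}(\tilde{\mathcal{I}})=\tfrac12\int_{0}^{2\pi}h(h+h_{\theta\theta})\,d\theta$ in Proposition \ref{TotalCurvatureProp}, so no further changes are needed.
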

\begin{proof}
The result is standard. See, for example, \cite{chakerian1960isoperimetric}.
\end{proof}

Using \eqref{MinkowskiTangentNormal} as well as the chain rule and the identity $\theta_{s}=k$, we arrive at
\begin{equation}
\oo{
\begin{array}{c}
T\\
N
\end{array}
}_{\sigma}
=
\oo{
\begin{array}{cc}
0 & kh^{-3}\\
-k\oo{h+h_{\theta\theta}} & 0
\end{array}
}
\oo{
\begin{array}{c}
T\\
N
\end{array}
},\label{MinkowskiFrenet2}
\end{equation}
which is the anisotropic analogue of the standard Frenet–Serret formulas for planar curves (see, for example \cite{do1976differential}).\\\\

Let $\Gamma$ be a closed plane curve. Let $\theta$ be the angle between the
Minkowski tangent vector and the positive $x-$axis. That is to say,
$T=T\oo{\theta}$. It is relatively straightforward to check that the Euclidean
curvature of the isoperimetrix $\tilde{\mathcal{I}}$ at the point
$N\oo{\theta}$ is equal to
\begin{equation}
\hat{k}=\oo{h+h_{\theta\theta}}^{-1}.\label{EuclideanCurvature}
\end{equation}
By defining $T^{*},N^{*}$ to be the corresponding dual frames to $T$ and $N$
respectively (that is, $T^{*},N^{*}\in\oo{T\Gamma}^{*}$ satisfy
$T^{*}\oo{T}=N^{*}\oo{N}=1,T^{*}\oo{N}=N^{*}\oo{T}$ where $\oo{T\Gamma}^{*}$
denotes the cotangent bundle of $\Gamma$), one can show that the differential
of the Minkowski length functional, $dl$ can be expressed in a particularly
attractive way:
\[
dl=T^{*}.
\]
(For a calculation of this, see \cite{Gage1}).\\\\

It is then straightforward to show (see Remark \ref{Remark1}) that for a
one-parameter family of Minkowski immersions
$\Gamma:\mathbb{S}^{1}\times\left[0,T\right)\rightarrow\mathcal{M}^{2}$, the
evolution equation
\begin{equation}
\partial_{t}^{N}\Gamma=k\,\hat{k}^{-1}\,N=k\oo{h+h_{\theta\theta}}\,N=:\kappa\,N.\label{ACSF}\tag{ACSF}
\end{equation}
gives the flow of steepest descent for the Minkowski length functional (see Remark \ref{Remark1}). We call $\kappa=\kappa\oo{\sigma,t}$ the \emph{Minkowski curvature} associated to $\Gamma\oo{\sigma,t}$. Gage \cite{Gage1} (see also \cite{pozzi2012gradient} for another perspective) studied the motion
of a plane curve evolving with flow speed given by \eqref{ACSF}, the so-called
``anistropic curve-shortening flow''.  He proved that flows nomalised to
preserve area converge smoothly to a homothetic rescaling of
$\partial\mathcal{U}$ with enclosed area
$\mathscr{A}\oo{\Gamma_{0}}:=\mathscr{A}\oo{\Gamma\oo{\cdot,0}}$.  This is
clearly the Minkowski analogue of the regular Euclidean curve-shortening flow,
which has been studied quite thoroughly in the mathematical community (see
\cite{Abresch1,Altschuler1,Gage2,grayson1987heat}, among many others).
\\\\
In this paper we consider an anistropic polyharmonic curve flow of order $2\oo{p+1}$:
\begin{equation}
\partial_{t}\Gamma=\oo{-1}^{p}\kappa_{\sigma^{2p}}N.\label{APH}\tag{APH}
\end{equation} 
Here $\kappa_{\sigma^{2p}}=\frac{\partial^{2p}\kappa}{\partial \sigma^{2p}}$ refers to the $2p^{th}$ derivative of $\kappa$ with respect to $\sigma$. We will henceforth refer to a one-parameter family of closed curves evolving via \eqref{APH} as a \emph{$2\oo{p+1}$-anisotropic polyharmonic curve flow}, and it naturally generalises its lower-order Euclidean counterparts.
\\\\
We need an anistropic analogue to the Euclidean isoperimetric ratio $L^{2}/4\pi
A$, where $L,A$ are the Euclidean length and enclosed area, respectively.
Amongst closed curves in the Euclidean plane (so
$\mathcal{M}^{2}=\mathbb{R}^{2}$), this ratio is minimised by circles, in which
case it equals $1$. It turns out that for any closed curve immersed in the
Minkowski plane $\mathcal{M}^{2}$,
$\Gamma:\mathbb{S}^{1}\rightarrow\mathcal{M}^{2}$ with positive enclosed area,
the following inequality holds:
\begin{equation}
\mathscr{L}^{2}\oo{\Gamma}-2\mathscr{A}\oo{\Gamma}\intmink{\kappa}\geq0,\label{IsoperimetricRatio1}
\end{equation}
with equality if and only if $\Gamma$ is a homothetic rescaling of the isoperimetrix $\tilde{\mathcal{I}}$. Noting that $\intmink{\kappa}=2\,\mathscr{A}(\tilde{\mathcal{I}})$ (see Proposition \ref{TotalCurvatureProp}), the ratio
\begin{equation}
\mathscr{I}\oo{\Gamma}:=\frac{\mathscr{L}^{2}\oo{\Gamma}}{4\mathscr{A}\oo{\Gamma}\mathscr{A}(\tilde{\mathcal{I}})}\label{IsoperimetricRatio2}
\end{equation}
is called the \emph{anisotropic isoperimetric ratio} associated to $\mathcal{M}^{2}$. This ratio is always greater than or equal to $1$, with equality if an only if $\Gamma$ is a homothetic rescaling of the isoperimetrix $\tilde{\mathcal{I}}$ (hence its name). In this paper we will often refer to $\mathscr{I}$ simply as the ``isoperimetric ratio'' for short, since it is in fact equal to its Euclidean counterpart in the case $\mathcal{M}^{2}=\mathbb{R}^{2}$.\\\\

Throughout this paper we will frequently use what is referred to as the \emph{$P-$style notation} for brevity. For a function $\varphi$ defined on the curve $\Gamma$ we use the notation
\[
P_{i}^{j}(\varphi):=\sum_{r_{1}+\dots+r_{j}=i}c\,\partial_{\sigma}^{r_{1}}\varphi\cdot\partial_{\sigma}^{r_{2}}\varphi\dots\partial_{\sigma}^{r_{j}}\varphi
\]
where the constant $c$ may vary from one term in the summation to another. Sometimes we are also interested in the highest derivative that appears in the summation in question. In this case we write
\[
P_{i}^{j,k}(\varphi):=\sum_{r_{1}+\dots+r_{j}=i,\,r_{l}\leq k}c\,\partial_{\sigma}^{r_{1}}\varphi\cdot\partial_{\sigma}^{r_{2}}\varphi\dots\partial_{\sigma}^{r_{j}}\varphi.
\]
Note that all the derivatives of $\varphi$ in our summation are less than or equal to $k$. This will be especially important when we wish to apply the inequalities from Lemma \ref{AppendixLemma4} and Lemma \ref{AppendixLemma5} later on.
\\\\

The paper is organised as follows.
In Section 2 we calculate evolution equations of various integrals.
Section 3 is concerned with the anisotropic oscillation of curvature.
We prove an estimate for $\kosc(\Gamma)$ that allows us to give a
charactisation of finite-time blowup.
The characterisation is blowup of the $L^2$-anisotropic norm of curvature, with
rate dependent on the order of the flow.
Section 4 considers the global behaviour of the flow, beginning with an
observation that the characterisation of finite-time blowup allows us to
conclude global existence if the anisotropic oscillation of curvature is initially 
smaller than an explicit constant, as in this case our estimates uniformly
control the $L^2$-anisotropic norm of curvature along the flow.

\section{Evolution equations}

\begin{lem}\label{Lemma1}
Suppose that $\Gamma:\mathbb{S}^{1}\times\left[0,T\right)\rightarrow\mathcal{M}^{2}$ solves \eqref{APH}, and that $f:\mathbb{S}^{1}\times\left[0,T\right)\rightarrow\mathbb{R}$ is a periodic function with the same period as $\Gamma$. Then
\[
\frac{d}{dt}\intmink{f}=\intmink{f_{t}}+\intmink{f\partial_{t}}=\intmink{f_{t}+\oo{-1}^{p+1}f\cdot\kappa\cdot\kappa_{\sigma^{2p}}}.
\]
\end{lem}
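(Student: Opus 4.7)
My plan is to reduce the stated identity to a single infinitesimal calculation for the Minkowski arc length measure, and then carry that calculation out using the identity $dl = T^{*}$ recorded earlier in Section 2.

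Writing the Minkowski arc length element as $d\sigma = l(\Gamma_{u})\,du$ on the time-independent parameter domain $\mathbb{S}^{1}$, the product rule immediately gives
\[
\frac{d}{dt}\intmink{f} \;=\; \intmink{f_{t}} + \int_{\mathbb{S}^{1}} f\,\partial_{t}l(\Gamma_{u})\,du,
\]
so the lemma is equivalent to the Minkowski first-variation identity
\[
\partial_{t}l(\Gamma_{u}) \;=\; -\kappa F\,l(\Gamma_{u}),
\]
where $F := (-1)^{p}\kappa_{\sigma^{2p}}$ is the normal speed in \eqref{APH}; substituting $F$ then reconstitutes the factor $(-1)^{p+1}\kappa\cdot\kappa_{\sigma^{2p}}$ appearing in the claim. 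This is simply the anisotropic analogue of the familiar Euclidean identity $\partial_{t}(ds) = -kF\,ds$.

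For the infinitesimal identity I would use $dl = T^{*}$, together with $T^{*}(T)=1$ and $T^{*}(N)=0$. Since $u$ is fixed in $t$,
\[
\partial_{t} l(\Gamma_{u}) \;=\; (dl)_{\Gamma_{u}}(\partial_{t}\Gamma_{u}) \;=\; T^{*}(\partial_{t}\Gamma_{u}).
\]
Next I would differentiate the flow equation in $u$ to write $\partial_{t}\Gamma_{u} = F_{u}N + F\,N_{u}$. The $F_{u}N$ term is annihilated by $T^{*}$, which is the algebraic shadow of the fact that a tangential reparametrisation cannot change arc length. For the remaining piece, converting to arc length via $N_{u} = l(\Gamma_{u})\,N_{\sigma}$ and invoking the Minkowski Frenet relation \eqref{MinkowskiFrenet2} together with $\kappa = k(h+h_{\theta\theta})$ yields $N_{\sigma} = -\kappa\,T$, so $T^{*}(N_{u}) = -\kappa\,l(\Gamma_{u})$. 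Assembling these two pieces gives the desired infinitesimal identity and hence the lemma.

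I do not anticipate a genuine obstacle here, because all the interplay between $T$, $N$, $h$ and $r$ is already packaged into $dl = T^{*}$ and \eqref{MinkowskiFrenet2}. The only subtlety that could arise is if one wished to avoid citing $dl = T^{*}$, in which case I would fall back on the direct calculation: write $l(\Gamma_{u}) = \norm{\Gamma_{u}}/r(\theta)$, compute $\partial_{t}\norm{\Gamma_{u}}$ and $\partial_{t}\theta$ by projecting $\partial_{t}\Gamma_{u}$ onto the Euclidean tangent $\tau$ and normal $\nu$, and use the decomposition $N = -h_{\theta}\tau + h\nu$ from \eqref{MinkowskiTangentNormal}. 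In that route the two $F_{u}$-contributions cancel exactly via $h_{\theta} = -r_{\theta}/r^{2}$, and keeping track of that cancellation is the one step that requires care.
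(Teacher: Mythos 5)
Your argument is correct, but it takes a genuinely different route from the paper. The paper proves the lemma by brute force: it computes $\partial_{t}\norm{\Gamma_{u}}$ and $\partial_{t}r\oo{\theta}$ separately (each producing a term proportional to $h\cdot h_{\theta}\kappa_{\sigma^{2p+1}}$), and then observes that these terms cancel exactly when the two pieces are combined into $\partial_{t}d\sigma=\partial_{t}\oo{ds/r}$, leaving $\partial_{t}d\sigma=\oo{-1}^{p+1}\kappa\cdot\kappa_{\sigma^{2p}}\,d\sigma$. You instead package the entire cancellation into the cited identity $dl=T^{*}$: writing $\partial_{t}\Gamma_{u}=F_{u}N+F\,N_{u}$ with $F=\oo{-1}^{p}\kappa_{\sigma^{2p}}$, the $F_{u}N$ contribution dies because $T^{*}\oo{N}=0$ (this is exactly the cancellation the paper verifies by hand, and it is legitimate since $N$ is tangent to the level sets of $l$), while the Frenet relation \eqref{MinkowskiFrenet2} gives $T^{*}\oo{N_{\sigma}}=-\kappa$, yielding $\partial_{t}l\oo{\Gamma_{u}}=-\kappa F\,l\oo{\Gamma_{u}}$ and hence the same evolution of $d\sigma$. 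Your version is shorter and more conceptual, and it makes transparent why only the normal speed (and not its derivative) enters the first variation of length; the trade-off is that it leans on the unproved external identity $dl=T^{*}$ from \cite{Gage1}, whereas the paper's computation is self-contained and, as a by-product, also records the evolution \eqref{EuclideanArclengthEvolution} of the Euclidean element $ds$ and of $r\oo{\theta}$, both of which are reused later (e.g.\ in Corollary \ref{Corollary1} and Lemma \ref{Lemma3}). Your fallback direct computation is essentially the paper's proof.
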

\begin{proof}
We must first calculate the time derivaties of the Euclidean and Minkowski length elements, $ds$ and $d\sigma$ respectively. Firstly, 
\begin{align}
\partial_{t}\norm{\Gamma_{u}}^{2}&=2(\Gamma_{u},\partial_{u}\Gamma_{t})\\
&=2\oo{-1}^{p}(\Gamma_{u},\partial_{u}\oo{\kappa_{\sigma^{2p}}N})\nonumber\\
&=2\oo{-1}^{p}h\norm{\Gamma_{u}}^{2}(\tau,\partial_{\sigma}\oo{\kappa_{\sigma^{2p}}N})\\
&=2\oo{-1}^{p}h\norm{\Gamma_{u}}^{2}(\tau,\kappa_{\sigma^{2p+1}}N-\kappa\cdot\kappa_{\sigma^{2p}}T)\nonumber\\
&=2\oo{-1}^{p}h\norm{\Gamma_{u}}^{2}(\tau,\kappa_{\sigma^{2p+1}}\oo{-h_{\theta}\tau+hn}-r\kappa\cdot\kappa_{\sigma^{2p}}\tau)\nonumber\\
&=2\oo{-1}^{p+1}\norm{\Gamma_{u}}^{2}\oo{h\cdot h_{\theta}\kappa_{\sigma^{2p+1}}+\kappa\cdot\kappa_{\sigma^{2p}}}.\nonumber
\end{align}
It follows immediately that
\begin{equation}
\partial_{t}ds=\partial_{t}\oo{\norm{\Gamma_{u}}du}=\oo{-1}^{p+1}\oo{h\cdot h_{\theta}\kappa_{\sigma^{2p+1}}+\kappa\cdot\kappa_{\sigma^{2p}}}\,ds.\label{EuclideanArclengthEvolution}
\end{equation}
Next, by using the chain rule, it is relatively straightforward to calculate
\begin{align}
\partial_{t}r\oo{\theta}&=\partial_{t}r\oo{\tan^{-1}(y_{u}/x_{u})}\\
&=\frac{\partial r\oo{\tan^{-1}(y_{u}/x_{u})}}{\partial \oo{\tan^{-1}(y_{u}/x_{u})}}\frac{\partial \oo{\tan^{-1}(y_{u}/x_{u})}}{\partial(y_{u}/x_{u})}\frac{\partial(y_{u}/x_{u})}{\partial t}\nonumber\\
&=\frac{r_{\theta}}{\norm{\Gamma_{u}}^{2}}(\oo{-y_{u},x_{u}},\partial_{u}\Gamma_{t})\\
&=r_{\theta}(n,\partial_{s}\Gamma_{t})\\
&=\oo{-1}^{p}hr_{\theta}(n,\partial_{\sigma}\oo{\kappa_{\sigma^{2p}}N})\nonumber\\
&=\oo{-1}^{p-1}h_{\theta}\kappa_{\sigma^{2p+1}}.\label{Lemma1,1}
\end{align}
Combining \eqref{EuclideanArclengthEvolution} and \eqref{Lemma1,1} allows us to calculate the evolution of Minkowski length element:
\begin{align}
\partial_{t}d\sigma&=-r^{-2}\oo{\oo{-1}^{p-1}h_{\theta}\kappa_{\sigma^{2p+1}}}\,ds+r^{-1}\oo{-1}^{p+1}\oo{h\cdot h_{\theta}\kappa_{\sigma^{2p+1}}+\kappa\cdot\kappa_{\sigma^{2p}}}\,ds\nonumber\\
&=\oo{-1}^{p+1}r^{-1}\kappa\cdot\kappa_{\sigma^{2p}}\,ds\\
&=\oo{-1}^{p+1}\kappa\cdot\kappa_{\sigma^{2p}}\,d\sigma.\label{MinkowskiArclengthEvolution}
\end{align}
It follows that if $f:\mathbb{S}^{1}\times\left[0,T\right)\rightarrow\mathbb{R}$ follows the hypothesis of the lemma, then
\[
\frac{d}{dt}\intmink{f}=\intmink{f_{t}}+\intmink{f\partial_{t}}=\intmink{f_{t}+\oo{-1}^{p+1}f\cdot\kappa\cdot\kappa_{\sigma^{2p}}},
\]
which is the desired result.
\end{proof}
\begin{cor}\label{Corollary1}
Suppose that $\Gamma:\mathbb{S}^{1}\times\left[0,T\right)\rightarrow\mathcal{M}^{2}$ solves \eqref{APH}. Then
\[
\frac{d}{dt}\mathscr{L}\oo{\Gamma}=-\intmink{\kappa_{\sigma^{p}}^{2}}\leq0\,\,\text{and}\,\,\frac{d}{dt}\mathscr{A}\oo{\Gamma}=0.
\]
Moreover,
\[
\frac{d}{dt}\intmink{\kappa}=0\,\,\text{nd}\,\,\frac{d}{dt}\overline{\kappa}\geq0,
\]
where $\overline{f}$ refers to the average of a function $f$ over $\gamma$:
\[
\overline{f}:=\frac{1}{\mathscr{L}}\intmink{f}.
\]
As a result, the isoperimetric ratio $\mathscr{I}$ decreases monotonically along the flow, with
\[
\mathscr{I}(t)\leq\mathscr{I}(0)\exp\oo{-\frac{2\int_{0}^{t}{||\kappa_{\sigma^{p}}||_{2}^{2}\,d\tau}}{\mathscr{L}(0)}}.
\]
\end{cor}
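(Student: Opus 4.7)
The plan is to derive each identity directly from Lemma~\ref{Lemma1} combined with integration by parts, exploiting that $\Gamma(\cdot,t)$ is closed so that all boundary terms in $\sigma$ vanish by periodicity. Applying Lemma~\ref{Lemma1} with $f\equiv 1$ gives $\frac{d}{dt}\mathscr{L}=(-1)^{p+1}\intmink{\kappa\,\kappa_{\sigma^{2p}}}$. Integrating by parts $p$ times produces a factor $(-1)^p$, so $\frac{d}{dt}\mathscr{L}=-\intmink{\kappa_{\sigma^{p}}^{2}}\leq 0$, as required.

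For the enclosed area, I would use the Euclidean first-variation formula $\frac{d}{dt}\mathscr{A}=\int_{\Gamma}\langle \partial_t\Gamma, n\rangle\,ds$ with $n$ the Euclidean normal. Writing $N=-h_{\theta}\tau+hn$ (from \eqref{MinkowskiTangentNormal}), the Euclidean normal component of the velocity is $(-1)^{p}h\,\kappa_{\sigma^{2p}}$. Since $ds=r\,d\sigma$ and $h=r^{-1}$, the factor $h$ cancels precisely and
\[
\frac{d}{dt}\mathscr{A}=(-1)^{p}\intmink{\kappa_{\sigma^{2p}}}=0,
\]
because $\kappa_{\sigma^{2p}}=\partial_{\sigma}(\kappa_{\sigma^{2p-1}})$ is an exact $\sigma$-derivative on a closed curve. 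For the total Minkowski curvature, I would invoke Proposition~\ref{TotalCurvatureProp}, which states $\intmink{\kappa}=2\mathscr{A}(\tilde{\mathcal{I}})$; this depends only on the ambient Minkowski plane, not on $\Gamma$, so its time derivative vanishes. Consequently $\overline{\kappa}=2\mathscr{A}(\tilde{\mathcal{I}})/\mathscr{L}$, which is non-decreasing because $\mathscr{L}$ is non-increasing.

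For the differential inequality on $\mathscr{I}$, since $\mathscr{A}$ and $\mathscr{A}(\tilde{\mathcal{I}})$ are constant along the flow, I would take a logarithmic derivative:
\[
\frac{d}{dt}\log\mathscr{I}=\frac{2}{\mathscr{L}}\frac{d\mathscr{L}}{dt}=-\frac{2}{\mathscr{L}}\llll{\kappa_{\sigma^{p}}}_{2}^{2}\leq -\frac{2}{\mathscr{L}(0)}\llll{\kappa_{\sigma^{p}}}_{2}^{2},
\]
using the first identity together with the bound $\mathscr{L}(t)\leq\mathscr{L}(0)$. Integrating in $t$ and exponentiating yields the stated estimate.

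None of the steps presents a serious obstacle; the only point requiring care is the area computation, where one must translate between the Minkowski normal frame and the Euclidean one so as to recognise the exact cancellation of $h$ against $r$ that reduces the integrand to a perfect $\sigma$-derivative. Once that observation is made, the remaining identities are immediate consequences of monotonicity of $\mathscr{L}$ together with the constancy of $\mathscr{A}$, $\mathscr{A}(\tilde{\mathcal{I}})$ and $\intmink{\kappa}$.
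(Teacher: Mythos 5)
Your proposal is correct, and for two of the four claims it takes a genuinely shorter route than the paper. The length identity and the Gronwall argument for $\mathscr{I}$ coincide with the paper's proof. For the area, the paper does not invoke the first-variation formula as a black box: it recomputes it from scratch, starting from $\mathscr{A}=-\frac{1}{2}\intcurve{(\Gamma,n)}$, deriving the commutator $[\partial_t,\partial_s]$, the evolution of $\tau$ and $n$, and then integrating by parts twice before everything collapses to $\oo{-1}^{p+1}\intmink{\kappa_{\sigma^{2p}}}=0$. Your observation that the Euclidean normal speed is $\oo{-1}^{p}h\kappa_{\sigma^{2p}}$ and that $h\,ds=d\sigma$ turns the integrand into an exact $\sigma$-derivative is exactly the cancellation hiding inside that computation, and it is a legitimate shortcut (valid for $p\geq1$, which is the standing assumption). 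For $\frac{d}{dt}\intmink{\kappa}=0$, the paper instead computes the full evolution equation $\partial_{t}\kappa=\oo{-1}^{p}\oo{\mathscr{Q}\kappa_{\sigma^{2p+1}}}_{\sigma}+\oo{-1}^{p}\kappa^{2}\kappa_{\sigma^{2p}}$ and feeds it into Lemma \ref{Lemma1}; you instead invoke Proposition \ref{TotalCurvatureProp}. Two caveats on that: the proposition is stated for \emph{simple} closed curves, so strictly you are using that the turning number is an integer-valued homotopy invariant and hence constant along the flow (worth saying explicitly); and the evolution equation for $\kappa$ that you avoid computing is needed anyway in Proposition \ref{Proposition1}, so the paper's detour is not wasted effort in the larger scheme. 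What your approach buys is brevity and conceptual clarity for the Corollary itself; what the paper's buys is the explicit formula for $\partial_t\kappa$ and a proof of conservation of $\intmink{\kappa}$ that makes no appeal to embeddedness.
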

\begin{proof}
Applying Lemma \ref{Lemma1} with $f\equiv1$ and integrating by parts $p$ times gives the first claim of the corollary immediately:
\[
\frac{d}{dt}\mathscr{L}=\oo{-1}^{p+1}\intmink{\kappa\cdot\kappa_{\sigma^{2p}}}=-\intmink{\kappa_{\sigma^{p}}^{2}}\oo{\leq0}.
\]
For the second claim of the corollary, we employ equation \eqref{EuclideanArclengthEvolution}:
\begin{equation}
\frac{d}{dt}\intcurve{(\Gamma,n)}=\intcurve{\partial_{t}(\Gamma,n)+\oo{-1}^{p+1}(\Gamma,n)\oo{h\cdot h_{\theta}\kappa_{\sigma^{2p+1}}+\kappa\cdot\kappa_{\sigma^{2p}}}}.\label{Corollary1,1}
\end{equation}
All that is left is to calculate $n_{t}$. To do so, we will need to first calculate the commutator $\cc{\partial_{t},\partial_{s}}$. Some of the work from Lemma \ref{Lemma1} will assist us along the way. We have
\begin{align}
\partial_{ts}&=\partial_{t}\oo{\norm{\Gamma_{u}}^{-1}\partial_{u}}=-\norm{\Gamma_{u}}^{-2}\partial_{t}\norm{\Gamma_{u}}\partial_{u}+\partial_{st}\nonumber\\
&=\oo{-1}^{p}\norm{\Gamma_{u}}^{2}\oo{h\cdot h_{\theta}\kappa_{\sigma^{2p+1}}+\kappa\cdot\kappa_{\sigma^{2p}}}\norm{\Gamma_{u}}\,\partial_{u}+\partial_{st}\nonumber\\
&=\partial_{st}+\oo{-1}^{p}\oo{h\cdot h_{\theta}\kappa_{\sigma^{2p+1}}+\kappa\cdot\kappa_{\sigma^{2p}}}\partial_{s}.\label{Commutatorst}
\end{align}
Next, note that $\tau=\Gamma_{s}$, and so the identity \eqref{Commutatorst} implies that
\begin{align}
\partial_{t}\tau&=\partial_{st}\Gamma+\oo{-1}^{p}\oo{h\cdot h_{\theta}\kappa_{\sigma^{2p+1}}+\kappa\cdot\kappa_{\sigma^{2p}}}\partial_{s}\Gamma\nonumber\\
&=\oo{-1}^{p}\partial_{s}\oo{\kappa_{\sigma^{2p}}N}+\oo{-1}^{p}\oo{h\cdot h_{\theta}\kappa_{\sigma^{2p+1}}+\kappa\cdot\kappa_{\sigma^{2p}}}\tau\nonumber\\
&=\oo{-1}^{p}h\oo{\kappa_{\sigma^{2p+1}}N-\kappa\cdot\kappa_{\sigma^{2p}}T}+\oo{-1}^{p}\oo{h\cdot h_{\theta}\kappa_{\sigma^{2p+1}}+\kappa\cdot\kappa_{\sigma^{2p}}}\tau\nonumber\\
&=\oo{-1}^{p}h\kappa_{\sigma^{2p+1}}\oo{hn-h_{\theta}\tau}+\oo{-1}^{p+1}\kappa\cdot\kappa_{\sigma^{2p}}\tau+\oo{-1}^{p}\oo{h\cdot h_{\theta}\kappa_{\sigma^{2p+1}}+\kappa\cdot\kappa_{\sigma^{2p}}}\tau\nonumber\\
&=\oo{-1}^{p}h^{2}\kappa_{\sigma^{2p+1}}n.\label{Corollary1,3}
\end{align}
Next, the identity $\norm{n}^{2}=1$ implies that $\partial_{t}n\perp n$. Hence from \eqref{Corollary1,3} we have
\begin{equation}
\partial_{t}n=(\partial_{t}n,\tau)\tau=-(n,\partial_{t}\tau)\tau=\oo{-1}^{p+1}h^{2}\kappa_{\sigma^{2p+1}}\tau\label{Corollary1,4}
\end{equation}
Hence
\begin{multline}
\partial_{t}(\Gamma,n)=\oo{-1}^{p}(\kappa_{\sigma^{2p}}N,n)+\oo{-1}^{p+1}h^{2}\kappa_{\sigma^{2p+1}}(\Gamma,\tau)\\
=\oo{-1}^{p}h\kappa_{\sigma^{2p}}+\oo{-1}^{p+1}h^{2}\kappa_{\sigma^{2p+1}}(\Gamma,\tau).
\end{multline}
Substituting this back into \eqref{Corollary1,1} gives
\begin{multline}
\frac{d}{dt}\intcurve{(\Gamma,n)}=\oo{-1}^{p}\intmink{\kappa_{\sigma^{2p}}}\\
+\oo{-1}^{p+1}\intcurve{h^{2}\kappa_{\sigma^{2p+1}}(\Gamma,\tau)}+\oo{-1}^{p+1}\intcurve{(\Gamma,n)\oo{h\cdot h_{\theta}\kappa_{\sigma^{2p+1}}+\kappa\cdot\kappa_{\sigma^{2p}}}}.\label{Corollary1,5}
\end{multline}
For the second last term in \eqref{Corollary1,5} we apply integration by parts once:
\begin{align}
&\oo{-1}^{p+1}\intcurve{(\Gamma,n)h\cdot h_{\theta}\kappa_{\sigma^{2p+1}}}\nonumber\\
&=\oo{-1}^{p+1}\intcurve{(\Gamma,n)h_{\theta}\kappa_{\sigma^{2p}s}}\nonumber\\
&=\oo{-1}^{p}\intcurve{\kappa_{\sigma^{2p}}\oo{(\Gamma_{s},n)h_{\theta}+(\Gamma,n_{s})h_{\theta}(\Gamma,n)h_{\theta s}}}\nonumber\\
&=\oo{-1}^{p+1}\intcurve{k\cdot\kappa_{\sigma^{2p}}h_{\theta}(\Gamma,\tau)}+\oo{-1}^{p}\intcurve{k\cdot h_{\theta\theta}\kappa_{\sigma^{2p}}(\Gamma,n)}.\label{Corollary1,6}
\end{align}
Here we have also used the identity $\partial_{s}=k\partial_{\theta}$ and the fact that $\partial_{s}\Gamma=\tau\perp n$. Simiarly, for the third last term in \eqref{Corollary1,5} we use integration by parts once more:
\begin{align}
&\oo{-1}^{p+1}\intcurve{h^{2}\kappa_{\sigma^{2p+1}}(\Gamma,\tau)}\nonumber\\
&=\oo{-1}^{p+1}\intcurve{h\kappa_{\sigma^{2p}s}(\Gamma,\tau)}\nonumber\\
&=\oo{-1}^{p}\intcurve{\kappa_{\sigma^{2p}}\oo{h_{s}(\Gamma,\tau)+h(\Gamma_{s},\tau)+h(\Gamma,\tau_{s})}}\nonumber\\
&=\oo{-1}^{p}\intcurve{k\cdot\kappa_{\sigma^{2p}}h_{\theta}(\Gamma,\tau)}+\oo{-1}^{p}\intmink{\kappa_{\sigma^{2p}}}\nonumber\\
&+\oo{-1}^{p}\intcurve{k\cdot h\kappa_{\sigma^{2p}}(\Gamma,n)\cdot}.\label{Corollary1,7}
\end{align}
Combining \eqref{Corollary1,6},\eqref{Corollary1,7} and substituting back into \eqref{Corollary1,5} gives
\begin{align}
\frac{d}{dt}\mathscr{A}&=-\frac{1}{2}\frac{d}{dt}\intcurve{(\Gamma,n)}\nonumber\\
&=\oo{-1}^{p+1}\intmink{\kappa_{\sigma^{2p}}}+\frac{1}{2}\oo{-1}^{p+1}\intcurve{k\oo{h+h_{\theta\theta}}\kappa_{\sigma^{2p}}(\Gamma,n)}\nonumber\\
&\quad+\frac{1}{2}\oo{-1}^{p}\intcurve{\kappa\cdot\kappa_{\sigma^{2p}}(\Gamma,n)}\nonumber\\
&=\oo{-1}^{p+1}\intmink{\kappa_{\sigma^{2p}}}=0.\nonumber
\end{align}
Here we have used $\kappa=k\oo{h+h_{\theta\theta}}$. The last step follows from the divergence theorem because $\Gamma\oo{\cdot,t}$ is closed. This establishes the second claim of the Corollary. 
\\\\
We first calculate the evolution of the Euclidean curvature $k$. This is relatively simple:
\begin{align}
\partial_{t}k&=\partial_{t}(\gamma_{ss},n)=(\partial_{ts}\tau,n)+(\tau_{s},\partial_{t}n)\nonumber\\
&=(\partial_{ts}\tau,n)=(\partial_{st}\tau+\oo{-1}^{p}\oo{h\cdot h_{\theta}\kappa_{\sigma^{2p+1}}+\kappa\cdot\kappa_{\sigma^{2p}}}\tau_{s},n)\nonumber\\
&=(\partial_{s}\oo{\oo{-1}^{p}h^{2}\kappa_{\sigma^{2p+1}}n},n)+\oo{-1}^{p}k\oo{h\cdot h_{\theta}\kappa_{\sigma^{2p+1}}+\kappa\cdot\kappa_{\sigma^{2p}}}\nonumber\\
&=\oo{-1}^{p}\oo{2kh\cdot h_{\theta}\kappa_{\sigma^{2p+1}}+h^{3}\kappa_{\sigma^{2p+2}}}+\oo{-1}^{p}k\oo{h\cdot h_{\theta}\kappa_{\sigma^{2p+1}}+\kappa\cdot\kappa_{\sigma^{2p}}}\nonumber\\
&=\oo{-1}^{p}\oo{h^{3}\kappa_{\sigma^{2p+2}}+3kh\cdot h_{\theta}\kappa_{\sigma^{2p+1}}+k\kappa\cdot\kappa_{\sigma^{2p}}}.\label{Lemma2,1}
\end{align}
Here we have used $\tau_{s}\perp\partial_{t}n$. Next we need to calculate the evolution of $h,h_{\theta\theta}$. This turns out to be relatively straightforward. For any $m\in\mathbb{Z}_{0}$ a simple calculation gives
\begin{align}
\partial_{t}h_{\theta^{m}}&=\frac{1}{\norm{\Gamma_{u}}}(\tau,\partial_{t}\oo{y_{u},-x_{u}})h_{\theta^{m+1}}=-(\tau,n_{t})h_{\theta^{m+1}}\nonumber\\
&=\oo{-1}^{p}h^{2}h_{\theta^{m+1}}\kappa_{\sigma^{2p+1}}.\label{Lemma2,1,2}
\end{align}
Hence
\begin{align}
\partial_{t}\oo{h+h_{\theta\theta}}&=\oo{-1}^{p}h^{2}\oo{h+h_{\theta\theta}}_{\theta}\kappa_{\sigma^{2p+1}}\nonumber\\
&=\oo{-1}^{p}k^{-1}h^{3}\oo{h+h_{\theta\theta}}_{\sigma}\kappa_{\sigma^{2p+1}}.\label{Lemma2,2}
\end{align}
Combining \eqref{Lemma2,1} and \eqref{Lemma2,2} gives us
\begin{align}
\partial_{t}\kappa&=\partial_{t}\oo{k\oo{h+h_{\theta\theta}}}\nonumber\\
&=\oo{-1}^{p}\oo{h^{3}\kappa_{\sigma^{2p+2}}+3kh\cdot h_{\theta}\kappa_{\sigma^{2p+1}}+k\kappa\cdot\kappa_{\sigma^{2p}}}\oo{h+h_{\theta\theta}}\nonumber\\
&\quad+\oo{-1}^{p}h^{3}\oo{h+h_{\theta\theta}}_{\sigma}\kappa_{\sigma^{2p+1}}\nonumber\\
&=\oo{-1}^{p}\oo{h^{3}\oo{h+h_{\theta\theta}}\kappa_{\sigma^{2p+1}}}_{\sigma}+\oo{-1}^{p}\kappa^{2}\cdot\kappa_{\sigma^{2p}}.\label{Lemma2,3}
\end{align}
Applying Lemma \ref{Lemma1} then yields
\begin{align*}
\frac{d}{dt}\intmink{\kappa}&=\intmink{\partial_{t}\kappa}+\oo{-1}^{p+1}\intmink{\kappa^{2}\cdot\kappa_{\sigma^{2p}}}\nonumber\\
&=\oo{-1}^{p}\intmink{\oo{h^{3}\oo{h+h_{\theta\theta}}\kappa_{\sigma^{2p+1}}}_{\sigma}}+\oo{-1}^{p}\intmink{\kappa^{2}\cdot\kappa_{\sigma^{2p}}}\\
&\quad+\oo{-1}^{p+1}\intmink{\kappa^{2}\cdot\kappa_{\sigma^{2p}}}\\
&=\oo{-1}^{p}\intmink{\oo{h^{3}\oo{h+h_{\theta\theta}}\kappa_{\sigma^{2p+1}}}_{\sigma}}=0.
\end{align*}
Here the last step follows from the divergence theorem because $\Gamma\oo{\cdot,t}$ is closed. This gives the first claim of the lemma. The second claim follows immediately from combining the first claim and the first claim of Corollary \ref{Corollary1}:
\[
\frac{d}{dt}\bar{\kappa}=-\mathscr{L}^{-2}\intmink{\kappa}\cdot\frac{d}{dt}\mathscr{L}=\frac{2\mathscr{A}(\tilde{\mathcal{I}})}{\mathscr{L}^{2}}\intmink{\kappa_{\sigma^{p}}^{2}}\geq0.
\]
For the final claim regarding the isoperimetric ratio, simply combine the previous two results:
\begin{equation}
\frac{d}{dt}\mathscr{I}=\frac{d}{dt}\oo{\frac{\mathscr{L}^{2}}{4\mathscr{A}(\tilde{\mathcal{I}})\,\mathscr{A}(\Gamma)}}=-\frac{\mathscr{L}\intmink{\kappa_{\sigma^{p}}^{2}}}{2\mathscr{A}(\tilde{\mathcal{I}})\,\mathscr{A}(\Gamma)}\leq-\frac{2\mathscr{I}\intmink{\kappa_{\sigma^{p}}^{2}}}{\mathscr{L}(0)}.\label{Corollary1,8}
\end{equation}
Hence the claim follows.
\end{proof}
\begin{remark}\label{Remark1}
Consider a one-parameter family of Minkowski immersions $\Gamma:\mathbb{S}^{1}\times\left[0,T\right)\rightarrow\mathcal{M}^{2}$ with a general (Minkowski) normal speed $\mathcal{F}$, that is,
\[
\partial_{t}^{N}\Gamma=\mathcal{F}.
\]
Then by following the same procedure in the proof of the Corollary \ref{Corollary1}, the Minkowski length $\mathscr{L}$ can be seen to evolve via the equation
\[
\frac{d}{dt}\mathscr{L}\oo{\Gamma}=-\intmink{\kappa\cdot\mathcal{F}}\geq-\oo{\intmink{\kappa^{2}}\intmink{\mathcal{F}^{2}}}^{\frac{1}{2}},
\] 
with equality if and only if $\mathcal{F}$ is a (positive) scalar multiple of $\kappa$. This establishes \eqref{ACSF} as the steepest descent for the Minkowski length functional.
\end{remark}
\begin{remark}\label{Remark2}
The result $\frac{d}{dt}\mathscr{I}\leq0$ from Corollary \eqref{Corollary1} is critical. This is because by our main theorem we wish to prove convergence to a homothetic rescaling of the isoperimetrix $\tilde{\mathcal{I}}$, which \emph{minimises} the ratio $\mathscr{I}$ amongst all closed curves immersed in $\mathcal{M}^{2}$. 
\end{remark}
\end{section}
\begin{section}{The Minkowski normalised oscillation of curvature}\label{KoscSection}
In this section we study the behaviour of a scale-invariant quantity
\begin{equation}
\kosc=\mathscr{L}\intmink{(\kappa-\overline{\kappa})^{2}},\label{KoscDefinition}
\end{equation}
which we call the \emph{Minkowski} or \emph{anisotropic normalised oscillation
of curvature}. It is the anisotropic analog to an energy that was first 
used by the second author in his study of the curve diffusion flow in the
Euclidean plane \cite{Wheeler5}. The energy is a natural one for our study here
as for the flow \eqref{APH} it is bounded a-priori in $L^{1}$. To see this one
first notes that because $\intmink{(\kappa-\overline{\kappa})}=0$ and
$\intmink{\kappa_{\sigma^{i}}}=0\,(i\geq1)$ because of the periodicity of
$\Gamma$, we may apply the results of Lemma \ref{AppendixLemma1} $p$ times in
succession:
\[
\kosc=\mathscr{L}\intmink{(\kappa-\overline{\kappa})^{2}}\leq\mathscr{L}\oo{\frac{\mathscr{L}}{2\pi}}^{2p}\intmink{\kappa_{\sigma^{p}}^{2}}=-\frac{1}{2(p+1)(2\pi)^{2p}}\frac{d}{dt}(\mathscr{L}^{2(p+1)}).
\]
Therefore
\begin{equation}
\int_{0}^{t}{\kosc(\tau)\,d\tau}\leq\frac{1}{2(p+1)(2\pi)^{2p}}\,\mathscr{L}(\Gamma_{0}).\label{KoscBounded}
\end{equation}
Similarly, by applying Lemma \ref{AppendixLemma2} $p$ times we obtain
\begin{equation}
\int_{0}^{t}{||\kappa-\overline{\kappa}||_{\infty}^{2}\,d\tau}\leq\frac{1}{2p(2\pi)^{2p-1}}\mathscr{L}^{2p}(\gamma_{0}).\label{KoscBoundedLinf}
\end{equation}
We now formulate the evolution equation for $\kosc$.
\begin{prop}\label{Proposition1}
Suppose $\Gamma:\mathbb{S}^{1}\times\left[0,T\right)\rightarrow\mathcal{M}^{2}$ satisfies \eqref{APH}. Define $\mathscr{Q}:=h^{3}\oo{h+h_{\theta\theta}}$, where $h=r^{-1}$ is the radial support function of the indicatrix $\partial\mathcal{U}$. Then
\begin{align}
&\frac{d}{dt}\oo{\kosc+8\mathscr{A}(\tilde{\mathcal{I}})^{2}\ln{\mathscr{L}}}+2\mathscr{L}\intmink{\mathscr{Q}\kappa_{\sigma^{p+1}}^{2}}\nonumber\\
&\quad=\mathscr{L}\intmink{\kappa_{\sigma^{p}}\oo{\kave^{3}+\bar{\kappa}\kave^{2}}_{\sigma^{p}}}-2\mathscr{L}\sum_{l=0}^{p-1}\binom{p}{l}\intmink{\mathscr{Q}_{\sigma^{p-l}}\kappa_{\sigma^{l+1}}\kappa_{\sigma^{p+1}}}.\label{Proposition1,000}
\end{align}
\end{prop}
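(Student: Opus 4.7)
The proof plan is to differentiate $\kosc=\mathscr{L}\int_\Gamma (\kappa-\bar{\kappa})^2\,d\sigma$ directly via the product rule, using Corollary \ref{Corollary1} for the $\mathscr{L}$-factor and Lemma \ref{Lemma1} for $\int_\Gamma \kave^2\,d\sigma$. Applying Lemma \ref{Lemma1} to $f=\kave^2$, the contribution from $\bar{\kappa}_t$ drops because $\int_\Gamma \kave\,d\sigma=0$, leaving
\[
\frac{d}{dt}\int_\Gamma \kave^2\,d\sigma
= 2\int_\Gamma \kave\,\kappa_t\,d\sigma
 + (-1)^{p+1}\int_\Gamma \kave^2\,\kappa\,\kappa_{\sigma^{2p}}\,d\sigma.
\]
Inserting $\kappa_t = (-1)^p(\mathscr{Q}\kappa_{\sigma^{2p+1}})_\sigma + (-1)^p\kappa^2\kappa_{\sigma^{2p}}$, established in the proof of Corollary \ref{Corollary1}, splits the right-hand side into a ``fourth-order in $\kappa$'' term and a ``cubic-in-$\kappa$'' remainder.

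For the fourth-order term, $2(-1)^p\int_\Gamma \kave\,(\mathscr{Q}\kappa_{\sigma^{2p+1}})_\sigma\,d\sigma$, I would integrate by parts $p+1$ times (using $\kave_\sigma=\kappa_\sigma$) to obtain $-2\int_\Gamma (\mathscr{Q}\kappa_\sigma)_{\sigma^p}\kappa_{\sigma^{p+1}}\,d\sigma$, then apply Leibniz's rule
\[
(\mathscr{Q}\kappa_\sigma)_{\sigma^p}
= \mathscr{Q}\kappa_{\sigma^{p+1}}
 + \sum_{l=0}^{p-1}\binom{p}{l}\mathscr{Q}_{\sigma^{p-l}}\kappa_{\sigma^{l+1}}
\]
to peel off the coercive term $-2\mathscr{L}\int_\Gamma \mathscr{Q}\kappa_{\sigma^{p+1}}^2\,d\sigma$ (which is transposed to the left-hand side) and leave behind the explicit error sum in the statement.

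For the cubic remainder, I would combine the $2(-1)^p\int \kave\,\kappa^2\kappa_{\sigma^{2p}}$ piece inside $2\int \kave\,\kappa_t$ with the Lemma \ref{Lemma1} correction $(-1)^{p+1}\int \kave^2\kappa\kappa_{\sigma^{2p}}$, and integrate by parts $p$ times to rewrite the result as an integral against $\kappa_{\sigma^p}$. The key algebraic move is to use the decomposition $\kappa^3 = \kappa\cdot\kave^2 + 2\bar{\kappa}\kappa\kave + \bar{\kappa}^2\kappa$ together with the identity $\kappa\,\kave^2 = \kave^3 + \bar{\kappa}\kave^2$, which isolates the term $\mathscr{L}\int_\Gamma \kappa_{\sigma^p}(\kave^3+\bar{\kappa}\kave^2)_{\sigma^p}\,d\sigma$ appearing in the proposition. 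The role of the auxiliary quantity $8\mathscr{A}(\tilde{\mathcal{I}})^2\ln\mathscr{L}$ is then transparent: since $8\mathscr{A}(\tilde{\mathcal{I}})^2 = 2(\int_\Gamma \kappa\,d\sigma)^2 = 2\mathscr{L}^2\bar{\kappa}^2$, its derivative is
\[
\frac{d}{dt}\bigl(8\mathscr{A}(\tilde{\mathcal{I}})^2\ln\mathscr{L}\bigr)
= \frac{8\mathscr{A}(\tilde{\mathcal{I}})^2}{\mathscr{L}}\,\mathscr{L}_t
= -2\mathscr{L}\bar{\kappa}^2\int_\Gamma \kappa_{\sigma^p}^2\,d\sigma,
\]
which is precisely the combination needed to cancel the quadratic-in-$\bar{\kappa}$ leftovers produced by the expansion of $\kappa^3$.

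I expect the main obstacle to be the combinatorial bookkeeping in the last step: carefully tracking the $\bar{\kappa}$-linear and $\bar{\kappa}^2$-polynomial pieces that arise from both the Lemma \ref{Lemma1} correction and from expanding $\kappa=\kave+\bar{\kappa}$ inside the cubic integral, and verifying that after $p$-fold integration by parts these combine with the $\mathscr{L}_t\int\kave^2$ contribution and the $\ln\mathscr{L}$ derivative to leave exactly the stated right-hand side. Every other ingredient -- the $p+1$-fold integration by parts producing the coercive term, the Leibniz expansion of $(\mathscr{Q}\kappa_\sigma)_{\sigma^p}$, and the basic evolution of $\mathscr{L}$ and $\bar{\kappa}$ -- is routine given the lemmas already in hand.
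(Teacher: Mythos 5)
Your plan follows essentially the same route as the paper's proof: differentiate $\kosc$ by the product rule, apply Lemma \ref{Lemma1} with $f=\kave^{2}$ (the $\bar{\kappa}_{t}$ contribution vanishing since $\intmink{\kave}=0$), insert the evolution \eqref{Lemma2,3} of $\kappa$, integrate by parts $p+1$ times and use Leibniz to peel off the coercive term $2\mathscr{L}\intmink{\mathscr{Q}\kappa_{\sigma^{p+1}}^{2}}$ together with the $\mathscr{Q}$-error sum, and expand the cubic remainder in $\kave$ so that the $\bar{\kappa}^{2}$-piece is exactly absorbed by $\frac{d}{dt}\oo{8\mathscr{A}(\tilde{\mathcal{I}})^{2}\ln\mathscr{L}}$ via $8\mathscr{A}(\tilde{\mathcal{I}})^{2}=2\mathscr{L}^{2}\bar{\kappa}^{2}$. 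Two bookkeeping remarks, neither of which affects the later estimates: the faithful computation also leaves the nonnegative term $\mathscr{L}^{-1}\llll{\kappa_{\sigma^{p}}}_{2}^{2}\,\kosc$ on the left-hand side (it appears in the paper's own derivation and in the ensuing corollary, though it is absent from the displayed statement), and the combined cubic integrand is $2\kave\kappa^{2}-\kave^{2}\kappa=\kave^{3}+3\bar{\kappa}\kave^{2}+2\bar{\kappa}^{2}\kave$, so carrying out your expansion honestly yields the coefficient $3\bar{\kappa}\kave^{2}$ rather than $\bar{\kappa}\kave^{2}$.
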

\begin{proof}
This is a direct calculation:
\begin{align}
\frac{d}{dt}\kosc&=\frac{d}{dt}\oo{\mathscr{L}\intmink{\oo{\kappa-\bar{\kappa}}^{2}}}\nonumber\\
&=-\intmink{\kappa_{\sigma^{p}}^{2}}\cdot\intmink{\oo{\kappa-\bar{\kappa}}^{2}}+2\mathscr{L}\intmink{\oo{\kappa-\bar{\kappa}}\partial_{t}\kappa}\nonumber\\
&\quad+\oo{-1}^{p+1}\intmink{\oo{\kappa-\bar{\kappa}}^{2}\kappa\cdot\kappa_{\sigma^{2p}}}\nonumber\\
&=-\frac{\llll{\kappa_{\sigma^{p}}}_{2}^{2}}{\mathscr{L}}\kosc+2\oo{-1}^{p}\mathscr{L}\intmink{\kave\oo{\oo{h^{3}\oo{h+h_{\theta\theta}}\kappa_{\sigma^{2p+1}}}_{\sigma}+\kappa^{2}\kappa_{\sigma^{2p}}}}\nonumber\\
&\quad+\oo{-1}^{p+1}\mathscr{L}\intmink{\kave^{2}\kappa\cdot\kappa_{\sigma^{2p}}}.\nonumber
\end{align}
Hence
\begin{align}
&\frac{d}{dt}\kosc+\frac{\llll{\kappa_{\sigma^{p}}}_{2}^{2}}{\mathscr{L}}\kosc+2\mathscr{L}\intmink{\oo{\mathcal{Q\kappa_{\sigma}}}_{\sigma^{p}}\kappa_{\sigma^{p+1}}}\nonumber\\
&=\oo{-1}^{p}\mathscr{L}\intmink{\oo{2\oo{\kappa-\bar{\kappa}}\kappa^{2}-\oo{\kappa-\bar{\kappa}}^{2}\kappa}\kappa_{\sigma^{2p}}}\nonumber\\
&=\mathscr{L}\intmink{\kappa_{\sigma^{p}}\oo{\kave^{3}+\bar{\kappa}\kave^{2}}_{\sigma^{p}}}+2\bar{\kappa}^{2}\mathscr{L}\llll{\kappa_{\sigma^{p}}}_{2}^{2}\nonumber\\
&=\mathscr{L}\intmink{\kappa_{\sigma^{p}}\oo{\kave^{3}+\bar{\kappa}\kave^{2}}_{\sigma^{p}}}-8\mathscr{A}(\tilde{\mathcal{I}})^{2}\frac{d}{dt}\ln{\mathscr{L}}.\nonumber
\end{align}
Applying the general Leibniz rule to the last term on the left hand side and rearranging then gives \eqref{Proposition1,000}. 
\end{proof}

\begin{lem}\label{NewLemmaInequality}
If $\kosc\leq1$ then we have
\begin{equation}
\mathscr{L}\intmink{\kappa_{\sigma^{p}}\oo{\kave^{3}+\bar{\kappa}\kave^{2}}_{\sigma^{p}}}\leq \mathscr{L}\oo{c_{1}\,\kosc+c_{2}\,\sqrt{\kosc}}\label{NewLemmaInequality1}
\end{equation}
for some universal constants $c_{i}(p)>0$.
\end{lem}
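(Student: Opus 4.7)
The plan is to expand the differentiated powers of $\kave$ using the general Leibniz rule, rewrite the resulting integrand in $P$-style notation, and then apply the interpolation estimates of Lemmas~\ref{AppendixLemma4} and \ref{AppendixLemma5} in combination with the iterated Poincar\'e inequality of Lemma~\ref{AppendixLemma1}. Since $\bar\kappa$ depends only on $t$, I have $\kappa_{\sigma^j}=(\kave)_{\sigma^j}$ for every $j\geq 1$, so writing $u:=\kave$ and applying Leibniz gives $(u^3)_{\sigma^p}=P_p^{3,p}(u)$ and $(u^2)_{\sigma^p}=P_p^{2,p}(u)$. Folding the outer $u_{\sigma^p}=\kappa_{\sigma^p}$ into the $P$-notation, the left-hand side of \eqref{NewLemmaInequality1} rewrites as
\[
\mathscr{L}\intmink{P_{2p}^{4,p}(u)}+\bar\kappa\,\mathscr{L}\intmink{P_{2p}^{3,p}(u)}.
\]

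Next I would apply Lemmas~\ref{AppendixLemma4} and \ref{AppendixLemma5} to each $P$-style integral; these return mixed bounds in $\|u\|_2$ and $\|u_{\sigma^{p+1}}\|_2$ with exponents fixed by scaling. The iterated Poincar\'e inequality, which is applicable because every $u_{\sigma^j}$ with $j\geq 1$ has vanishing mean on the closed curve, consolidates the higher-derivative norms back into powers of $\|u\|_2$ weighted by factors of $\mathscr{L}$. Substituting the identity $\|u\|_2^2=\kosc/\mathscr{L}$ converts all remaining quantities into powers of $\kosc$ and $\mathscr{L}$. The cubic contribution produces a term proportional to $\kosc^m$ with $m\geq 1$, while the $\bar\kappa$-weighted quadratic produces a term $\kosc^n$ with $n\geq 1/2$, the missing half-power tracking the single missing factor of $u$. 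The prefactor $\bar\kappa=2\mathscr{A}(\tilde{\mathcal{I}})/\mathscr{L}$ is bounded uniformly along the flow by $\sqrt{\mathscr{A}(\tilde{\mathcal{I}})/\mathscr{A}(\Gamma_0)}$ via the anisotropic isoperimetric inequality of Proposition~\ref{IsoperimetrixProposition1}, so it is safely absorbed into the universal constants. Finally, under the hypothesis $\kosc\leq 1$, every power $\geq 1$ is dominated by $\kosc$ and every power $\geq 1/2$ by $\sqrt{\kosc}$, yielding the claimed bound with $c_1,c_2$ depending only on $p$ and on fixed Minkowski-geometric data.

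The main obstacle is the asymmetry forced by the $\bar\kappa\kave^2$ term: with one fewer factor of $u$ than the cubic $\kave^3$ term, it necessarily produces a half-power deficit in $\kosc$, so the $c_2\sqrt{\kosc}$ summand cannot be avoided. The hypothesis $\kosc\leq 1$ is essential here, as without it a sub-linear power could not be dominated by a linear one. Careful bookkeeping of the factor count $j$ versus the derivative count $k$ in each $P_k^{j,p}$-expression is required to keep the cubic and quadratic contributions separate during the interpolation step, and to match each with the correct power of $\kosc$.
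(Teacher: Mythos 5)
Your setup is the right one (rewrite the integrand as $P^{4,p}_{2p}\kave$ and $\bar\kappa\,P^{3,p}_{2p}\kave$ and feed these into Lemmas \ref{AppendixLemma4}--\ref{AppendixLemma5}), and it matches the strategy of the cited Lemma~7 of \cite{parkins2015polyharmonic} to which the paper defers. But there is a fatal step in the middle: you claim that the iterated Poincar\'e inequality of Lemma \ref{AppendixLemma1} ``consolidates the higher-derivative norms back into powers of $\|u\|_2$.'' That inequality runs in the opposite direction --- for a mean-zero periodic function it bounds $\intmink{f^{2}}$ by $\intmink{f_{\sigma}^{2}}$, never the reverse --- so there is no mechanism for eliminating the top-order factor $\bigl(\mathscr{L}^{2p+3}\intmink{\kappa_{\sigma^{p+1}}^{2}}\bigr)^{\eta/2}$ that Lemma \ref{AppendixLemma5} produces. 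What the interpolation machinery actually yields (using \eqref{AppendixLemma5,3}, i.e.\ $\kosc\leq c\,\mathscr{L}^{2p+3}\intmink{\kappa_{\sigma^{p+1}}^{2}}$, to \emph{raise} the exponent $\eta/2$ to $1$) is
\[
\mathscr{L}\intmink{\kappa_{\sigma^{p}}\oo{\kave^{3}+\bar{\kappa}\kave^{2}}_{\sigma^{p}}}\leq \mathscr{L}\oo{c_{1}\,\kosc+c_{2}\,\sqrt{\kosc}}\intmink{\kappa_{\sigma^{p+1}}^{2}},
\]
where the cubic term contributes the $\kosc$ power and the $\bar\kappa\kave^{2}$ term the $\sqrt{\kosc}$ power (and $\bar\kappa\mathscr{L}=2\mathscr{A}(\tilde{\mathcal{I}})$ is absorbed exactly, with no need for the isoperimetric inequality). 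This is precisely the form consumed in the subsequent corollary, where the bracket $2\mathscr{Q}_{\star}-c_{1}\kosc-c_{2}\sqrt{\kosc}$ multiplies $\mathscr{L}\intmink{\kappa_{\sigma^{p+1}}^{2}}$.

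A scaling check would have flagged the problem: under $\Gamma\mapsto\lambda\Gamma$ the left-hand side of \eqref{NewLemmaInequality1} scales like $\lambda^{-2(p+1)}$ while $\mathscr{L}\,\kosc$ scales like $\lambda$, so the inequality as literally printed cannot hold with universal constants (the statement in the paper is missing the factor $\intmink{\kappa_{\sigma^{p+1}}^{2}}$). Your proposal does not repair this; it manufactures the missing decay by running Poincar\'e backwards. The fix is not cosmetic bookkeeping but restoring the $\intmink{\kappa_{\sigma^{p+1}}^{2}}$ factor on the right-hand side and carrying it through the rest of the argument, exactly as in \eqref{Proposition1,2*}.
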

\begin{proof}
The proof is essentially the same as that of Lemma $7$ from \cite{parkins2015polyharmonic}. Note that the integrand here does not depend on $\mathscr{Q}$ implicitly.
\end{proof}

\begin{cor}
Suppose $\Gamma:\mathbb{S}^{1}\times\left[0,T\right)\rightarrow\mathcal{M}^{2}$ solves \eqref{APH}. Also, define  $\mathscr{Q}_{\star}:=\min\mathscr{Q}>0$. If $\kosc\leq1$, then there exists universal constants $c_{i}(p,\partial\mathcal{U})>0$ such that 
\begin{multline}
\frac{d}{dt}\oo{\kosc+8\mathscr{A}(\tilde{\mathcal{I}})^{2}\ln{\mathscr{L}}}+\frac{\llll{\kappa_{\sigma^{p}}}_{2}^{2}}{\mathscr{L}}\kosc\\
+\mathscr{L}\oo{2\mathscr{Q}_{\star}-c_{1}\kosc-c_{2}\sqrt{\kosc}}\intmink{\kappa_{\sigma^{p+1}}^{2}}\leq0\label{Proposition1,00}
\end{multline}
where $c_{i}=c_{i}\oo{p,\partial\mathcal{U}}$ are universal constants. Therefore, if there exists a positive time $T^{*}$ such that
\begin{equation}
\kosc\oo{t}\leq\min\left\{\frac{c_{2}^{2}+4c_{1}\mathscr{Q}_{\star}-c_{2}\sqrt{c_{2}^{2}+8c_{1}\mathscr{Q}_{\star}}}{4c_{1}^{2}},1\right\}=:2\mathcal{K}^{\star}\,\,\text{for}\,\,t\in\left[0,T^{\star}\right),\label{Proposition1,0}
\end{equation}
then during this time we have the estimate
\begin{equation}
\kosc\oo{t}+\int_{0}^{t}{\frac{\llll{\kappa_{\sigma^{p}}}_{2}^{2}}{\mathscr{L}\oo{\tau}}\kosc\oo{\tau}\,d\tau}\leq\kosc\oo{0}+8\mathscr{A}(\tilde{\mathcal{I}})^{2}\ln\oo{\frac{\mathscr{L}\oo{0}}{\mathscr{L}\oo{t}}}.\label{Proposition1,0000}
\end{equation}
\end{cor}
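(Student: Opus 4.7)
The plan is to upgrade the identity of Proposition \ref{Proposition1} into the claimed differential inequality by bounding each term on the right-hand side of \eqref{Proposition1,000} in a form that can be absorbed into the principal dissipative integral on the left-hand side, and then to integrate. I would first observe that during the proof of Proposition \ref{Proposition1} the term $\mathscr{L}^{-1}\llll{\kappa_{\sigma^p}}_2^2\,\kosc$ appears explicitly on the left prior to the Leibniz rearrangement, so it should be retained here. Because $\mathscr{Q}=h^3\oo{h+h_{\theta\theta}}$ is smooth and strictly positive on $\mathbb{S}^1$ (positivity of $h+h_{\theta\theta}$ follows from strict Euclidean convexity of $\tilde{\mathcal{I}}$, cf.\ \eqref{EuclideanCurvature}), the constant $\mathscr{Q}_\star = \min_{\mathbb{S}^1}\mathscr{Q}>0$ depends only on $\partial\mathcal{U}$, giving
\[
2\mathscr{L}\intmink{\mathscr{Q}\,\kappa_{\sigma^{p+1}}^{2}} \;\geq\; 2\mathscr{Q}_\star\,\mathscr{L}\intmink{\kappa_{\sigma^{p+1}}^{2}}.
\]

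Next I would bound the two right-hand side terms of \eqref{Proposition1,000}. Lemma \ref{NewLemmaInequality} handles the cubic-$\kave$ integral directly, contributing a bound of the shape $\mathscr{L}\oo{c_1\kosc + c_2\sqrt{\kosc}}\intmink{\kappa_{\sigma^{p+1}}^{2}}$ after the Gagliardo--Nirenberg interpolation at its heart. For the mixed sum $2\mathscr{L}\sum_{l=0}^{p-1}\binom{p}{l}\intmink{\mathscr{Q}_{\sigma^{p-l}}\kappa_{\sigma^{l+1}}\kappa_{\sigma^{p+1}}}$ I would apply Cauchy--Schwarz termwise, expand each $\mathscr{Q}_{\sigma^{p-l}}$ as a $P$-style polynomial in $\kappa$ and its lower-order derivatives using the chain rule together with the smoothness of $h$, and then combine the interpolation estimates of Lemma \ref{AppendixLemma4}--\ref{AppendixLemma5} with Young's $\varepsilon$-inequality to extract at least one factor of $\kosc$ or $\sqrt{\kosc}$ from every term while leaving at most a single factor of $\llll{\kappa_{\sigma^{p+1}}}_2^2$. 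Redefining $c_1,c_2$ to absorb the resulting constants yields \eqref{Proposition1,00}.

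The explicit threshold $2\mathcal{K}^\star$ in \eqref{Proposition1,0} is constructed precisely to force $c_1\kosc + c_2\sqrt{\kosc} \leq 2\mathscr{Q}_\star$: setting $y = \sqrt{\kosc}$, the positive root of $c_1 y^2 + c_2 y = 2\mathscr{Q}_\star$ is $y^+ = \tfrac{-c_2 + \sqrt{c_2^2 + 8c_1\mathscr{Q}_\star}}{2c_1}$, and one checks that $\tfrac12(y^+)^2$ coincides with the explicit value listed in \eqref{Proposition1,0} (the ``$\leq 1$'' inside the minimum merely keeps the hypothesis of Lemma \ref{NewLemmaInequality} in force). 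Under $\kosc \leq 2\mathcal{K}^\star$ the coefficient of $\intmink{\kappa_{\sigma^{p+1}}^{2}}$ in \eqref{Proposition1,00} is nonnegative and may be discarded, leaving
\[
\frac{d}{dt}\bigl(\kosc + 8\mathscr{A}(\tilde{\mathcal{I}})^2\ln\mathscr{L}\bigr) + \frac{\llll{\kappa_{\sigma^p}}_2^{2}}{\mathscr{L}}\kosc \leq 0,
\]
which integrates over $[0,t]$ and rearranges immediately to \eqref{Proposition1,0000}.

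The main obstacle is the mixed-sum estimate. The difficulty is that $\mathscr{Q}_{\sigma^{p-l}}$ is not a fixed bounded function on $\mathbb{S}^1$ but a polynomial in $\kappa$ and its $\sigma$-derivatives (the chain rule through $\theta_\sigma$ introduces $\kappa$-factors at every differentiation), so the bookkeeping must verify that every product in the $P$-style expansion still contains at least one factor of $\kappa - \bar{\kappa}$ in a norm that interpolates to a power of $\sqrt{\kosc}$, and that the top derivative appears with total weight at most two, so that a single copy of $\llll{\kappa_{\sigma^{p+1}}}_2^2$ suffices to absorb it. The $P$-style calculus introduced earlier together with the interpolation lemmas of the appendix are the natural tools for this, and the argument mirrors the corresponding estimate from \cite{parkins2015polyharmonic}.
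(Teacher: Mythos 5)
Your proposal is correct and follows essentially the same route as the paper: the identity of Proposition \ref{Proposition1} plus Lemma \ref{NewLemmaInequality} for the cubic term, a $P$-style expansion of the derivatives of $\mathscr{Q}$, interpolation to extract powers of $\sqrt{\kosc}$, absorption into the dissipative term via $\mathscr{Q}_{\star}$, and integration — and your algebraic verification that $2\mathcal{K}^{\star}$ forces $c_{1}\kosc+c_{2}\sqrt{\kosc}\leq 2\mathscr{Q}_{\star}$ makes explicit what the paper leaves implicit. The only deviation is in the mixed sum, where the paper first integrates by parts so that every resulting term has top derivative of order at most $p$ and Lemma \ref{AppendixLemma5} applies directly at the $L^{1}$ level with $K=p+1$, whereas you peel off $\llll{\kappa_{\sigma^{p+1}}}_{2}$ by Cauchy--Schwarz first; both work, but your version requires the (standard) $L^{2}$ analogue of the appendix interpolation estimates rather than the lemmas exactly as stated.
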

\begin{proof}
First note that the first term on the right hand side of \eqref{Proposition1,000} was dealt with in Lemma \ref{NewLemmaInequality}. We will therefore only need to pay our attention to the $P-$style terms on the right hand side of \eqref{Proposition1,000}.
We will use the results of Proposition \ref{Proposition1} and Lemma \ref{NewLemmaInequality} but first need a general rule that tells as how to deal with derivatives of $\mathscr{Q}$. This turns out to be quite easy: for any $j\in\mathbb{N}\cup\left\{0\right\}$ we have
\begin{equation}
\mathscr{Q}_{\sigma^{j}}=\sum_{i=1}^{j}\tilde{c}_{i}P_{i}^{j-i}\oo{\kappa},\,\,j\in\mathbb{N},\label{Q1}
\end{equation}
where $\tilde{c}_{i}$ is a function that depends upon the radial support function $h$ and its derivatives $h_{\theta},h_{\theta^{2}},\dots h_{\theta^{i+2}}$. For example, an application of the chain rule gives
$$\mathscr{Q}_{\sigma}=\oo{\frac{\partial}{\partial\theta}\oo{h^{3}\oo{h+h_{\theta\theta}}}\cdot\frac{h}{\oo{h+h_{\theta\theta}}}}\kappa=\tilde{c}_{1}\kappa=\sum_{i=1}^{1}\tilde{c}_{1}P_{i}^{1-i}\oo{\kappa},$$
where $\tilde{c}_{1}\oo{h,h_{\theta},h_{\theta^{2}},h_{\theta^{3}}}=\oo{h^{3}\oo{h+h_{\theta\theta}}}_{\theta}\cdot\frac{h}{\oo{h+h_{\theta\theta}}}$. The proof of this identity comes from a simple inductive argument. Next we combine the identity 
\begin{equation}
P_{k}^{n}\oo{\kappa}=\sum_{l=0}^{k-1}\bar{\kappa}^{l}P_{k-l}^{n}\oo{\kappa-\bar{\kappa}}\label{Pstyle!}
\end{equation}
with \eqref{Q1} to obtain
\begin{equation}
\mathscr{Q}_{\sigma^{j}}=\sum_{i=1}^{j}\tilde{c}_{i}\sum_{m=0}^{i-1}\bar{\kappa}^{m}P_{i-m}^{j-i}\kave,\,\,j\in\mathbb{N},\label{Q2}
\end{equation}
which is a polynomial in $\kave$ with coefficients that depend only on $h$ and its derivatives. In order to use Lemma \ref{AppendixLemma5} correctly to estimate the remaining terms in \eqref{Proposition1,000} we will need to make sure that the highest derivative of $\kave$ is of the order no greater than $(p+1)$. We effectively do this by integrating by parts. If $l=p-1$ we integrate by parts to give
\[
\intmink{\mathscr{Q}_{\sigma}\kappa_{\sigma^{p}}\kappa_{\sigma^{p+1}}}=-\intmink{\mathscr{Q}_{\sigma}\kappa_{\sigma^{p}}\kappa_{\sigma^{p+1}}}-\intmink{\kappa_{\sigma^{p}}^{2}\mathscr{Q}_{\sigma^{2}}}.
\]
Therefore
\begin{align}
&-2\mathscr{L}\intmink{\mathscr{Q}_{\sigma}\kappa_{\sigma^{p}}\kappa_{\sigma^{p+1}}}\nonumber\\
&\quad=\mathscr{L}\intmink{\kappa_{\sigma^{p}}^{2}\mathscr{Q}_{\sigma^{2}}}\nonumber\\
&\quad=\mathscr{L}\sum_{i=1}^{2}\sum_{m=0}^{i-1}\intmink{\kave_{\sigma^{p}}^{2}\tilde{c}_{i}\oo{\bar{\kappa}}^{m}P_{i-m}^{2-i}\kave}\nonumber\\
&\quad\leq c\sum_{i=1}^{2}\sum_{m=0}^{i-1}\mathscr{L}^{1-m}\intmink{\norm{P_{i-m+2}^{2\oo{p+1}-i,p}\kave}}.\label{Proposition1,1}
\end{align}
Note that this expansion does not depend on $l$. Similarly, if $l<p-1$ we have
\begin{align}
&-2\mathscr{L}\binom{p}{l}\intmink{\mathscr{Q}_{\sigma^{p-l}}\kappa_{\sigma^{l+1}}\kappa_{\sigma^{p+1}}}\nonumber\\
&=2\mathscr{L}\binom{p}{l}\intmink{\kave_{p}\oo{\mathscr{Q}_{\sigma^{p-l+1}}\kave_{\sigma^{l+1}}+\mathscr{Q}_{\sigma^{p-l}}\kave_{\sigma^{l+2}}}}\nonumber\\
&\leq c\sum_{i=1}^{p-l}\sum_{m=0}^{i-1}\mathscr{L}^{1-m}\intmink{\norm{P_{i-m+2}^{2\oo{p+1}-i,p}\kave}}.\label{Proposition1,2}
\end{align}
Note that the right hand side of \eqref{Proposition1,2} takes the same form of the right hand side of \eqref{Proposition1,1} with $l=p-2$. 
Next for $m\in\left\{0,1,\dots,i-1\right\}$, Lemma \ref{AppendixLemma5} gives us 
\begin{align}
&\mathscr{L}^{1-m}\intmink{\norm{P_{i-m+2}^{2\oo{p+1}-i,p}\kave}}\nonumber\\
&\leq c\mathscr{L}^{1-m}\cdot\mathscr{L}^{1-\oo{i-m+2}-2\oo{p+1}+i}\oo{\kosc}^{\frac{i-m}{2}+\frac{i+m}{4\oo{p+1}}}\oo{\mathscr{L}^{2p+3}\intmink{\kappa_{\sigma^{p+1}}^{2}}}^{1-\frac{i+m}{4\oo{p+1}}}\nonumber\\
&\leq c\mathscr{L}^{-2\oo{p+1}}\oo{\kosc}^{\frac{i-m}{2}}\oo{\mathscr{L}^{2p+3}\intmink{\kappa_{\sigma^{2p+3}}^{2}}}^{\frac{i+m}{4\oo{p+1}}}\oo{\mathscr{L}^{2p+3}\intmink{\kappa_{\sigma^{p+1}}^{2}}}^{1-\frac{i+m}{4\oo{p+1}}}\nonumber\\
&\leq c\mathscr{L}\oo{\kosc}^{\frac{i-m}{2}}\intmink{\kappa_{\sigma^{p+1}}^{2}}\\
&\leq c\sqrt{\kosc}\mathscr{L}\intmink{\kappa_{\sigma^{p+1}}^{2}}.\label{Proposition1,2*}
\end{align}
This last step follows because by assumption $\oo{i-m}/2\geq1/2$, and $\kosc\leq1$. 
Combining this and \eqref{NewLemmaInequality1}, and substituting back into \eqref{Proposition1,000} then gives \eqref{Proposition1,00}.
Integrating this result then gives equation \eqref{Proposition1,000} under the small-energy assumption \eqref{Proposition1,00}.
\end{proof}

\begin{prop}\label{Proposition2}
Suppose that $\Gamma:\mathbb{S}^{1}\times\left[0,T\right)\rightarrow\mathcal{M}^{2}$ solves \eqref{APH}. Additionally, suppose that $\Gamma_{0}$ is a simple closed curve satisfying 
\begin{equation}
\kosc\oo{\Gamma_{0}}\leq\mathcal{K}^{\star}\,\,\text{and}\,\,\mathscr{I}\oo{\Gamma_{0}}\leq\exp\oo{\mathcal{K}^{\star}/8\mathscr{A}(\tilde{\mathcal{I}})^{2}}.\label{Proposition2,1}
\end{equation}
Then we have
\begin{equation}
\kosc\oo{\Gamma}\leq2\mathcal{K}^{\star}\,\,\text{for}\,\,t\in\left[0,T\right).\label{Proposition2,2}
\end{equation}
\end{prop}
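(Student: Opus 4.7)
The plan is to run a straightforward bootstrap/continuity argument driven entirely by the integral estimate \eqref{Proposition1,0000} from the preceding Corollary, which has already absorbed all of the technical work in this section. Define
\[
T^{\star}:=\sup\oo{\left\{t\in[0,T):\kosc\oo{\tau}\leq 2\mathcal{K}^{\star}\text{ for every }\tau\in[0,t]\right\}}.
\]
Since $\kosc\oo{\Gamma_{0}}\leq\mathcal{K}^{\star}<2\mathcal{K}^{\star}$ and $t\mapsto\kosc\oo{t}$ is continuous, we have $T^{\star}>0$, and throughout $[0,T^{\star})$ the hypothesis of the Corollary is satisfied, so the estimate \eqref{Proposition1,0000} is available. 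The goal is to upgrade this to the strict bound $\kosc\oo{t}\leq 3\mathcal{K}^{\star}/2$ on $[0,T^{\star})$; by the continuity of $\kosc$, such a strict bound prohibits $T^{\star}<T$ and therefore yields \eqref{Proposition2,2}.

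The only step that still needs doing is to convert the hypothesis on $\mathscr{I}\oo{\Gamma_{0}}$ into a bound on the logarithm appearing on the right of \eqref{Proposition1,0000}. Because $\mathscr{A}\oo{\Gamma}$ is conserved along \eqref{APH} by Corollary \ref{Corollary1}, and because $\mathscr{I}\oo{\Gamma}\geq 1$ for any closed immersed curve with positive enclosed area by the anisotropic isoperimetric inequality \eqref{IsoperimetricRatio1}, one has
\[
\frac{\mathscr{L}\oo{0}}{\mathscr{L}\oo{t}}=\sqrt{\frac{\mathscr{I}\oo{0}}{\mathscr{I}\oo{t}}}\leq\sqrt{\mathscr{I}\oo{\Gamma_{0}}}.
\]
Taking logarithms, multiplying by $8\mathscr{A}(\tilde{\mathcal{I}})^{2}$, and invoking the hypothesis $\mathscr{I}\oo{\Gamma_{0}}\leq\exp\oo{\mathcal{K}^{\star}/8\mathscr{A}(\tilde{\mathcal{I}})^{2}}$ from \eqref{Proposition2,1} gives
\[
8\mathscr{A}(\tilde{\mathcal{I}})^{2}\ln\oo{\mathscr{L}\oo{0}/\mathscr{L}\oo{t}}\leq 4\mathscr{A}(\tilde{\mathcal{I}})^{2}\ln\mathscr{I}\oo{\Gamma_{0}}\leq\tfrac{1}{2}\mathcal{K}^{\star}.
\]

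Inserting this into \eqref{Proposition1,0000}, discarding the non-negative integral term on the left, and using $\kosc\oo{0}\leq\mathcal{K}^{\star}$ gives
\[
\kosc\oo{t}\leq\mathcal{K}^{\star}+\tfrac{1}{2}\mathcal{K}^{\star}=\tfrac{3}{2}\mathcal{K}^{\star}<2\mathcal{K}^{\star}\qquad\text{for every }t\in[0,T^{\star}).
\]
If $T^{\star}<T$ then by continuity $\kosc\oo{T^{\star}}=2\mathcal{K}^{\star}$, contradicting the strict inequality just established. Hence $T^{\star}=T$, proving the proposition. The only real subtlety is the dependence between the small-energy hypothesis \eqref{Proposition1,0} used by the Corollary and the conclusion we are trying to bootstrap: this is the standard continuity-method obstacle, and it is circumvented by the fact that the gap between $\mathcal{K}^{\star}$ and $2\mathcal{K}^{\star}$ in \eqref{Proposition1,0} leaves room for the logarithmic contribution $\mathcal{K}^{\star}/2$ coming from the isoperimetric-ratio hypothesis.
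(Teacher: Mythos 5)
Your proposal is correct and follows essentially the same route as the paper: a continuity/open-closed argument on the maximal time up to which $\kosc\leq2\mathcal{K}^{\star}$, combined with the integrated estimate \eqref{Proposition1,0000}, area conservation, and the anisotropic isoperimetric inequality to bound the logarithmic term by $\mathcal{K}^{\star}/2$ and so obtain the strict bound $3\mathcal{K}^{\star}/2<2\mathcal{K}^{\star}$. The paper phrases the continuity step as a contradiction with a ``maximal $T^{\star}<T$'' rather than your supremum formulation, but the substance is identical.
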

\begin{proof}
Assume for the sake of contradiction that $\kosc$ does \emph{not} remain bounded by $2\mathcal{K}^{\star}$. Then we can find a maximal $T^{\star}<T$ such that
\[
\kosc\oo{\Gamma}\leq2\mathcal{K}^{\star}\,\,\text{for}\,\,t\in\left[0,T^{\star}\right).
\]
Hence by \eqref{Proposition1,000} the following identity holds for $t\in\left[0,T^{\star}\right)$:
\begin{equation}
\kosc\oo{\Gamma_{t}}\leq\kosc\oo{\Gamma_{0}}+8\mathscr{A}(\tilde{\mathcal{I}})^{2}\ln\oo{\mathscr{L}\oo{\Gamma_{0}}/\mathscr{L}\oo{\Gamma_{t}}}.\label{Proposition2,3}
\end{equation}
Next the anisotropic isoperimetric inequality gives
\[
\frac{\mathscr{L}\oo{\Gamma_{0}}}{\mathscr{L}\oo{\Gamma_{t}}}\leq\frac{\mathscr{L}\oo{\Gamma_{0}}}{\sqrt{4\mathscr{A}\oo{\Gamma_{0}}\mathscr{A}(\tilde{\mathcal{I}})}}=\sqrt{\mathscr{I}\oo{\Gamma_{0}}},
\] 
and so \eqref{Proposition2,3} implies that
\begin{equation}
\kosc\oo{\Gamma_{t}}\leq\kosc\oo{\Gamma_{0}}+4\mathscr{A}(\tilde{\mathcal{I}})^{2}\ln\mathscr{I}\oo{\Gamma_{0}}\leq\mathcal{K}^{\star}+\mathcal{K}^{\star}/2=3\mathcal{K}^{\star}/2,\label{Proposition2,4}
\end{equation}
which is \emph{strictly} less than $2\mathcal{K}^{\star}$. The penultimate step here follows from the assumptions \eqref{Proposition2,1}. Taking $t\nearrow T^{\star}$ in \eqref{Proposition2,4} contradicts the definition of $T^{\star}$, and so we conclude that such a maximal $T^{\star}$ can not exist. The claim then follows.
\end{proof}

\begin{lem}\label{Lemma3}
Suppose that $\Gamma:\mathbb{S}^{1}\times\left[0,T\right)\rightarrow\mathcal{M}^{2}$ solves \eqref{APH}. Then for any $m\geq0$ we have
\begin{equation}
\frac{d}{dt}\intmink{\kappa_{\sigma^{m}}^{2}}\leq c\oo{m,p}\oo{\intmink{\kappa^{2}}}^{2\oo{m+p}+3}\label{Lemma3,0}
\end{equation}
for some constant $c>0$. Here $\kappa_{\sigma^{m}}$ refers to the $m^{th}$ repeated derivative of $\kappa$ with respect to $\sigma$.
\end{lem}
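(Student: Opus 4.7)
The plan is to apply Lemma \ref{Lemma1} with $f=\kappa_{\sigma^{m}}^{2}$, giving
\[
\frac{d}{dt}\intmink{\kappa_{\sigma^{m}}^{2}} = 2\intmink{\kappa_{\sigma^{m}}\partial_{t}\kappa_{\sigma^{m}}} + \oo{-1}^{p+1}\intmink{\kappa\kappa_{\sigma^{2p}}\kappa_{\sigma^{m}}^{2}}.
\]
To handle $\partial_{t}\kappa_{\sigma^{m}}$, I would first derive the commutator $\cc{\partial_{t},\partial_{\sigma}}=\oo{-1}^{p}\kappa\kappa_{\sigma^{2p}}\partial_{\sigma}$ directly from the evolution $\partial_{t}d\sigma=\oo{-1}^{p+1}\kappa\kappa_{\sigma^{2p}}d\sigma$ computed inside Lemma \ref{Lemma1} (equation \eqref{MinkowskiArclengthEvolution}). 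Iterating the commutator $m$ times and combining with
\[
\partial_{t}\kappa = \oo{-1}^{p}\oo{\mathscr{Q}\kappa_{\sigma^{2p+1}}}_{\sigma} + \oo{-1}^{p}\kappa^{2}\kappa_{\sigma^{2p}}
\]
from equation \eqref{Lemma2,3}, I would express $\partial_{t}\kappa_{\sigma^{m}}$ as the principal piece $\oo{-1}^{p}\oo{\mathscr{Q}\kappa_{\sigma^{2p+1}}}_{\sigma^{m+1}}$ plus a finite collection of $P$-style terms in $\kappa$ whose total derivative order is $2(m+p)+2$ and whose multiplicative factors depend smoothly only on $\partial\mathcal{U}$.

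Substituting back into $\intmink{\kappa_{\sigma^{m}}\partial_{t}\kappa_{\sigma^{m}}}$ and integrating by parts $p+1$ times, the top-order contribution becomes
\[
-2\intmink{\mathscr{Q}\,\kappa_{\sigma^{m+p+1}}^{2}}\le 0,
\]
since $\mathscr{Q}>0$ on the compact indicatrix. I would retain this non-positive term as an absorption reservoir rather than discarding it. Every remaining contribution (including the tail $\oo{-1}^{p+1}\intmink{\kappa\kappa_{\sigma^{2p}}\kappa_{\sigma^{m}}^{2}}$ from Lemma \ref{Lemma1} and the terms generated by differentiating $\mathscr{Q}$ via \eqref{Q1} and \eqref{Q2}) can then be written as a $P$-style expression $P_{a}^{b}\oo{\kappa}$ with highest-order factor $\kappa_{\sigma^{m+p+1}}$, total order $a\le 2\oo{m+p+1}$, and coefficients uniformly bounded in terms of $\partial\mathcal{U}$.

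Each such $P$-style integral is estimated via the Gagliardo-Nirenberg-type inequalities of Lemma \ref{AppendixLemma5}, producing bounds of the form $c\bigl(\intmink{\kappa^{2}}\bigr)^{\alpha}\bigl(\intmink{\kappa_{\sigma^{m+p+1}}^{2}}\bigr)^{\beta}$ with $\beta<1$. A standard Young inequality then absorbs the $\beta$-power into the retained negative leading term, leaving only a pure power of $\intmink{\kappa^{2}}$. Taking the maximum of $\alpha/(1-\beta)$ across all such terms should yield precisely the asserted exponent $2\oo{m+p}+3$.

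The principal obstacle is the index bookkeeping: one must verify that $\beta<1$ genuinely holds for every $P$-style term that emerges after the commutator expansion and the $p+1$ integrations by parts, so that Young's inequality can always be applied, and one must check that the worst-case combined exponent $\alpha/(1-\beta)$ matches $2(m+p)+3$ exactly. An additional subtlety is that derivatives falling on $\mathscr{Q}$ introduce extra curvature factors through \eqref{Q1}--\eqref{Q2}, so the derivative count must be tracked with care so that no term exceeds the allowed order $m+p+1$ in its highest derivative.
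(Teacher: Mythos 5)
Your proposal follows essentially the same route as the paper's proof: the commutator $[\partial_{t},\partial_{\sigma}]=(-1)^{p}\kappa\kappa_{\sigma^{2p}}\partial_{\sigma}$, its iteration combined with the evolution of $\kappa$, integration by parts to retain the dissipative term $-2\int_{\Gamma}\mathscr{Q}\,\kappa_{\sigma^{m+p+1}}^{2}\,d\sigma$, $P$-style bookkeeping for derivatives of $\mathscr{Q}$ via \eqref{Q1}--\eqref{Q2}, and interpolation (Lemma \ref{AppendixLemma5}) with Young absorption, the exponent $2(m+p)+3$ arising as $\mu+\nu-1$ for the worst term $P_{4}^{2(m+p)}(\kappa)$. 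The one bookkeeping point to fix: the interpolation lemma allows highest derivative $m+p$ (not $m+p+1$), which the paper arranges by an extra integration by parts on the terms $\mathscr{Q}_{\sigma^{j}}\kappa_{\sigma^{m+p+1}}\kappa_{\sigma^{m+p+1-j}}$ and by treating the cases $p>m$ and $p\leq m$ separately.
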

\begin{proof}
We first work out the commutator operator $\cc{\partial_{t},\partial_{\sigma}}$. This is relatively straightforward:
\begin{align}
\partial_{t\sigma}&=\partial_{t}\oo{r\norm{\Gamma_{u}}^{-1}\partial_{u}}\nonumber\\
&=\oo{\oo{-1}^{p+1}h_{\theta}\kappa_{\sigma^{2p+1}}\norm{\Gamma_{u}}^{-1}+r\oo{-1}^{p}\oo{h\cdot h_{\theta}\kappa_{\sigma^{2p+1}}+\kappa\cdot\kappa_{\sigma^{2p}}}\norm{\Gamma_{u}}^{-1}}\partial_{u}+\partial_{\sigma t}\nonumber\\
&=\oo{-1}^{p}\kappa\cdot\kappa_{\sigma^{2p}}\,\partial_{\sigma}+\partial_{\sigma t}.\nonumber
\end{align}
Hence
\begin{equation}
\cc{\partial_{t},\partial_{\sigma}}=\oo{-1}^{p}\kappa\cdot\kappa_{\sigma^{2p}}\,\partial_{\sigma}.\label{Lemma3,1}
\end{equation}
Repeated applications of \eqref{Lemma3,1} then gives, for any $m\in\mathbb{N}_{0}$:
\begin{align}
\partial_{t}\kappa_{\sigma^{m}}&=\partial_{\sigma}^{m}\partial_{t}\kappa+\oo{-1}^{p}\sum_{i=0}^{m-1}\partial_{\sigma}^{i}\oo{\kappa\cdot\kappa_{\sigma^{2p}}\cdot\kappa_{\sigma^{m-i}}}\nonumber\\
&=\oo{-1}^{p}\partial_{\sigma}^{m+1}\oo{\mathscr{Q}\kappa_{\sigma^{2p+1}}}+\oo{-1}^{p}\sum_{i=0}^{m}\partial_{\sigma}^{i}\oo{\kappa\cdot\kappa_{\sigma^{2p}}\cdot\kappa_{\sigma^{m-i}}}.\nonumber
\end{align}
Hence
\begin{align}
&\frac{d}{dt}\intmink{\kappa_{\sigma^{m}}^{2}}\nonumber\\
&=2\oo{-1}^{p}\intmink{\kappa_{\sigma^{m}}\partial_{\sigma}^{m+1}\oo{\mathscr{Q}\kappa_{\sigma^{2p+1}}}}+2\oo{-1}^{p}\sum_{i=0}^{m}\intmink{\kappa_{\sigma^{m}}\partial_{\sigma}^{i}\oo{\kappa\cdot\kappa_{\sigma^{2p}}\cdot\kappa_{\sigma^{m-i}}}}\nonumber\\
&\quad+\oo{-1}^{p+1}\intmink{\kappa_{\sigma^{m}}^{2}\cdot\kappa\cdot\kappa_{\sigma^{2p}}}\nonumber\\
&=2\oo{-1}^{p}\intmink{\kappa_{\sigma^{m}}\partial_{\sigma}^{m+1}\oo{\mathscr{Q}\kappa_{\sigma^{2p+1}}}}+2\oo{-1}^{p}\sum_{i=1}^{m}\intmink{\kappa_{\sigma^{m}}\partial_{\sigma}^{i}\oo{\kappa\cdot\kappa_{\sigma^{2p}}\cdot\kappa_{\sigma^{m-i}}}}\nonumber\\
&\quad+\oo{-1}^{p}\intmink{\kappa_{\sigma^{m}}^{2}\cdot\kappa\cdot\kappa_{\sigma^{2p}}}.\label{Lemma3,2}
\end{align}
We have to consider the cases $p>m$ and $p\leq m$ separately in order to use Lemma \ref{AppendixLemma5} correctly. The idea is that we do not want any derivatives of $\kappa$ that are higher than order $m+p$.

If $p>m$, we write $p=m+\alpha$ for some $\alpha\in\mathbb{N}$. The equation \eqref{Lemma3,2} then becomes
\begin{align}
&\frac{d}{dt}\intmink{\kappa_{\sigma^{m}}^{2}}\nonumber\\
&=2\oo{-1}^{p+m+1}\intmink{\mathscr{Q}\kappa_{\sigma^{2m+1}}\cdot\kappa_{\sigma^{m+p+1+\alpha}}}\nonumber\\
&\quad+2\sum_{i=1}^{m}\oo{-1}^{m+i}\intmink{\kappa\cdot\kappa_{\sigma^{m-i}}\cdot\kappa_{\sigma^{m+i}}\cdot\kappa_{\sigma^{m+p+\alpha}}}+\oo{-1}^{p}\intmink{\kappa\cdot\kappa_{\sigma^{m}}^{2}\cdot\kappa_{\sigma^{m+p+\alpha}}}\nonumber\\
&=-2\intmink{\kappa_{\sigma^{m+p+1}}\partial_{\sigma}^{\alpha}\oo{\mathscr{Q}\kappa_{\sigma^{2m+1}}}}+2\sum_{i=1}^{m}\oo{-1}^{m+i+\alpha}\intmink{\kappa_{\sigma^{m+p}}\partial_{\sigma}^{\alpha}\oo{\kappa\cdot\kappa_{\sigma^{m-i}}\cdot\kappa_{\sigma^{m+i}}}}\nonumber\\
&\quad+\oo{-1}^{p+\alpha}\intmink{\kappa_{\sigma^{m+p}}\partial_{\sigma}^{\alpha}\oo{\kappa\cdot\kappa_{\sigma^{m}}^{2}}}\nonumber\\
&=-2\intmink{\mathscr{Q}\kappa_{\sigma^{m+p+1}}^{2}}-2\sum_{j=1}^{\alpha}\binom{\alpha}{j}\intmink{\mathscr{Q}_{\sigma^{j}}\kappa_{\sigma^{m+p+1}}\cdot\kappa_{\sigma^{2m+1+\alpha-j}}}\nonumber\\
&\quad+2\sum_{i=1}^{m}\oo{-1}^{m+i+\alpha}\intmink{\kappa_{\sigma^{m+p}}\partial_{\sigma}^{\alpha}\oo{\kappa\cdot\kappa_{\sigma^{m-i}}\cdot\kappa_{\sigma^{m+i}}}}\\
&\quad+\oo{-1}^{p+\alpha}\intmink{\kappa_{\sigma^{m+p}}\partial_{\sigma}^{\alpha}\oo{\kappa\cdot\kappa_{\sigma^{m}}^{2}}}\nonumber\\
&\leq-2\intmink{\mathscr{Q}\kappa_{\sigma^{m+p+1}}^{2}}-2\sum_{j=1}^{\alpha}\binom{\alpha}{j}\intmink{\mathscr{Q}_{\sigma^{j}}\kappa_{\sigma^{m+p+1}}\cdot\kappa_{\sigma^{m+p+1-j}}}\nonumber\\
&\quad+c\intmink{\norm{P_{4}^{2\oo{m+p},m+p}\oo{\kappa}}}.\label{Lemma3,3}
\end{align}
In a similar manner to \eqref{Proposition1,1}, we treat the cases $j=1,j<1$ separately in the sum on the right hand side of \eqref{Lemma3,3}. For $j=1$ we have
\[
\intmink{\mathscr{Q}_{\sigma}\kappa_{\sigma^{m+p+1}}\cdot\kappa_{\sigma^{m+p}}}=-\intmink{\mathscr{Q}_{\sigma}\kappa_{\sigma^{m+p+1}}\cdot\kappa_{\sigma^{m+p}}}-\intmink{\mathscr{Q}_{\sigma^{2}}\kappa_{\sigma^{m+p}}^{2}},
\]
where we have used integration by parts. Hence
\begin{align}
\intmink{\mathscr{Q}_{\sigma}\kappa_{\sigma^{m+p+1}}\cdot\kappa_{\sigma^{m+p}}}&=-\frac{1}{2}\intmink{\mathscr{Q}_{\sigma^{2}}\kappa_{\sigma^{m+p}}^{2}}\\
&=\sum_{i=1}^{2}\intmink{\tilde{c}_{i}P_{i}^{2-i}\oo{\kappa}\kappa_{\sigma^{m+p}}^{2}}\nonumber\\
&\leq c\sum_{i=1}^{2}\intmink{\norm{P_{i+2}^{2\oo{m+p+1}-i,m+p}\oo{\kappa}}}.\label{Lemma3,4}
\end{align}
For $j>1$ we can integrate by parts once, yielding
\begin{align}
&\intmink{\mathscr{Q}_{\sigma^{j}}\kappa_{\sigma^{m+p+1}}\cdot\kappa_{\sigma^{m+p+1-j}}}\nonumber\\
&=-\intmink{\kappa_{\sigma^{m+p}}\oo{\mathscr{Q}_{\sigma^{j+1}}\kappa_{\sigma^{m+p+1-j}}+\mathscr{Q}_{\sigma^{j}}\kappa_{\sigma^{m+p+2-j}}}}\nonumber\\
&=\sum_{i=1}^{j+1}\intmink{\tilde{c}_{i}\kappa_{\sigma^{m+p}}\kappa_{\sigma^{m+p+1-j}}P_{i}^{j+1-i}\oo{\kappa}}\nonumber\\
&\quad+\sum_{i=1}^{j}\intmink{\tilde{c}_{i}\kappa_{\sigma^{m+p}}\kappa_{\sigma^{m+p+2-j}}P_{i}^{j-i}\oo{\kappa}}\nonumber\\
&\leq c\sum_{i=1}^{j+1}\intmink{\norm{P_{i+2}^{2\oo{m+p+1}-i,m+p}\oo{\kappa}}}.\label{Lemma3,4,2}
\end{align}
Substituting \eqref{Lemma3,4} and \eqref{Lemma3,4,2} into \eqref{Lemma3,3} then yields
\begin{align}
&\frac{d}{dt}\intmink{\kappa_{\sigma^{m}}^{2}}+2\mathscr{Q}_{\star}\intmink{\kappa_{\sigma^{m+p+1}}^{2}}\nonumber\\
&\leq c\sum_{j=1}^{\alpha}\sum_{i=1}^{j+1}\intmink{\norm{P_{i+2}^{2\oo{m+p+1}-i,m+p}\oo{\kappa}}}\nonumber\\
&\leq \varepsilon\intmink{\kappa_{\sigma^{m+p+1}}^{2}}+c\oo{\varepsilon,m,p}\oo{\intmink{\kappa^{2}}}^{2\oo{m+p}+3}.\label{Lemma3,5}
\end{align}
Here we have used \eqref{AppendixLemma5,2} from Lemma \ref{AppendixLemma5} in the last step.

Next, if $p\leq m$ then we write $m=p+\alpha,\,\alpha\in\mathbb{N}\cup\left\{0\right\}$ and equation \eqref{Lemma3,2} becomes
\begin{align}
&\frac{d}{dt}\intmink{\kappa_{\sigma^{m}}^{2}}\nonumber\\
&=2\oo{-1}^{m+p+1}\intmink{\mathscr{Q}\kappa_{\sigma^{2p+1}}\kappa_{\sigma^{2m+1}}}+2\sum_{i=0}^{p}\oo{-1}^{p+i}\intmink{\kappa\cdot\kappa_{\sigma^{m-i}}\kappa_{\sigma^{m+i}}\kappa_{\sigma^{2p}}}\nonumber\\
&\quad+2\sum_{i=p+1}^{p+\alpha}\oo{-1}^{p+i}\intmink{\kappa\cdot\kappa_{\sigma^{m-i}}\kappa_{\sigma^{m+i}}\kappa_{\sigma^{2p}}}+\oo{-1}^{p+1}\intmink{\kappa_{\sigma^{m}}^{2}\cdot\kappa\cdot\kappa_{\sigma^{2p}}}\nonumber\\
&\leq2\oo{-1}^{m+p+1}\intmink{\mathscr{Q}\kappa_{\sigma^{2p+1}}\kappa_{\sigma^{m+p+1+\alpha}}}\nonumber\\
&\quad+2\sum_{i=p+1}^{p+\alpha}\oo{-1}^{p+i}\intmink{\kappa\cdot\kappa_{\sigma^{m-i}}\kappa_{\sigma^{m+p+\oo{i-p}}}\kappa_{\sigma^{2p}}}+\intmink{\norm{P_{4}^{2\oo{m+p},m+p}\oo{\kappa}}}\nonumber\\
&=-2\intmink{\kappa_{\sigma^{m+p+1}}\partial_{\sigma}^{\alpha}\oo{\mathscr{Q}\kappa_{\sigma^{2p+1}}}}+2\sum_{i=p+1}^{p+\alpha}\intmink{\kappa_{\sigma^{m+p}}\partial_{\sigma}^{i-p}\oo{\kappa\cdot\kappa_{\sigma^{m-i}}\cdot\kappa_{\sigma^{2p}}}}\nonumber\\
&\quad+\intmink{\norm{P_{4}^{2\oo{m+p},m+p}\oo{\kappa}}}\nonumber\\
&\leq-2\sum_{i=0}^{\alpha}\binom{\alpha}{i}\intmink{\kappa_{\sigma^{m+p+1}}\mathscr{Q}_{\sigma^{i}}\kappa_{\sigma^{2p+1+\alpha-i}}}+\intmink{\norm{P_{4}^{2\oo{m+p},m+p}\oo{\kappa}}}.\nonumber
\end{align}
Here we have used the fact that for $i\leq p$, $\max\left\{m-i,m+i,2p\right\}\leq m+p$ in the second and last steps. Hence
\begin{align}
&\frac{d}{dt}\intmink{\kappa_{\sigma^{m}}^{2}}+2\mathscr{Q}_{\star}\intmink{\kappa_{\sigma^{m+p+1}}^{2}}\nonumber\\
&\leq-2\sum_{j=1}^{\alpha}\binom{\alpha}{j}\intmink{\kappa_{\sigma^{m+p+1}}\mathscr{Q}_{\sigma^{j}}\kappa_{\sigma^{m+p+1-j}}}+\intmink{\norm{P_{4}^{2\oo{m+p},m+p}\oo{\kappa}}}.\label{Lemma3,6}
\end{align}
Now, we have already dealt with terms in the summation on the right back in \eqref{Lemma3,4} and \eqref{Lemma3,4,2}. Hence \eqref{Lemma3,6} becomes
\begin{align}
&\frac{d}{dt}\intmink{\kappa_{\sigma^{m}}^{2}}+2\mathscr{Q}_{\star}\intmink{\kappa_{\sigma^{m+p+1}}^{2}}\nonumber\\
&\leq c\sum_{j=1}^{\alpha}\sum_{i=1}^{j+1}\intmink{\norm{P_{i+2}^{2\oo{m+p+1}-i,m+p}\oo{\kappa}}}\nonumber\\
&\leq \varepsilon\intmink{\kappa_{\sigma^{m+p+1}}^{2}}+c\oo{\varepsilon,m,p}\oo{\intmink{\kappa^{2}}}^{2\oo{m+p}+3}.\nonumber
\end{align}
Here we have used \eqref{AppendixLemma5,2} from Lemma \ref{AppendixLemma5} in the last step.
Comparing this to \eqref{Lemma3,5} we see that the evolution equation for $\intmink{\kappa_{\sigma^{m}}^{2}}$ can be estimated in the same way, regardless of the sign of $m-p$. We conclude that
\[
\frac{d}{dt}\intmink{\kappa_{\sigma^{m}}^{2}}+\oo{2\mathscr{Q}_{\star}-\varepsilon}\intmink{\kappa_{\sigma^{m+p+1}}^{2}}\leq c\oo{\varepsilon,m,p}\oo{\intmink{\kappa^{2}}}^{2\oo{m+p}+3}.
\]
Choosing $\varepsilon>0$ sufficiently small then gives \eqref{Lemma3,0}. 
\end{proof}

The preceeding lemma allows us to characterise the finite-time singularities for the flows \eqref{APH}. In the same vein as earlier work from Kuwert, Sch\"{a}tzle and Dziuk's paper on the evolution of elastic curves in $\mathbb{R}^{n}$ \cite{Kuwert3}, we show that if our flow becomes extinct in finite time, then we must encounter an $L^{2}$ curvature concentration as we approach the maximal time $T$.
\begin{lem}\label{Lemma5}
Suppose $\Gamma:\mathbb{S}^{1}\times\left[0,T\right)\rightarrow\mathcal{M}^{2}$ is a maximal solution to \eqref{APH}. If $T<\infty$, then
\begin{equation}
\intmink{\kappa^{2}}\Big|_{t}\geq c\oo{T-t}^{-1/2\oo{p+1}}\label{Lemma5,0}
\end{equation} 
for some universal constant $c>0$.
\end{lem}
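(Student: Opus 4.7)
The plan is a standard two-step argument combining the differential inequality of Lemma \ref{Lemma3} with a continuation criterion for the flow.

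First I would show that $\intmink{\kappa^{2}}\to\infty$ as $t\nearrow T$. If instead $\intmink{\kappa^{2}}$ remained bounded by some $M$ on $[0,T)$, then Lemma \ref{Lemma3} at any level $m$ gives the crude bound $\frac{d}{dt}\intmink{\kappa_{\sigma^{m}}^{2}}\leq c(m,p)M^{2(m+p)+3}$, whose right-hand side is constant in time. Integrating inductively in $m$ yields uniform control of every $L^{2}$ norm $\intmink{\kappa_{\sigma^{m}}^{2}}$ on $[0,T)$. Combined with uniform length bounds — $\mathscr{L}$ is non-increasing by Corollary \ref{Corollary1} and bounded below by $\sqrt{4\mathscr{A}(\tilde{\mathcal{I}})\mathscr{A}(\Gamma_{0})}>0$ via the anisotropic isoperimetric inequality together with area conservation — these Sobolev bounds yield uniform $C^{\infty}$ control on $\kappa$ and hence on $\Gamma(\cdot,t)$ up to $t=T$. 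A standard short-time existence argument would then permit smooth extension of the flow past $T$, contradicting the maximality of $T$. Thus $\intmink{\kappa^{2}}\Big|_{t}\to\infty$ as $t\nearrow T$.

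Second, I would apply Lemma \ref{Lemma3} with $m=0$ to obtain the scalar ODE inequality
\[
\dot y\leq c\,y^{2p+3},\qquad y(t):=\intmink{\kappa^{2}}\Big|_{t},\qquad c:=c(0,p).
\]
Separating variables and integrating from $t$ to any $t'\in(t,T)$ produces
\[
\frac{1}{2(p+1)}\left(y(t)^{-2(p+1)}-y(t')^{-2(p+1)}\right)\leq c(t'-t).
\]
Letting $t'\nearrow T$ and invoking the first step so that $y(t')^{-2(p+1)}\to 0$, we obtain $y(t)^{-2(p+1)}\leq 2(p+1)c(T-t)$, which rearranges to
\[
y(t)\geq\bigl[2(p+1)c\bigr]^{-1/2(p+1)}(T-t)^{-1/2(p+1)},
\]
exactly the bound \eqref{Lemma5,0} with the universal constant $c' := [2(p+1)c(0,p)]^{-1/2(p+1)}$.

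The only substantive obstacle is the continuation step: verifying rigorously that a uniform $L^{2}$ bound on $\kappa$ — bootstrapped via Lemma \ref{Lemma3} to uniform $L^{2}$ bounds on all $\kappa_{\sigma^{m}}$, and supplemented by the two-sided length bounds — permits smooth extension of the flow past $T$. This is standard for quasilinear higher-order parabolic geometric evolution equations and is entirely analogous to the arguments in \cite{Kuwert3,parkins2015polyharmonic,Wheeler5}; once granted, the ODE comparison above closes the proof immediately.
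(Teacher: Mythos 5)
Your proposal is correct and follows essentially the same route as the paper: assume boundedness of $\intmink{\kappa^{2}}$, bootstrap via Lemma \ref{Lemma3} to uniform bounds on all derivatives, extend the flow past $T$ to contradict maximality, and then integrate the ODE inequality $\dot y\leq c\,y^{2p+3}$ backwards from the blow-up to obtain \eqref{Lemma5,0}. The continuation step you defer to ``standard arguments'' is exactly what the paper's proof spends most of its length on (the Kuwert--Sch\"atzle--Dziuk-style bootstrap from $L^{2}$ bounds on $\kappa_{\sigma^{m}}$ to $C^{\infty}$ bounds on $\Gamma$ in the fixed parametrisation), while your explicit separation-of-variables computation makes precise what the paper compresses into ``the result then follows immediately from Lemma \ref{Lemma3} with $m=0$.''
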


\begin{proof}
By Lemma \ref{Lemma3}, it will be enough to prove that $\limsup_{t\nearrow T}\intmink{\kappa^{2}}=\infty$. We assume for the sake of contradiction that $\intmink{\kappa^{2}}\leq\varrho<\infty$ for all $t<T$. Our aim is to show that this assumption implies that our one-parameter family of solutions is smooth right up to the maximal time of existence $T$. Local existence results will then allow us to extend the flow smoothly past time $T$ to an interval $\left[0,T+\delta\right)$, contradicting the maximality of $T$. As mentioned above, the argument is essentially an extension of a result from \cite{Kuwert3}, but has been included for the convenience of the reader.
\\\\
Now, by \eqref{Lemma3,0} we have $\frac{d}{dt}\intmink{\kappa_{{\sigma}^{m}}^{2}}\leq c\cdot\varrho^{2\oo{m+p}+3}$ for any $m\in\mathbb{N}_{0}$. Hence
\[
\intmink{\kappa_{{\sigma}^{m}}^{2}}\leq\intmink{\kappa_{{\sigma}^{m}}^{2}}\Big|_{t=0}+c\cdot\varrho^{2\oo{m+p}+3}T\leq c_{m}\oo{\Gamma_{0},\varrho,T}.
\]
Combining this with Lemma \ref{AppendixLemma2}, we have for $m\geq1$:
\begin{equation}
\llll{\kappa_{\sigma^{m}}}_{\infty}^{2}\leq\frac{\mathscr{L}}{2\pi}\intmink{\kappa_{{\sigma}^{m+1}}^{2}}\leq\frac{\mathscr{L}\oo{\Gamma_{0}}}{2\pi}\cdot c_{m+1}\oo{\Gamma_{0},\varrho,T}\leq d_{m}\oo{\Gamma_{0},\varrho,T},\label{Lemma5,1}
\end{equation}
where $d_{m}$ is a new universal constant. Since we intending to show that $\llll{\partial_{u}^{m}\Gamma}_{\infty}<c_{m}$ for every $m\in\mathbb{N}$, we start by showing that
\begin{equation}
\llll{\partial_{\sigma}^{m}\Gamma}_{\infty}<\tilde{c}_{m}\oo{\Gamma_{0},\varrho,T}\,\,\text{for every}\,\,m\in\mathbb{N},\label{Lemma5,2}
\end{equation}  
and proceed from there. Assume that \eqref{Lemma5,2} is true for $m=1,2,\dots,k$. Then for $m=k+1$, multiple applications of the commutator formula \eqref{Lemma3,1} gives
\begin{align}
\partial_{t}\partial_{\sigma}^{k+1}\Gamma&=\partial_{\sigma}^{k+1}\partial_{t}\Gamma-\sum_{i=0}^{k}\partial_{\sigma}^{i}\oo{\kappa\cdot\kappa_{\sigma\sigma}\partial_{\sigma}^{k+1-i}\Gamma}\nonumber\\
&=-\partial_{\sigma}^{k+1}\oo{\kappa_{\sigma\sigma}N}-\sum_{i=0}^{k}\partial_{\sigma}^{i}\oo{\kappa\cdot\kappa_{\sigma\sigma}\partial_{\sigma}^{k+1-i}\Gamma}\nonumber.
\end{align}
This implies that
\begin{equation}
\norm{\partial_{t}\partial_{\sigma}^{k+1}\Gamma}\leq c\oo{1+\norm{\partial_{\sigma}^{k+1}\Gamma}}.\label{Lemma5,4}
\end{equation}
The constant $c$ here depends only on the $d_{m}$ constants from \eqref{Lemma5,1}, along with the $\tilde{c}_{m},m=1,2\dots k$ from \eqref{Lemma5,2} and $\partial\mathcal{U}$. We have also used the identity
\[
\llll{\partial_{\sigma}^{m}N}_{\infty}\leq c\oo{h,h_{\theta},\dots,h_{\theta^{m+1}},\kappa,\kappa_{\sigma},\dots,\kappa_{\sigma^{m+1}}}\leq c\oo{\Gamma_{0},\varrho,T,\partial\mathcal{U}},
\]
which can be attained inductively by combining the Frenet equations \eqref{MinkowskiFrenet2} and identity \eqref{Q1}.  Using the Kato inequality on \eqref{Lemma5,4} gives
\begin{equation}
\norm{\partial_{t}\norm{\partial_{\sigma}^{k+1}\Gamma}}\leq c\oo{1+\norm{\partial_{\sigma}^{k+1}\Gamma}}.\nonumber
\end{equation}
Our previous inequality gives 
\[
d\ln\oo{1+\norm{\partial_{\sigma}^{k+1}\Gamma}}\leq c\,dt,
\]
which implies that for any $t\in\left[0,T\right)$ we have the following estimate:
\[
\llll{\partial_{\sigma}^{k+1}\Gamma}\Big|_{t}\leq \oo{1+\llll{\partial_{\sigma}^{k+1}\Gamma}\Big|_{0}}e^{ct}-1<\oo{1+\llll{\partial_{\sigma}^{k+1}\Gamma}\Big|_{0}}e^{cT}-1<\tilde{c}_{k+1}\oo{\varrho,\Gamma_{0},T}.
\]
This completes the inductive step and proves \eqref{Lemma5,2}. To jump from \eqref{Lemma5,2} to proving that
\begin{equation}
\llll{\partial_{u}^{m}\Gamma}_{\infty}<c_{m}\oo{\varrho,\Gamma_{0},T,\partial\mathcal{U}}\,\,\text{for every}\,\,m\in\mathbb{N},\label{Lemma5,5}
\end{equation}
we first employ the identities $\partial_{s}=\frac{\kappa}{h+h_{\theta\theta}}\partial_{\theta}$ and $\partial_{s}=h\partial_{\sigma}$, which combined with \eqref{Lemma5,1} and \eqref{Lemma5,2} gives
\begin{equation}
\llll{\partial_{s}^{m}\Gamma}_{\infty}\leq c\oo{h,\dots, h_{\theta^{m}},\kappa,\dots,\kappa_{\sigma^{m-1}}}\sum_{i=1}^{m}\llll{\partial_{\sigma}^{m}\Gamma}_{\infty}<\bar{c}_{m}\oo{\varrho,\Gamma_{0},T,\partial\mathcal{U}}\label{Lemma5,6}
\end{equation}
for every $m\in\mathbb{N}$. Here $\bar{c}_{m}$ is a new universal constant. Applying the same reasoning to \eqref{Lemma5,1} above also gives
\begin{equation}
\llll{\kappa_{s^{m}}}_{\infty}\leq c\oo{\Gamma_{0},\varrho,T,\partial\mathcal{U}}.\label{Lemma5,7}
\end{equation}

Next by following the inductive argument used in the proof of Theorem 3.1 from \cite{Kuwert3}, we have 
\[
\llll{\partial_{u}^{m}\norm{\Gamma_{u}}}_{\infty}\leq c\oo{\varrho,\Gamma_{0},T,\partial\mathcal{U}}\,\,\text{for every}\,\,m\in\mathbb{N}.
\] 
Unfortunately, applying the Kato inequality on this result does not give \eqref{Lemma5,5} because the inequality is the opposite to what we need. Luckily, following along in the inductive argument of \cite{Kuwert3} gives
\begin{equation}
\llll{\partial_{u}^{m}\Gamma}_{\infty}\leq c\llll{\Gamma_{u}}_{\infty}\llll{\partial_{s}^{m}\Gamma}_{\infty}+\llll{P}_{\infty}.\nonumber
\end{equation}
where $P=P\oo{\norm{\Gamma_{u}},\partial_{u}\norm{\Gamma_{u}},\dots,\partial_{u}^{m-1}\norm{\Gamma_{u}},\kappa,\kappa_{s},\dots,\kappa_{s^{m+1}},\partial\mathcal{U}}$ is a polynomial. Applying \eqref{Lemma5,6} and \eqref{Lemma5,7} to this inequality then proves identity \eqref{Lemma5,5}. Therefore $\Gamma\oo{\cdot,t}$ is smooth right up until time $T$, and hence can be extended to some larger time interval $\left[0,T+\delta\right)$. This contradicts the maximality of $T$. Hence the assumption that $\limsup_{t\nearrow T}\intmink{\kappa^{2}}<\infty$ must have been incorrect, and we conclude that 
\[
\limsup_{t\nearrow T}\intmink{\kappa^{2}}=\infty. 
\]
The result then follows immediately from Lemma \ref{Lemma3} with $m=0$. 
\end{proof}

\section{Global analysis of the flow}

The characterisation of finite-time singularities for the flow \eqref{APH}
above allows us to prove the first part of our main theorem: long-time
existence ($T=\infty$).
\begin{thm}\label{Theorem1}
Suppose that $\Gamma:\mathbb{S}^{1}\times\left[0,T\right)\rightarrow\mathcal{M}^{2}$ solves \eqref{APH}. Additionally, suppose that $\Gamma_{0}$ is a simple closed curve satisfying
\begin{equation}
\kosc\oo{\Gamma_{0}}\leq\mathcal{K}^{\star}\,\,\text{and}\,\,\mathscr{I}\oo{\Gamma_{0}}\leq\exp\oo{\mathcal{K}^{\star}/8\mathscr{A}(\tilde{\mathcal{I}})^{2}}.\nonumber
\end{equation}
Then $T=\infty$.
\end{thm}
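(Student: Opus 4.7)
The plan is to argue by contradiction and invoke the blow-up characterisation of Lemma \ref{Lemma5}. Suppose $T<\infty$. The proof reduces to establishing a uniform upper bound on $\intmink{\kappa^{2}}$ along $[0,T)$, which directly contradicts \eqref{Lemma5,0}.

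First I would apply Proposition \ref{Proposition2}, whose hypotheses coincide exactly with those assumed for $\Gamma_{0}$, to conclude that
\[
\kosc\oo{\Gamma_{t}} = \mathscr{L}\oo{\Gamma_{t}}\intmink{\kave^{2}} \leq 2\mathcal{K}^{\star} \quad \text{for all } t\in[0,T).
\]
Next, I would decompose $\intmink{\kappa^{2}} = \intmink{\kave^{2}} + \mathscr{L}\,\bar{\kappa}^{2}$. From Corollary \ref{Corollary1} the total Minkowski curvature $\intmink{\kappa} = 2\mathscr{A}(\tilde{\mathcal{I}})$ is preserved, so $\mathscr{L}\bar{\kappa}^{2} = 4\mathscr{A}(\tilde{\mathcal{I}})^{2}/\mathscr{L}$. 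Combined with the previous inequality this yields
\[
\intmink{\kappa^{2}} \leq \frac{2\mathcal{K}^{\star} + 4\mathscr{A}(\tilde{\mathcal{I}})^{2}}{\mathscr{L}\oo{\Gamma_{t}}}.
\]

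The remaining issue, which is the only substantive step, is to bound $\mathscr{L}\oo{\Gamma_{t}}$ from below. This is where the preservation of enclosed area (Corollary \ref{Corollary1}) together with the anisotropic isoperimetric inequality \eqref{IsoperimetricRatio1}--\eqref{IsoperimetricRatio2} is decisive: since $\mathscr{I}\oo{\Gamma_{t}}\geq 1$ and $\mathscr{A}\oo{\Gamma_{t}} = \mathscr{A}\oo{\Gamma_{0}}$, we obtain
\[
\mathscr{L}\oo{\Gamma_{t}}^{2} \geq 4\mathscr{A}\oo{\Gamma_{0}}\mathscr{A}(\tilde{\mathcal{I}}),
\]
so that $\mathscr{L}\oo{\Gamma_{t}}\geq 2\sqrt{\mathscr{A}\oo{\Gamma_{0}}\mathscr{A}(\tilde{\mathcal{I}})}=:\mathscr{L}_{\star}>0$ uniformly. (Note that the hypothesis $\mathscr{A}\oo{\Gamma_{0}}>0$ implicit in the definition of $\mathscr{I}$ for $\Gamma_{0}$ is essential here.)

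Combining these estimates gives the uniform bound
\[
\intmink{\kappa^{2}}\Big|_{t} \leq \frac{2\mathcal{K}^{\star} + 4\mathscr{A}(\tilde{\mathcal{I}})^{2}}{\mathscr{L}_{\star}} \quad \text{for all } t\in[0,T).
\]
On the other hand, if $T<\infty$ then Lemma \ref{Lemma5} forces $\intmink{\kappa^{2}}\big|_{t}\geq c\oo{T-t}^{-1/2\oo{p+1}}\to\infty$ as $t\nearrow T$, which is incompatible with the uniform bound just obtained. Hence $T=\infty$. I do not expect any genuine obstacle: every tool required has been assembled in Proposition \ref{Proposition2}, Corollary \ref{Corollary1}, and Lemma \ref{Lemma5}, and the argument amounts to chaining them with the isoperimetric inequality.
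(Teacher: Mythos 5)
Your proposal is correct and is essentially the paper's argument run in the contrapositive direction: the paper uses the identity $\kosc=\mathscr{L}\intmink{\kappa^{2}}-4\mathscr{A}(\tilde{\mathcal{I}})^{2}$ together with the isoperimetric lower bound on $\mathscr{L}$ to show that blow-up of $\intmink{\kappa^{2}}$ forces $\kosc\to\infty$, contradicting Proposition \ref{Proposition2}, while you use the same identity and the same bounds to convert Proposition \ref{Proposition2} into a uniform bound on $\intmink{\kappa^{2}}$ contradicting Lemma \ref{Lemma5}. The two are logically equivalent and rely on exactly the same ingredients.
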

\begin{proof}
Suppose for the sake of contradiction that $T<\infty$. Then by Lemma \ref{Lemma5} we have
\[
\intmink{\kappa^{2}}\nearrow\infty\,\,\text{as}\,\,t\nearrow T.
\]
This implies that $\kosc$ must diverge as we approach time $T$ as well, since
\[
\kosc\oo{\Gamma}=\mathscr{L}\intmink{\kappa^{2}}-4\mathscr{A}(\tilde{\mathcal{I}})^{2}\geq\sqrt{4\mathscr{A}\oo{\Gamma_{0}}\,\mathscr{A}(\tilde{\mathcal{I}})}\intmink{\kappa^{2}}-4\mathscr{A}(\tilde{\mathcal{I}})^{2}.
\]
Here the last step follows from the isoperimetric inequality. This result directly contradicts Proposition \ref{Proposition2} where we showed that $\kosc$ remained bounded. We conclude that the assumption that $T<\infty$ must have been false. The result then follows.
\end{proof}
Recall we know that if $\Gamma:\mathbb{S}^{1}\times\left[0,T\right)\rightarrow\mathcal{M}^{2}$ is a $2\oo{p+1}$-anisotropic polyharmonic curve flow and satisfies the hypothesis of Theorem \ref{Theorem1}, then $T=\infty$. Hence identity \eqref{KoscBounded} then implies that under these same assumptions we have
\begin{equation}
\kosc\in L^{1}\oo{\left[0,\infty\right)},\,\,\text{with}\,\,\int_{0}^{\infty}{\kosc(\tau)\,d\tau}\leq \frac{1}{2(p+1)(2\pi)^{2p}}\mathscr{L}^{2(p+1)}\oo{\Gamma_{0}}<\infty.\label{KoscL1}
\end{equation}
So we can conclude that the quantity $\kosc$ is approaching zero at an $\varepsilon$-dense set for sufficiently large times. 
However, at the present time we have not ruled out the possibility that $\kosc$ gets smaller and smaller as $t$ gets large, whilst vibrating with higher and higher frequency, remaining in $L^{1}\oo{\left[0,\infty\right)}$ whilst never actually fully dissipating to zero. To rule out this from happening, it is enough to show that $\norm{\frac{d}{dt}\kosc}$ remains bounded by a universal constant for all time. To do so we will need to first show that $\llll{\kappa_{\sigma^{p}}}_{2}^{2}$ remains bounded. We will address this issue with the following proposition.

\begin{prop}\label{LongTimeProp1}
Suppose $\gamma:\mathbb{S}^{1}\times\left[0,T\right)\rightarrow\mathbb{R}^{2}$ solves \eqref{APH} and that $\Gamma_{0}$ has positive enclosed area. There exists a $\varepsilon_{0}>0$ (with $\varepsilon_{0}\leq\mathcal{K}^{\star}$) such that if
\[
\kosc\oo{\Gamma_{0}}<\varepsilon_{0}\,\,\text{and}\,\,I\oo{0}<\exp\oo{\varepsilon_{0}/8\mathscr{A}(\tilde{\mathcal{I}})^{2}},
\]
then $\llll{k_{s^{p}}}_{2}^{2}$ remains bounded for all time. In particular
\[
\intmink{\kappa_{\sigma^{p}}^{2}}\leq c\oo{\Gamma_{0}},
\]
and so $\llll{k_{s^{p}}}_{2}^{2}$ can be controlled a priori.
\end{prop}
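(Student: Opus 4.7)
The plan is to combine long-time existence and the uniform $\kosc$ bound from the preceding section with the a priori growth rate from Lemma \ref{Lemma3}, and then invoke an elementary ODE argument using the $L^{1}$-integrability of $\intmink{\kappa_{\sigma^{p}}^{2}}$ that comes for free from the length monotonicity. Choosing $\varepsilon_{0}\leq\mathcal{K}^{\star}$ at the outset, Theorem \ref{Theorem1} gives $T=\infty$ and Proposition \ref{Proposition2} gives $\kosc(\Gamma_{t})\leq 2\mathcal{K}^{\star}$ for all $t\in[0,\infty)$.

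The first substantive step is to upgrade the $\kosc$ bound to a uniform a priori bound on $\intmink{\kappa^{2}}$. Using the decomposition
\[
\intmink{\kappa^{2}} = \frac{\kosc(\Gamma)}{\mathscr{L}(\Gamma)} + \mathscr{L}(\Gamma)\,\bar{\kappa}^{2},
\]
I would combine: (i) the upper bound $\kosc\leq 2\mathcal{K}^{\star}$ just established; (ii) the monotonicity $\frac{d}{dt}\mathscr{L}\leq 0$ from Corollary \ref{Corollary1} and the anisotropic isoperimetric inequality together with area conservation $\mathscr{A}(\Gamma_{t})=\mathscr{A}(\Gamma_{0})$, giving the uniform lower bound $\mathscr{L}(\Gamma_{t})\geq \sqrt{4\,\mathscr{A}(\Gamma_{0})\,\mathscr{A}(\tilde{\mathcal{I}})}$; and (iii) the identity $\mathscr{L}\bar{\kappa}=2\mathscr{A}(\tilde{\mathcal{I}})$, which shows that $\mathscr{L}\bar{\kappa}^{2}=4\mathscr{A}(\tilde{\mathcal{I}})^{2}/\mathscr{L}$ is bounded by the same reasoning. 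Together these produce a constant $K=K(\Gamma_{0},\partial\mathcal{U})$ with $\intmink{\kappa^{2}}\leq K$ for all $t\geq 0$.

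Applying Lemma \ref{Lemma3} with $m=p$ and substituting this uniform bound then gives
\[
\frac{d}{dt}\intmink{\kappa_{\sigma^{p}}^{2}} \leq c(p)\,K^{4p+3} =: C^{\star}(\Gamma_{0},\partial\mathcal{U},p).
\]
On the other hand, integrating the length identity $\frac{d}{dt}\mathscr{L}=-\intmink{\kappa_{\sigma^{p}}^{2}}$ from Corollary \ref{Corollary1} over $[0,\infty)$ yields
\[
\int_{0}^{\infty}\intmink{\kappa_{\sigma^{p}}^{2}}\,d\tau = \mathscr{L}(\Gamma_{0}) - \lim_{t\to\infty}\mathscr{L}(\Gamma_{t}) \leq \mathscr{L}(\Gamma_{0}) < \infty.
\]

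The conclusion is then an elementary ODE observation: if $f\geq 0$ satisfies $f'\leq C^{\star}$ pointwise and $f\in L^{1}([0,\infty))$, then $f$ is bounded. Indeed, were $f(t_{n})\to\infty$ along some sequence $t_{n}$, the bound $f(t)\geq f(t_{n})/2$ on $[t_{n},t_{n}+f(t_{n})/(2C^{\star})]$ would force $\int_{t_{n}}^{t_{n}+f(t_{n})/(2C^{\star})} f\,d\tau\geq f(t_{n})^{2}/(4C^{\star})\to\infty$, contradicting integrability. Applying this to $f(t)=\intmink{\kappa_{\sigma^{p}}^{2}}$ delivers the claim. The main subtlety is really the uniform control on $\intmink{\kappa^{2}}$: without it, the right-hand side of Lemma \ref{Lemma3} could in principle grow in time and the ODE argument would fail. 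Once that ingredient is secured, everything else is a repackaging of estimates already in hand, and no smallness on $\varepsilon_{0}$ beyond that required to invoke Proposition \ref{Proposition2} is needed.
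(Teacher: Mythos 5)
Your argument is essentially correct but follows a genuinely different route from the paper. The paper does not invoke Lemma \ref{Lemma3} here at all: it returns to the evolution identity \eqref{Lemma3,2} with $m=p$, keeps the coercive term $2\mathscr{Q}_{\star}\intmink{\kappa_{\sigma^{2p+1}}^{2}}$, estimates the cubic and quartic error terms by $c\oo{\kosc+\sqrt{\kosc}}\intmink{\kappa_{\sigma^{2p+1}}^{2}}$ via Lemma \ref{AppendixLemma5}, absorbs the $\bar{\kappa}^{2}\intmink{\kappa_{\sigma^{2p}}^{2}}$ contribution with Lemma \ref{AppendixLemma0}, and arrives at
\[
\frac{d}{dt}\intmink{\kappa_{\sigma^{p}}^{2}}+\oo{2\mathscr{Q}_{\star}-c\kosc-c\sqrt{\kosc}-8\mathscr{A}(\tilde{\mathcal{I}})^{2}\varepsilon}\intmink{\kappa_{\sigma^{2p+1}}^{2}}\leq c\,\varepsilon^{-2p}\mathscr{L}^{-4p-3}\kosc,
\]
which it then integrates using $\kosc\in L^{1}\oo{\left[0,\infty\right)}$ and the isoperimetric lower bound on $\mathscr{L}$. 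That is where the smallness of $\varepsilon_{0}$ is genuinely exercised (to keep the bracket nonnegative), and the retained good term is recycled for the exponential decay in Theorem \ref{Theorem3}. Your route instead combines the unconditional growth bound of Lemma \ref{Lemma3} (fed by the identity $\mathscr{L}\intmink{\kappa^{2}}=\kosc+4\mathscr{A}(\tilde{\mathcal{I}})^{2}$, the bound $\kosc\leq2\mathcal{K}^{\star}$, and $\mathscr{L}\geq\sqrt{4\mathscr{A}\oo{\Gamma_{0}}\mathscr{A}(\tilde{\mathcal{I}})}$, all correctly assembled) with the $L^{1}$-in-time bound on $\intmink{\kappa_{\sigma^{p}}^{2}}$ coming from the length monotonicity. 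This is more elementary, requires no smallness of $\varepsilon_{0}$ beyond what Proposition \ref{Proposition2} already needs, but produces a cruder constant and no reusable coercive term.

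One repair is needed in your closing ODE step: from $f'\leq C^{\star}$ alone you cannot conclude $f\geq f(t_{n})/2$ on the \emph{forward} interval $\left[t_{n},t_{n}+f(t_{n})/(2C^{\star})\right]$, since a one-sided upper bound on $f'$ permits $f$ to drop arbitrarily fast after $t_{n}$. The inequality propagates backward in time: for $s\leq t_{n}$ one has $f(s)\geq f(t_{n})-C^{\star}\oo{t_{n}-s}$, hence $f\geq f(t_{n})/2$ on $\left[t_{n}-f(t_{n})/(2C^{\star}),t_{n}\right]$ (intersected with $\left[0,\infty\right)$), and the same contradiction with $f\in L^{1}$ follows. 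With that direction corrected, your proof is complete.
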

\begin{proof}
Applying \eqref{Lemma3,2} with $m=p$ gives
\begin{align}
&\frac{d}{dt}\intmink{\kappa_{\sigma^{p}}^{2}}+2\mathscr{Q}_{\star}\intmink{\kappa_{\sigma^{2p+1}}^{2}}\nonumber\\
&\leq 2\oo{-1}^{p}\sum_{i=1}^{p}\intmink{\kappa_{\sigma^{p}}\partial_{\sigma}^{i}\oo{\kappa\cdot\kappa_{\sigma^{p-i}}\cdot\kappa_{\sigma^{2p}}}}+\oo{-1}^{p}\intmink{\kappa_{\sigma^{p}}^{2}\cdot\kappa\cdot\kappa_{\sigma^{2p}}}\nonumber\\
&=2\sum_{i=1}^{p}\oo{-1}^{p+i}\intmink{\kappa\cdot\kappa_{\sigma^{p-i}}\cdot\kappa_{\sigma^{p+i}}\cdot\kappa_{\sigma^{2p}}}+\oo{-1}^{p}\intmink{\kappa_{\sigma^{p}}^{2}\cdot\kappa\cdot\kappa_{\sigma^{2p}}}\nonumber\\
&=2\sum_{i=1}^{p}\oo{-1}^{p+i}\intmink{\oo{\kappa-\bar{\kappa}+\bar{\kappa}}\kave_{\sigma^{p-i}}\kave_{\sigma^{p+i}}\kave_{\sigma^{2p}}}\nonumber\\
&+2\intmink{\kappa^{2}\kave_{\sigma^{2p}}^{2}}+\oo{-1}^{p}\intmink{\kappa_{\sigma^{p}}^{2}\cdot\kappa\cdot\kappa_{\sigma^{2p}}}\nonumber\\
&\leq2\intmink{\kappa^{2}\kave_{\sigma^{2p}}^{2}}+\intmink{\norm{P_{4}^{4p,2p}\kave}}+\mathscr{L}^{-1}\intmink{\norm{P_{3}^{4p,2p}\kave}}.\label{LongTimeProp1,1}
\end{align}
Next, from the last result of Lemma \ref{AppendixLemma5} as well as Lemma \ref{AppendixLemma1},  we have
\begin{align}
&\intmink{\norm{P_{4}^{4p,2p}\kave}}\nonumber\\
&\leq c\mathscr{L}^{1-4\oo{p+1}}\oo{\kosc}^{1+\frac{1}{2\oo{2p+1}}}\oo{\mathscr{L}^{2\oo{2p+1}}\intmink{\kappa_{\sigma^{2p+1}}^{2}}}^{1-\frac{1}{2\oo{2p+1}}}\nonumber\\
&\leq c\mathscr{L}^{1-4\oo{p+1}}\kosc\oo{\mathscr{L}^{2\oo{2p+1}}\intmink{\kappa_{\sigma^{2p+1}}^{2}}}^{\frac{1}{2\oo{2p+1}}}\oo{\mathscr{L}^{2\oo{2p+1}}\intmink{\kappa_{\sigma^{2p+1}}^{2}}}^{1-\frac{1}{2\oo{2p+1}}}\nonumber\\
&=c\kosc\intmink{\kappa_{\sigma^{2p+1}}^{2}},\nonumber
\end{align}
as well as
\begin{align}
&\mathscr{L}^{-1}\intmink{\norm{P_{3}^{4p,2p}\kave}}\nonumber\\
&\leq\mathscr{L}^{1-4\oo{p+1}}\oo{\kosc}^{\frac{1}{2}+\frac{3}{4\oo{2p+1}}}\oo{\mathscr{L}^{2\oo{2p+1}}\intmink{\kappa_{\sigma^{2p+1}}^{2}}}^{1-\frac{3}{4\oo{2p+1}}}\nonumber\\
&\leq c\mathscr{L}^{1-4\oo{p+1}}\sqrt{\kosc}\oo{\mathscr{L}^{2\oo{2p+1}}\intmink{\kappa_{\sigma^{2p+1}}^{2}}}^{\frac{3}{4\oo{2p+1}}}\oo{\mathscr{L}^{2\oo{2p+1}}\intmink{\kappa_{\sigma^{2p+1}}^{2}}}^{1-\frac{3}{4\oo{2p+1}}}\nonumber\\
&=c\sqrt{\kosc}\intmink{\kappa_{\sigma^{2p+1}}^{2}}.\nonumber
\end{align}
Hence \eqref{LongTimeProp1,1} becomes
\begin{equation}
\frac{d}{dt}\intmink{\kappa_{\sigma^{p}}^{2}}+\oo{2\mathscr{Q}_{\star}-c\kosc-c\sqrt{\kosc}}\intmink{\kappa_{\sigma^{2p+1}}^{2}}\leq2\intmink{\kappa^{2}\kave_{\sigma^{2p}}^{2}}.\label{LongTimeProp1,2}
\end{equation}
The term on the right can be estimated easily by using our earlier $P$-style estimates, along with Lemma \ref{AppendixLemma0}:
\begin{align}
&2\intmink{\kappa^{2}\kave_{\sigma^{2p}}^{2}}\nonumber\\
&=2\intmink{\oo{\kave^{2}+\bar{\kappa}^{2}+2\bar{\kappa}\kave}\kave_{\sigma^{2p}}^{2}}\nonumber\\
&\leq 2\bar{\kappa}^{2}\intmink{\kappa_{\sigma^{2p}}^{2}}+\intmink{\norm{P_{4}^{4p,2p}\kave}}+\mathscr{L}^{-1}\intmink{\norm{P_{3}^{4p,2p}\kave}}\nonumber\\
&\leq\frac{8\mathscr{A}(\tilde{\mathcal{I}})^{2}}{\mathscr{L}^{2}}\intmink{\kappa_{\sigma^{2p}}^{2}}+c\oo{\kosc+\sqrt{\kosc}}\intmink{\kappa_{\sigma^{2p+1}}^{2}}\nonumber\\
&\leq\frac{8\mathscr{A}(\tilde{\mathcal{I}})^{2}}{\mathscr{L}^{2}}\oo{\varepsilon\mathscr{L}^{2}\intmink{\kappa_{\sigma^{2p+1}}^{2}}+\frac{1}{4\varepsilon^{2p}}\mathscr{L}^{-4p-1}\kosc}\nonumber\\
&\quad+c\oo{\kosc+\sqrt{\kosc}}\intmink{\kappa_{\sigma^{2p+1}}^{2}}\nonumber\\
&=8\mathscr{A}(\tilde{\mathcal{I}})^{2}\varepsilon\intmink{\kappa_{\sigma^{2p+1}}^{2}}+c\oo{\kosc+\sqrt{\kosc}}\intmink{\kappa_{\sigma^{2p+1}}^{2}}\nonumber\\
&\quad+\frac{4\mathscr{A}(\tilde{\mathcal{I}})^{2}}{2\varepsilon^{2p}}\mathscr{L}^{-4p-3}\kosc.\nonumber
\end{align}
Substituting into \eqref{LongTimeProp1,2}, we have
\begin{multline}
\frac{d}{dt}\intmink{\kappa_{\sigma^{p}}^{2}}+\oo{2\mathscr{Q}_{\star}-c\kosc-c\sqrt{\kosc}-8\mathscr{A}(\tilde{\mathcal{I}})^{2}\varepsilon}\intmink{\kappa_{\sigma^{2p+1}}^{2}}\\
\leq\frac{2\mathscr{A}(\tilde{\mathcal{I}})^{2}}{\varepsilon^{2p}}\mathscr{L}^{-4p-3}\kosc.\nonumber
\end{multline}
Choosing $\varepsilon$ sufficiently small and integrating over $\left[0,t\right)$ for $t\leq T$ gives
\[
\intmink{\kappa_{\sigma^{p}}^{2}}\Big|_{t}\leq c\int_{0}^{t}{\mathscr{L}^{-4p-3}\kosc\,d\tau}\leq c\oo{4\mathscr{A}\oo{\Gamma_{0}}\,\mathscr{A}(\tilde{\mathcal{I}})}^{-\frac{4p+3}{2}}\int_{0}^{t}{\kosc\,d\tau}\leq c\oo{\Gamma_{0},p}.
\]
Here we have used the isoperimetric inequality in the penultimate step. This completes the proof.
\end{proof}

\begin{thm}\label{Theorem2}
Suppose $\Gamma:\mathbb{S}^{1}\times\left[0,T\right)\rightarrow\mathcal{M}^{2}$ solves \eqref{APH} and that $\Gamma_{0}$ is simple with positive enclosed area. There exists a $\varepsilon_{0}>0$ (with $\varepsilon_{0}\leq\mathcal{K}^{\star}$) such that if
\[
\kosc\oo{\Gamma_{0}}<\varepsilon_{0}\,\,\text{and}\,\,\mathscr{I}\oo{\Gamma_{0}}<\exp\oo{\varepsilon_{0}/8\mathscr{A}(\tilde{\mathcal{I}})^{2}},
\]
then $\Gamma\oo{\mathbb{S}^{1}}$ approaches a homothetic rescaling of the isoperimetric $\partial\mathcal{U}$ with area equal to $\mathscr{A}\oo{\Gamma_{0}}$. 
\end{thm}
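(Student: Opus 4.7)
The plan is to bootstrap the uniform smallness of $\kosc$ established in Proposition \ref{Proposition2} to smooth convergence of $\Gamma(\cdot,t)$ to a homothetic rescaling of $\tilde{\mathcal{I}}$. The argument splits into three stages: uniform a-priori higher-order estimates, decay of $\kosc$ to zero, and identification of subsequential limits as rescaled isoperimetrices.

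First, Theorem \ref{Theorem1} gives $T=\infty$, and Proposition \ref{LongTimeProp1} provides a uniform bound $\intmink{\kappa_{\sigma^p}^2}\le c(\Gamma_0)$ on $[0,\infty)$. Combined with the conserved quantity $\intmink{\kappa}=2\mathscr{A}(\tilde{\mathcal{I}})$ and the interpolation inequalities of Lemma \ref{AppendixLemma1}, this bounds $\intmink{\kappa^2}$ uniformly in time. Iterating Lemma \ref{Lemma3} starting from $m=0$ then yields uniform-in-time bounds on $\intmink{\kappa_{\sigma^m}^2}$ for every $m\ge0$, and Lemma \ref{AppendixLemma2} upgrades these to uniform $C^k$ bounds on $\kappa$ and its derivatives.

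Second, I would show $\kosc(t)\to0$. We already know $\kosc\in L^{1}([0,\infty))$ by \eqref{KoscL1}, so it suffices to bound $\tfrac{d}{dt}\kosc$ uniformly. Proposition \ref{Proposition1} writes $\tfrac{d}{dt}\kosc$ as a combination of integrals polynomial in $\kappa$ and its derivatives, and all of these are now uniformly controlled by stage one. Hence $\kosc(t)\to0$. Since the anisotropic isoperimetric inequality together with area preservation (Corollary \ref{Corollary1}) yields $\mathscr{L}(t)\ge 2\sqrt{\mathscr{A}(\Gamma_0)\mathscr{A}(\tilde{\mathcal{I}})}$, this translates into $\llll{\kappa-\bar\kappa}_{2}^{2}\to0$, and interpolation against the uniform higher-derivative bounds upgrades this to $\kappa-\bar\kappa\to0$ in every $C^{k}$ norm.

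Third, the monotone decay of $\mathscr{L}$ (Corollary \ref{Corollary1}) and the isoperimetric inequality give two-sided uniform bounds on $\mathscr{L}$, and hence on $\bar\kappa=2\mathscr{A}(\tilde{\mathcal{I}})/\mathscr{L}$. After fixing the centroid of $\Gamma(\cdot,t)$, Arzela--Ascoli applied with the uniform smooth bounds extracts subsequential smooth limits $\Gamma_\infty$ having $\kappa\equiv $ const. Since $\kappa=k\,(h+h_{\theta\theta})$, this forces the Euclidean curvature of $\Gamma_\infty$ to be proportional to $\hat{k}=(h+h_{\theta\theta})^{-1}$, and by \eqref{EuclideanCurvature} this characterises $\Gamma_\infty$ as a homothetic rescaling of $\tilde{\mathcal{I}}$; area preservation pins down the scaling factor via $\mathscr{A}(\Gamma_\infty)=\mathscr{A}(\Gamma_0)$. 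The hardest step will be promoting subsequential convergence to full convergence to a uniquely determined translate; I would handle this through a \L ojasiewicz--Simon type gradient inequality at the critical point, which would simultaneously furnish the exponential rate claimed in Theorem \ref{MainTheorem1}.
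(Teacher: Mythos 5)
Your overall strategy coincides with the paper's: combine $\kosc\in L^{1}\oo{\left[0,\infty\right)}$ with a pointwise-in-time bound on $\frac{d}{dt}\kosc$ to force $\kosc\to0$, then identify a constant-$\kappa$ limit as a homothetic rescaling of $\tilde{\mathcal{I}}$ via \eqref{EuclideanCurvature}, with area preservation fixing the scale. However, your first stage contains a genuine gap. Lemma \ref{Lemma3} asserts only
\[
\frac{d}{dt}\intmink{\kappa_{\sigma^{m}}^{2}}\leq c\oo{m,p}\oo{\intmink{\kappa^{2}}}^{2\oo{m+p}+3},
\]
so a uniform bound on $\intmink{\kappa^{2}}$ yields $\intmink{\kappa_{\sigma^{m}}^{2}}\big|_{t}\leq\intmink{\kappa_{\sigma^{m}}^{2}}\big|_{0}+C\,t$, i.e.\ linear growth in time, not a uniform bound on $\left[0,\infty\right)$; iterating over $m$ does not change this. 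To extract uniform bounds you would have to retain the dissipative term from the \emph{proof} of Lemma \ref{Lemma3}, namely $\frac{d}{dt}\intmink{\kappa_{\sigma^{m}}^{2}}+\oo{2\mathscr{Q}_{\star}-\varepsilon}\intmink{\kappa_{\sigma^{m+p+1}}^{2}}\leq C$, and interpolate $\intmink{\kappa_{\sigma^{m}}^{2}}$ against $\intmink{\kappa_{\sigma^{m+p+1}}^{2}}$ to reach a differential inequality of the form $y'\leq-\delta y+C$. This is fixable, but it is not what you wrote, and your second stage leans on these uniform bounds to control the $\kappa_{\sigma^{p+1}}$ terms appearing in Proposition \ref{Proposition1}.

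The paper sidesteps all of this: the only higher-order input is $\llll{\kappa_{\sigma^{p}}}_{2}^{2}\leq c\oo{\Gamma_{0}}$ from Proposition \ref{LongTimeProp1}, because in the small-energy regime the $\kappa_{\sigma^{p+1}}$ terms in \eqref{Proposition1,00} carry a favourable sign and can be discarded, leaving the one-sided estimate $\frac{d}{dt}\kosc\leq\mathscr{L}^{-1}\oo{8\mathscr{A}(\tilde{\mathcal{I}})^{2}-\kosc}\llll{\kappa_{\sigma^{p}}}_{2}^{2}\leq C$. A one-sided bound already suffices: a nonnegative $L^{1}$ function whose derivative is bounded above must tend to zero, since otherwise one finds disjoint intervals of fixed length on which it stays bounded below. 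Your third stage also departs from the paper at the end: you defer uniqueness of the limit and the convergence rate to a \L ojasiewicz--Simon inequality, whereas the paper obtains full exponential convergence in Theorem \ref{Theorem3} via the compactness-contradiction inequality \eqref{Theorem3,2} together with Lemma \ref{AppendixLemma1}. Either route is viable in principle, but the \L ojasiewicz--Simon framework for this anisotropic operator of order $2\oo{p+1}$ is not off-the-shelf and would need to be established rather than cited.
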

\begin{proof}
We begin by showing that $\kosc\searrow0$ as $t\nearrow\infty$. We will then discuss the ramification of this result. Recall from a previous discussion that to show $\kosc\searrow0$, it will be enough to show that $\norm{\kosc'}$ is bounded for all time. First by Theorem \ref{Theorem1} we know that for $\varepsilon_{0}>0$ sufficiently small, $T=\infty$. Moreover by Proposition \ref{Proposition2}, $\kosc\leq2\varepsilon_{0}$ for all time and by Proposition \ref{Proposition1} we have the estimate
\[
\norm{\frac{d}{dt}\kosc}\leq\oo{\frac{8\mathscr{A}(\tilde{\mathcal{I}})^{2}-\kosc}{\mathscr{L}}}\llll{\kappa_{\sigma^{p}}}_{2}^{2}\leq\frac{8\mathscr{A}(\tilde{\mathcal{I}})^{2}}{\sqrt{4\mathscr{A}\oo{\Gamma_{0}}\,\mathscr{A}(\tilde{\mathcal{I}})}}\llll{\kappa_{\sigma^{p}}}_{2}^{2}<c\oo{\Gamma_{0},p,\partial\mathcal{U}}.
\]
We have used the results from Proposition \ref{LongTimeProp1} in the last step, and the isoperimetric inequality in the penultimate step. This immediately tells us that $\kosc\searrow0$ as $t\nearrow\infty$. We will denote the limiting immersion by $\Gamma_{\infty}$. That is,
\[
\Gamma_{\infty}:=\lim_{t\to\infty}\Gamma_{t}\oo{\mathbb{S}^{1}}=\lim_{t\to\infty}\Gamma\oo{\cdot,t}.
\]
Our earlier equations imply that $\kosc\oo{\Gamma_{\infty}}\equiv0$. Note that because the isoperimetric inequality forces $\mathscr{L}\oo{\Gamma_{\infty}}\geq\sqrt{4 A\oo{\Gamma_{0}}\,\mathscr{A}(\tilde{\mathcal{I}})}>0$, we can not have $\mathscr{L}\searrow0$ and so we may conclude that
\begin{equation}
\int_{\Gamma_{\infty}}{\oo{\kappa-\bar{\kappa}}^{2}\,d\sigma}=0.\label{Theorem2,1}
\end{equation}
It follows that $\kappa(\Gamma_{\infty})\equiv C$ for some constant $C>0$.  That is to say, $\hat{k}\equiv C\oo{h+h_{\theta\theta}}^{-1}$ where $\hat{k}$ is the ordinary Euclidean curvature of $\Gamma_{\infty}$. Since by \eqref{EuclideanCurvature} the Euclidean curvature of the isoperimetrix is equal to $\oo{h+h_{\theta\theta}}^{-1}$, this imples that $\Gamma_{\infty}$ is a homothetic rescaling of the isoperimetrix $\tilde{I}$. Since the enclosed area does not change under the anisotropic curve diffusion flow, this homothetic rescaling indeed has enclosed area equal to $\mathscr{A}\oo{\Gamma_{0}}$.
\end{proof}
Since the previous theorem tells us that $\Gamma\oo{\mathbb{S}^{1},t}$ approaches a homothetical rescaling of $\partial\mathcal{U}$, we can conclude that for every $m\in\mathbb{N}$ there exists a sequence of times $\left\{t_{j}\right\}$ such that 
\[
\int{\kappa_{s^{m}}^{2}}\Big|_{t=t_{j}}\searrow0.
\]
Unfortunately, this is not the smooth classical notion of convergence that we desire, and does not allow us to rule out the possibility of short sharp ``spikes'' (oscillations) in time. Indeed, even if we were to show that for every $m\in\mathbb{N}$ we have $\llll{\kappa_{\sigma^{m}}}_{2}^{2}\in L^{1}\oo{\left[0,\infty\right)}$ (which is true), this would not be enough because these aforementioned ``spikes'' could occur on a time interval approaching that of (Lebesgue) measure zero. To overcome this dilemma, we attempt to control $\norm{\frac{d}{dt}\intmink{\kappa_{\sigma^{m}}^{2}}}$, and show that his quantity can be bounded by a multiple of $\kosc\oo{\Gamma_{0}}$ (which can be fixed to be as small as desired a priori). We will see that this allows us to strengthen the sequential convergence result above to a more classical exponential convergence (see Theorem). 

\begin{thm}[Exponential Convergence]\label{Theorem3}
Suppose $\Gamma:\mathbb{S}^{1}\times\left[0,T\right)\rightarrow\mathcal{M}^{2}$ solves \eqref{APH} as well as the assumptions of Theorem \ref{Theorem2}. Then for each $m\in\mathbb{N}\backslash\left\{0\right\}$ there is a time $t_{m}$ sufficiently large such that for $t\geq t_{m}$ there are constants $c_{m},c_{m}^{\star}$ with
\begin{equation}
\intmink{\kappa_{\sigma^{m}}^{2}}\leq c_{m}e^{-c_{m}^{\star}t},\label{Theorem3,0}
\end{equation}
and
\begin{equation}
\llll{\kappa_{\sigma^{m}}}_{\infty}^{2}\leq\oo{\mathscr{L}\oo{\Gamma_{0}}c_{m+1}/2\pi}e^{-c_{m+1}^{\star}t}.\label{Theorem3,00}
\end{equation}
\end{thm}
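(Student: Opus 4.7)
The proof decomposes into three stages: first, establish exponential decay of $\kosc$ itself; second, extend Proposition \ref{LongTimeProp1} to obtain uniform-in-time bounds on $\intmink{\kappa_{\sigma^m}^{2}}$ for every $m\in\mathbb{N}$; third, interpolate to transfer exponential decay from $\kosc$ to every higher derivative. The $L^\infty$ estimate \eqref{Theorem3,00} will then follow immediately from Lemma \ref{AppendixLemma2} applied to the mean-zero function $\kappa_{\sigma^m}$ for $m\geq 1$.

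For stage one, Theorem \ref{Theorem2} lets us pick $t^\star$ so that $\kosc(t)$ is as small as desired for $t\geq t^\star$; on this interval the corollary to Proposition \ref{Proposition1} simplifies to
\begin{equation*}
\tfrac{d}{dt}\!\left(\kosc+8\mathscr{A}(\tilde{\mathcal{I}})^{2}\ln\mathscr{L}\right)+\mathscr{L}\,\mathscr{Q}_{\star}\intmink{\kappa_{\sigma^{p+1}}^{2}}\leq 0.
\end{equation*}
Iterated Poincaré--Wirtinger (Lemma \ref{AppendixLemma1}) supplies the reverse bound $\intmink{\kappa_{\sigma^{p+1}}^{2}}\geq c(p)\,\kosc/\mathscr{L}^{2p+3}$, while $\mathscr{L}$ is monotone decreasing (Corollary \ref{Corollary1}) with lower bound $2\sqrt{\mathscr{A}(\Gamma_{0})\mathscr{A}(\tilde{\mathcal{I}})}$ from the anisotropic isoperimetric inequality. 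Since $\mathscr{L}$ is bounded and convergent the logarithmic term has bounded total variation, so after absorbing it we arrive at $\tfrac{d}{dt}\kosc\leq-c\,\kosc$ on $[t^\star,\infty)$, whence $\kosc(t)\leq Ce^{-ct}$.

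Stage two is handled by induction on $m$, mirroring the proof of Proposition \ref{LongTimeProp1}. The evolution identity from Lemma \ref{Lemma3} is refined by writing $\kappa=\bar\kappa+\kave$ inside every nonlinearity; this extracts at least one factor of $\kave$ from each term that would otherwise interact badly with the top-order derivative $\kappa_{\sigma^{m+p+1}}$. Applying Lemma \ref{AppendixLemma5} in the style of \eqref{Proposition1,2*} bounds these contributions by $c\sqrt{\kosc}\,\intmink{\kappa_{\sigma^{m+p+1}}^{2}}$ plus lower-order remainders controlled via the inductive hypothesis; the $\sqrt{\kosc}$ factor lets us absorb the top-order term into the dissipation $2\mathscr{Q}_{\star}\intmink{\kappa_{\sigma^{m+p+1}}^{2}}$ once $\kosc\leq\varepsilon_{0}$. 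Integrating and invoking the $L^{1}$ control \eqref{KoscBounded} together with a Grönwall argument yields $\sup_{t\geq 0}\intmink{\kappa_{\sigma^{m}}^{2}}\leq C_{m}$.

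With both pieces in hand, fix $m\geq 1$, choose $M\gg m$, and apply Gagliardo--Nirenberg interpolation for periodic functions,
\begin{equation*}
\intmink{\kappa_{\sigma^m}^{2}}\leq C\bigl(\intmink{\kave^{2}}\bigr)^{\theta}\bigl(\intmink{\kappa_{\sigma^{M}}^{2}}\bigr)^{1-\theta},\qquad \theta=\tfrac{M-m}{M}\in(0,1),
\end{equation*}
noting $\intmink{\kave^{2}}=\kosc/\mathscr{L}$. Stage two bounds the second factor uniformly and stage one bounds the first by $Ce^{-ct}$, so we obtain \eqref{Theorem3,0} with rate $c_m^\star=c\theta$ for $t\geq t_m$ sufficiently large, and \eqref{Theorem3,00} follows from Lemma \ref{AppendixLemma2}. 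The principal difficulty is stage two: the smallness factor $\sqrt{\kosc}$ must be extracted uniformly in $m$ so that the top-order dissipation can swallow every nonlinear contribution, and the induction constants $C_m$ must be tracked with enough care to ensure they remain finite for every order.
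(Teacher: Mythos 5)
Your architecture (exponential decay of $\kosc$, then uniform bounds on all derivatives, then interpolation via Lemma \ref{AppendixLemma4}) is genuinely different from the paper's, which instead closes a differential inequality for each $\intmink{\kappa_{\sigma^{m}}^{2}}$ directly. The plan is viable in principle, but stage one as written has a real gap, and the missing ingredient is exactly the paper's key device.

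The problem is the term $-8\mathscr{A}(\tilde{\mathcal{I}})^{2}\frac{d}{dt}\ln\mathscr{L}=2\bar{\kappa}^{2}\mathscr{L}\intmink{\kappa_{\sigma^{p}}^{2}}$, which has the wrong sign when you move the logarithm to the right-hand side. ``Bounded total variation of $\ln\mathscr{L}$'' does not let you absorb it: integrating only gives $\kosc(t)\leq\kosc(s)+8\mathscr{A}(\tilde{\mathcal{I}})^{2}\ln\oo{\mathscr{L}(s)/\mathscr{L}(t)}$, which carries no rate, and a pointwise Gr\"onwall inequality requires dominating this term by the dissipation. Plain Poincar\'e (Lemma \ref{AppendixLemma1}) gives
\begin{equation}
2\bar{\kappa}^{2}\mathscr{L}\intmink{\kappa_{\sigma^{p}}^{2}}\leq\frac{2\mathscr{A}(\tilde{\mathcal{I}})^{2}}{\pi^{2}}\,\mathscr{L}\intmink{\kappa_{\sigma^{p+1}}^{2}},
\end{equation}
and absorption into $2\mathscr{Q}_{\star}\mathscr{L}\intmink{\kappa_{\sigma^{p+1}}^{2}}$ needs $\mathscr{A}(\tilde{\mathcal{I}})^{2}<\pi^{2}\mathscr{Q}_{\star}$; in the Euclidean case $h\equiv1$ this is $\pi^{2}<\pi^{2}$, i.e.\ exactly borderline, so the argument cannot close with the sharp Poincar\'e constant alone. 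The paper resolves precisely this obstruction with the improved inequality \eqref{Theorem3,2}, $\intmink{\kappa_{\sigma^{l}}^{2}}\leq c_{l}\mathscr{L}^{2}\kosc\intmink{\kappa_{\sigma^{l+1}}^{2}}$, proved by a compactness--contradiction argument via Theorem \ref{AppendixTheorem1}: the extra factor of $\kosc$ (which tends to zero by Theorem \ref{Theorem2}) supplies the strict smallness that plain Poincar\'e cannot. With that inequality your stage one does go through and yields $\frac{d}{dt}\kosc\leq-c\,\kosc$ for large $t$; without it, or some equivalent strict improvement, stage one is unproved. (Your stages two and three are sound in outline --- stage two mirrors Proposition \ref{LongTimeProp1}, where Lemma \ref{AppendixLemma0} handles the analogous $\bar{\kappa}^{2}$ term with an $\varepsilon$-weighted Poincar\'e plus an integrable $\kosc$ remainder, and stage three is Lemma \ref{AppendixLemma4} --- but the whole construction rests on the unproved exponential decay of $\kosc$.)
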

\begin{proof}
Using our earlier calculations from the proof of Proposition \ref{LongTimeProp1}, we have
\begin{align}
&\frac{d}{dt}\intmink{\kappa_{\sigma^{m}}^{2}}+2\mathscr{Q}_{\star}\intmink{\kappa_{m+p+1}^{2}}\nonumber\\
&\leq\sum_{j=1}^{\norm{p-m}}\sum_{i=1}^{j}\sum_{l=0}^{i-1}\bar{\kappa}^{-l}\intmink{\tilde{c}_{i}P_{i-l}^{j-i}\kave\cdot\kave_{\sigma^{m+p}}\kave_{\sigma^{m+p-j}}}\nonumber\\
&+\intmink{\norm{P_{4}^{2\oo{m+p},m+p}\kave}}+\mathscr{L}^{-1}\intmink{\norm{P_{3}^{2\oo{m+p},m+p}\kave}}\nonumber\\
&+2\oo{-1}^{\norm{m-p}}\bar{\kappa}^{2}\intmink{\kappa_{\sigma^{m+p}}^{2}}\nonumber\\
&\leq\sum_{j=1}^{\norm{p-m}}\sum_{i=1}^{j}\sum_{l=0}^{i-1}\mathscr{L}^{-l}\intmink{\norm{P_{i-l+2}^{2\oo{m+p}-j,m+p}\kave}}+\intmink{\norm{P_{4}^{2\oo{m+p},m+p}\kave}}\nonumber\\
&+\mathscr{L}^{-1}\intmink{\norm{P_{3}^{2\oo{m+p},m+p}\kave}}+2\oo{-1}^{\norm{m-p}}\bar{\kappa}^{2}\intmink{\kappa_{\sigma^{m+p}}^{2}}\nonumber\\
&\leq c\oo{\kosc+\sqrt{\kosc}}\intmink{\kappa_{\sigma^{m+p+1}}^{2}}+2\oo{-1}^{\norm{m-p}}\bar{\kappa}^{2}\intmink{\kappa_{\sigma^{m+p}}^{2}}.\label{Theorem3,1}
\end{align} 
Next we claim that for any smooth closed curve $\Gamma$ and any $l\in\mathbb{N}$ there exists a universal bounded constant $c_{l}$ such that
\begin{equation}
\intmink{\kappa_{\sigma^{l}}^{2}}\leq c_{l}\mathscr{L}^{2}\kosc\intmink{\kappa_{\sigma^{l+1}}^{2}}.\label{Theorem3,2}
\end{equation}
To prove this, we assume for the sake of contradiction that \eqref{Theorem3,2} is not true. Then there exists a series of immersions $\left\{\Gamma_{j}\right\}$ such that
\begin{equation}
\mathcal{R}_{j}:=\frac{\llll{\kappa_{\sigma^{l}}}_{2,\Gamma_{j}}^{2}}{\mathscr{L}^{2}\oo{\Gamma_{j}}\kosc\oo{\Gamma_{j}}\llll{\kappa_{\sigma^{l+1}}}_{2,\Gamma_{j}}^{2}}\nearrow\infty\,\,\text{as}\,\,j\nearrow\infty.\label{Theorem3,3}
\end{equation}
But by Lemma \ref{AppendixLemma1}, for each $j$ we have
\begin{equation}
\mathcal{R}_{j}\leq\frac{\frac{\mathscr{L}^{2}\oo{\Gamma_{j}}}{4\pi^{2}}\llll{\kappa_{\sigma^{l+1}}}_{2,\Gamma_{j}}^{2}}{\mathscr{L}^{2}\oo{\Gamma_{j}}\kosc\oo{\Gamma_{j}}\llll{\kappa_{\sigma^{l+1}}}_{2,\Gamma_{j}}^{2}}=\frac{1}{4\pi^{2}\kosc\oo{\Gamma_{j}}},\nonumber
\end{equation}
and so the only way for \eqref{Theorem3,3} to occur is if 
\begin{equation}
\kosc\oo{\Gamma_{j}}\searrow0\,\,\text{as}\,\,j\nearrow\infty.\label{Theorem3,4}
\end{equation} Then, as each $\Gamma_{j}$ satisfies the criteria of Theorem \ref{AppendixTheorem1}, we conclude there is a subsequence of immersions $\left\{\Gamma_{j_{k}}\right\}$ and an immersion $\Gamma_{\infty}$ such that $\Gamma_{j_{k}}\rightarrow\Gamma_{\infty}$ in the $C^{1}-$topology. Moreover, by \eqref{Theorem3,4}, we have $\kosc\oo{\Gamma_{\infty}}=0$. But this implies $\Gamma_{\infty}$ must be a homothetic rescaling of the isoperimetrix $\tilde{\mathcal{I}}$, in which case both sides on inequality \eqref{Theorem3,2} are zero. Hence the inequality holds trivially for the immersion $\Gamma_{\infty}$ with \emph{any} $c_{l}$ we wish, and so in fact we do not have $R_{j}\nearrow\infty$. This contradicts \eqref{Theorem3,3}, and we conclude that \eqref{Theorem3,2} must be true. Hence there exists a constant $c_{m}$ such that 
\[
\intmink{\kappa_{\sigma^{m}}^{2}}\leq c_{m}\mathscr{L}^{2}\kosc\intmink{\kappa_{\sigma^{m+1}}^{2}},
\]
and we conclude from \eqref{Theorem3,1} that 
\begin{equation}
\frac{d}{dt}\intmink{\kappa_{\sigma^{m}}^{2}}+\oo{2\mathscr{Q}_{\star}-c\oo{\kosc+\sqrt{\kosc}}}\intmink{\kappa_{\sigma^{m+2}}^{2}}\leq0.\nonumber
\end{equation}
for some universal constant $c$. Then, since $\kosc\searrow0$, there exists a time $t_{m}$ such that for $t\geq t_{m}$,
\begin{equation}
\frac{d}{dt}\intmink{\kappa_{\sigma^{m}}^{2}}\leq-\mathscr{Q}_{\star}\intmink{\kappa_{\sigma^{m+p+1}}^{2}}\leq-\mathscr{Q}_{\star}\oo{\frac{2\pi}{\mathscr{L}}}^{2\oo{p+1}}\intmink{\kappa_{\sigma^{m}}^{2}}.\label{Theorem3,5}
\end{equation}
Here we have used Lemma \ref{AppendixLemma1} $\oo{p+1}$ times. Integrating \eqref{Theorem3,5} over $\cc{t_{m},t}$ gives
\[
\intmink{\kappa_{\sigma^{m}}^{2}}\Big|_{t}\leq\oo{\intmink{\kappa_{\sigma^{m}}^{2}}\Big|_{t_{m}}\exp\oo{\mathscr{Q}_{\star}\oo{2\pi/\mathscr{L}}^{2\oo{p+1}}t_{m}}}\exp\oo{-\mathscr{Q}_{\star}\oo{2\pi/\mathscr{L}}^{2\oo{p+1}}t}.
\]
This is \eqref{Theorem3,0} with $c_{m}=\oo{\intmink{\kappa_{\sigma^{m}}^{2}}\Big|_{t_{m}}\exp\oo{\mathscr{Q}_{\star}\oo{2\pi/\mathscr{L}}^{2\oo{p+1}}t_{m}}}$ and\\
 $c_{m}^{\star}=\mathscr{Q}_{\star}\oo{2\pi/\mathscr{L}}^{2\oo{p+1}}$. Combining this with Lemma \ref{AppendixLemma2} then gives \eqref{Theorem3,00}. 
\end{proof}
Combining the results of Theorem \ref{Theorem2} and Theorem \ref{Theorem3} then proves our main result, Theorem \ref{MainTheorem1}.
\\\\
We finish this section with a proof of Proposition \ref{WaitingTimeProp}.
It is an adaptation of the proof of Proposition $1.5$ from \cite{Wheeler5} and Proposition $2$ from \cite{parkins2015polyharmonic}.
\begin{proof}[Proof of Proposition \ref{WaitingTimeProp}]
Recall that by the main theorem we have $T=\infty$. We may assume without loss of generality that there exists a time $t_{0}$ such that
\[
\begin{cases}
\kappa(\cdot,t)\ngtr0\,\,\text{for}\,\,t\in[0,t_{0}),\,\,\text{and}\\
\kappa(\cdot,t)>0\,\,\text{for}\,\,t\in[0,t_{0}).
\end{cases}
\]
We may also assume that
\begin{equation}
t_{0}>\frac{8\,\mathscr{A}(\tilde{\mathcal{I}})^{p-1}\mathscr{A}(\Gamma_{0})^{p+1}}{(p+1)\pi^{2p}}\oo{\mathscr{I}(\gamma_{0})^{p+1}-1},\label{WaitingTimeProp0}
\end{equation}
otherwise the proposition is trivially true. Next, our evolution equation for $\mathscr{L}$ from Corollary \ref{Corollary1} as well as Lemma \ref{AppendixLemma1} gives
\begin{equation}
\frac{d}{dt}\mathscr{L}=-\intmink{\kappa_{\sigma^{p}}^{2}}\leq-\oo{\frac{2\pi}{\mathscr{L}}}^{2(p-1)}\intmink{\kappa_{\sigma^{p}}^{2}}.\label{WaitingTimeProp1}
\end{equation}
Since by assumption for times $t\in[0,t_{0})$ there exists a point on the curve $\Gamma(t)$ with zero curvature, we may use Wirtinger's inequality (Lemma \ref{WirtingerInequality}) to obtain
\[
\intmink{\kappa^{2}}\leq \oo{\frac{\mathscr{L}}{\pi}}^{2}\intmink{\kappa_{\sigma}^{2}}
\]
and inserting this into \eqref{WaitingTimeProp1} gives the inequality
\begin{equation}
\frac{d}{dt}\mathscr{L}\leq-\oo{\frac{\pi}{\mathscr{L}}}^{2}\oo{\frac{2\pi}{\mathscr{L}}}^{2(p-1)}\intmink{\kappa^{2}}.\label{WaitingTimeProp2}
\end{equation}
Next, H\"older's inequality implies that
\[
\mathscr{A}(\tilde{\mathcal{I}})^{2}=\oo{\intmink{\kappa}}^{2}\leq\mathscr{L}\intmink{\kappa^{2}},
\]
so that \eqref{WaitingTimeProp2} becomes
\[
\frac{d}{dt}\mathscr{L}^{2(p+1)}\leq-2(p+1)\pi^{2}(2\pi)^{2(p-1)}A(\tilde{\mathcal{I}})^{2}.
\]
Integrating in time over $\cc{0,t_{0}}$, we find that
\[
\mathscr{L}^{2(p+1)}(t_{0})\leq\mathscr{L}^{2(p+1)}(0)-2(p+1)\pi^{2}(2\pi)^{2(p-1)}A(\tilde{\mathcal{I}})^{2}\,t_{0}.
\]
However, the choice of $t_{0}$ from \eqref{WaitingTimeProp0} implies that
\[
\mathscr{L}^{2(p+1)}(t_{0})<(4\mathscr{A}(\tilde{\mathcal{I}})\,\mathscr{A}(t_{0}))^{p+1},
\]
which contradicts the isoperimetric inequality. Therefore the proposition must be true.
\end{proof}
\end{section}

\begin{section}{Appendix}

\begin{proof}[Derivation of the tangent and normal vectors]
Let us now derive the equations $\eqref{MinkowskiTangentNormal}$ for  the unit tangent and normal vectors associated to an immersed curve in the Minkowski setting. These allow us to develop a Minkowski analogue of the Frenet-Serret equations in Section \ref{ConvexSection}. 
\\\\
Given an indicatrix as defined in the previous section, along with a curve $\Gamma$ with Euclidean tangent vector $\tau\oo{\theta}$, it is most logical to define the Minkowski unit tangent vector $T$ in the same direction as $\tau\oo{\theta}$ by
\[
T\oo{\theta}=r\oo{\theta}\tau\oo{\theta}.
\]
The Euclidean tangent has been multiplied by the radius of the indicatrix at the corresponding angle to insure $T$ is of Minkowski unit length. We wish to arrive at an analogue of the Frenet equations, and so wish to derive a set of equations in the form of 
\begin{equation}
\oo{
\begin{array}{c}
T\\
N
\end{array}
}_{\theta}
=
\oo{
\begin{array}{cc}
0 & \alpha\oo{\theta}\\
\beta\oo{\theta} & 0
\end{array}
}
\oo{
\begin{array}{c}
T\\
N
\end{array}
}\label{MinkowskiFrenet1}
\end{equation}
Presently, $\alpha,\beta$ are unknown functions, but we do know that because
$T$ and $N$ need to be $2\pi-$periodic that $\alpha,\beta$ will also need to be
$2\pi-$periodic.
Now we already know from the Euclidean Frenet-Serret equations,
$\eqref{MinkowskiFrenet1}$ and from our definition of $T$ that
\[
T_\theta=r_{\theta}\tau+r\tau_{\theta}=r_{\theta}\tau+rn=\alpha N.
\]
Differentiating this identity yields
\[
N_{\theta}=\frac{1}{r}\cc{\oo{\frac{r_{\theta}}{\alpha}}_{\theta}-\oo{\frac{r}{\alpha}}}T+\cc{\oo{\frac{r_{\theta}}{\alpha}}+\oo{\frac{r}{\alpha}}_{\theta}}n.
\]
We want $N_{\theta}$ to be solely in the direction of $\tau$ (and not $n$).
Hence the last equation forces
\[
0=\oo{\frac{r_{\theta}}{\alpha}}+\oo{\frac{r}{\alpha}}_{\theta}=\frac{2r_{\theta}}{\alpha}-\frac{r\alpha_{\theta}}{\alpha^{2}}\Leftrightarrow\frac{2r_{\theta}}{r}=\frac{\alpha_{\theta}}{\alpha}.
\]
This last equation is equivalent to
\[
2\oo{\ln{r}}_{\theta}=\oo{\ln{\alpha}}_{\theta},
\]
or
\[
\alpha=Cr^{2}
\]
for some constant $C$. Noting that $\det\oo{T,N}=C^{-1}$, we choose $C=1$ so
that the Minkowski area element is identical to its Euclidean counterpart.
Accordingly, the enclosed area $\mathscr{A}$ of a closed curve
$\Gamma:\mathbb{S}^{1}\rightarrow\mathcal{M}^{2}$ is simply equal to
\begin{equation}
\mathscr{A}\oo{\Gamma}=-\frac{1}{2}\intcurve{(\Gamma,n)}.\label{Area}
\end{equation}
Note that measure in the integral $ds$ could have been swapped for $d\sigma$.
We arrive at the following expression for the Minkowski tangent and normal vectors $T$ and $N$:
\begin{equation}
\oo{
\begin{array}{c}
T\\
N
\end{array}
}
=
\oo{
\begin{array}{cc}
r & 0\\
-h_{\theta} & h
\end{array}
}
\oo{
\begin{array}{c}
\tau\\
n
\end{array}
}\notag
\end{equation}
\end{proof}

\begin{prop}\label{TotalCurvatureProp}
Let $\Gamma:\mathbb{S}^{1}\rightarrow\mathcal{M}^{2}$ be a simple closed immersion in the Minkowski plane $\mathcal{M}$ with associated indicatrix $\partial\mathcal{U}$ and isoperimetrix $\tilde{\mathcal{I}}$. Then
\[
\intmink{\kappa}=2\,\mathscr{A}(\tilde{\mathcal{I}}),
\]
where $\mathscr{A}(\tilde{\mathcal{I}})$ denotes the enclosed area of the isoperimetrix.
\end{prop}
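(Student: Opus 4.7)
The plan is to compute both sides as integrals over $\theta\in[0,2\pi)$ and match them. The two relevant inputs are the identity $\kappa=k(h+h_{\theta\theta})$ with $d\sigma = h\,ds$, and the explicit parametrisation $N(\theta)=-h_{\theta}(\cos\theta,\sin\theta)+h(-\sin\theta,\cos\theta)$ of $\tilde{\mathcal{I}}$ from \eqref{IsoperimetrixDefinition}.

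First I would compute $\mathscr{A}(\tilde{\mathcal{I}})$ via the shoelace (signed area) formula. Writing $N=(x(\theta),y(\theta))$ and differentiating, all four ``cross'' terms collapse and one finds $N_{\theta}=-(h+h_{\theta\theta})(\cos\theta,\sin\theta)$. Plugging into $xy_{\theta}-yx_{\theta}$ and simplifying (the $h_{\theta}$ terms cancel and $\sin^{2}+\cos^{2}=1$) leaves $xy_{\theta}-yx_{\theta}=h(h+h_{\theta\theta})$, so that
\[
2\,\mathscr{A}(\tilde{\mathcal{I}})=\int_{0}^{2\pi} h(\theta)\bigl(h(\theta)+h_{\theta\theta}(\theta)\bigr)\,d\theta.
\]

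Next I would reduce the left-hand side to the same integral. Since $\kappa=k(h+h_{\theta\theta})$ and $d\sigma=h\,ds$,
\[
\intmink{\kappa}=\int_{\Gamma} h(\theta(s))\bigl(h+h_{\theta\theta}\bigr)(\theta(s))\,k(s)\,ds.
\]
Using $\theta_{s}=k$ this is $\int_{\Gamma}F(\theta(s))\,\theta'(s)\,ds$ with $F(\theta):=h(h+h_{\theta\theta})$. Taking an antiderivative $G$ of $F$, the integral equals $G(\theta(L))-G(\theta(0))$. Because $\Gamma$ is a simple closed curve, the theorem of turning tangents gives $\theta(L)-\theta(0)=2\pi$ (with the standard orientation), and since $F$ is $2\pi$-periodic, the quantity $G(\theta_{0}+2\pi)-G(\theta_{0})=\int_{0}^{2\pi}F(\phi)\,d\phi$ is \emph{independent} of the basepoint $\theta_{0}$. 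Hence
\[
\intmink{\kappa}=\int_{0}^{2\pi} h(\phi)\bigl(h(\phi)+h_{\theta\theta}(\phi)\bigr)\,d\phi = 2\,\mathscr{A}(\tilde{\mathcal{I}}),
\]
which is the claim.

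The only delicate point is the substitution $\theta=\theta(s)$ when $\Gamma$ is not convex, since then $\theta(s)$ need not be monotone and one cannot naively ``change variables to $\theta$''. This is precisely where periodicity of $h$ does the work: by integrating the $2\pi$-periodic density $F$ against the exact differential $d\theta$ around a closed loop whose total turning is $2\pi$, all of the backtracking cancels, leaving a single clean period integral. Everything else is a routine trigonometric simplification.
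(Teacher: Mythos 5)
Your proof is correct and follows essentially the same route as the paper: both sides are reduced to the single period integral $\int_{0}^{2\pi}h(h+h_{\theta\theta})\,d\theta$, with the area of $\tilde{\mathcal{I}}$ computed from $N_{\theta}=-(h+h_{\theta\theta})(\cos\theta,\sin\theta)$ (you via the shoelace formula, the paper via $-\tfrac12\int(\tilde{\mathcal{I}},\tilde n)\,d\tilde s$, which is the same computation). Your treatment of the substitution $d\theta=k\,ds$ is actually more careful than the paper's: the antiderivative/turning-tangents argument correctly justifies the change of variables for non-convex simple curves, a point the paper passes over silently.
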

\begin{proof}
Using the identities $d\sigma=h\,ds$ and $d\theta=k\,ds$, we have
\begin{equation}
\intmink{\kappa}=\intmink{k(h+h_{\theta\theta})}=\int_{0}^{2\pi}{h(h+h_{\theta\theta})\,d\theta}.\label{TotalCurvatureProp1}
\end{equation}
Next, using the notation $\tau=(\cos\theta,\sin\theta),\,n=(-\sin\theta,\cos\theta)$, we can write the isoperimetrix $\tilde{I}(\theta)$ as
\[
\tilde{\mathcal{I}}(\theta)=\left\{-h_{\theta}\,\tau+h\,n\,:\,\theta\in[0,2\pi)\right\}.
\]
A quick calculation gives $\tilde{\mathcal{I}}_{\theta}=-(h+h_{\theta\theta})\,\tau$, which imples the induced Euclidean arc length and normal to $\tilde{\mathcal{I}}$ are given by
\[
d\tilde{s}=\sqrt{h+h_{\theta\theta}}\,d\theta\,\,\text{and}\,\,\tilde{n}=-\sqrt{h+h_{\theta\theta}}\,n,
\]
respectively. This implies that the signed enclosed area of $\tilde{\mathcal{I}}$ is given by
\begin{equation}
\mathcal{A}(\tilde{\mathcal{I}})=-\frac{1}{2}\intmink{(\tilde{\mathcal{I}},\tilde{n})\,d\tilde{s}}=\frac{1}{2}\int_{0}^{2\pi}{h(h+h_{\theta\theta})\,d\theta}.
\end{equation}
Comparing to
\[
\intmink{\kappa}=\intmink{k(h+h_{\theta\theta})}=\int_{0}^{2\pi}{h(h+h_{\theta\theta})\,d\theta},
\]
the results of the proposition then follow.
\end{proof}

\begin{lem}\label{AppendixLemma0}
Let $\Gamma:\mathbb{S}^{1}\rightarrow\mathcal{M}^{2}$ be a smooth closed curve with Minkowski curvature $\kappa$ and Minkowski arc length element $d\sigma$. Then for any $m\in\mathbb{N}$ we have
\[
\intmink{\kappa_{\sigma^{m}}^{2}}\leq\varepsilon \mathscr{L}^{2}\intmink{\kappa_{\sigma^{m+1}}^{2}}+\frac{1}{4\varepsilon^{m}}\mathscr{L}^{-\oo{2m+1}}K_{osc},
\]
for any $\varepsilon>0$.
\end{lem}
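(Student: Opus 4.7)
The plan is to proceed by induction on $m$, exploiting the key observation that $\bar{\kappa}$ is constant in $\sigma$, so for every $m\geq 1$ we have $\kappa_{\sigma^{m}}=(\kappa-\bar{\kappa})_{\sigma^{m}}$. In particular, the ``base'' quantity $\intmink{(\kappa-\bar{\kappa})^{2}}$ equals $\mathscr{L}^{-1}\kosc$ by the definition \eqref{KoscDefinition}, which is precisely what produces the $\mathscr{L}^{-(2m+1)}\kosc$ term in the statement.

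For the base case $m=1$, I would integrate by parts once on the circle (boundary terms vanish by periodicity) to write
\[
\intmink{\kappa_{\sigma}^{2}}=\intmink{(\kappa-\bar{\kappa})_{\sigma}\kappa_{\sigma}}=-\intmink{(\kappa-\bar{\kappa})\,\kappa_{\sigma^{2}}},
\]
then apply Cauchy--Schwarz followed by Young's inequality $ab\leq \tfrac{1}{4\alpha}a^{2}+\alpha b^{2}$ with weight $\alpha=\varepsilon\mathscr{L}^{2}$. Substituting $\intmink{(\kappa-\bar{\kappa})^{2}}=\mathscr{L}^{-1}\kosc$ delivers the inequality for $m=1$ with the claimed constants.

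For the induction step from $m-1$ to $m$, I would integrate by parts once to write $\intmink{\kappa_{\sigma^{m}}^{2}}=-\intmink{\kappa_{\sigma^{m-1}}\kappa_{\sigma^{m+1}}}$ and again apply Young's inequality with a weight $\beta$ to be fixed. Then I would invoke the induction hypothesis on $\intmink{\kappa_{\sigma^{m-1}}^{2}}$ with parameter $\varepsilon'=2\beta/\mathscr{L}^{2}$, chosen precisely so that the resulting coefficient of $\intmink{\kappa_{\sigma^{m}}^{2}}$ on the right equals $1/2$ and can be absorbed into the left. A final choice $\beta=\varepsilon\mathscr{L}^{2}/2$ rescales everything so that the $\intmink{\kappa_{\sigma^{m+1}}^{2}}$ coefficient is exactly $\varepsilon\mathscr{L}^{2}$ and, as a consequence of the telescoping of the factors of $2$ and $\mathscr{L}$, the $\kosc$ coefficient collapses to $\tfrac{1}{4\varepsilon^{m}}\mathscr{L}^{-(2m+1)}$.

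The only real work here is bookkeeping: the statement tracks exact constants $1/(4\varepsilon^{m})$ and $\mathscr{L}^{-(2m+1)}$, so the main obstacle is choosing the weights $\varepsilon'$ and $\beta$ at each step so that the factor $2^{-m}$ in $\kosc$ and the power $\mathscr{L}^{-(2m+1)}$ both materialise cleanly at the end of the induction. This is routine but requires care. No auxiliary tool beyond periodic integration by parts, Cauchy--Schwarz, Young's inequality, and the identity $\intmink{(\kappa-\bar{\kappa})^{2}}=\mathscr{L}^{-1}\kosc$ is needed.
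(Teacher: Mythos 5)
Your proposal is correct and follows essentially the same route as the paper: integrate by parts, apply Cauchy--Schwarz/Young, invoke the inductive hypothesis on the lower-order term, and absorb the resulting $\tfrac12\intmink{\kappa_{\sigma^{m}}^{2}}$ into the left-hand side. The weight choices $\varepsilon'=2\beta/\mathscr{L}^{2}$ and $\beta=\varepsilon\mathscr{L}^{2}/2$ do reproduce the exact constants $\tfrac{1}{4\varepsilon^{m}}\mathscr{L}^{-(2m+1)}$, matching the paper's computation.
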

\begin{proof}
We will prove the lemma inductively. The case $m=1$ can be checked quite easily, by applying integration by parts and the Cauchy-Schwarz inequality:
\begin{align}
\intmink{\kappa_{\sigma}^{2}}&=\intmink{\oo{\kappa-\bar{\kappa}}_{\sigma}^{2}}\\
&=-\intmink{\oo{\kappa-\bar{\kappa}}\oo{\kappa-\bar{\kappa}}_{\sigma^{2}}}\nonumber\\
&\leq\oo{\intmink{\oo{\kappa-\bar{\kappa}}^{2}}}^{\frac{1}{2}}\oo{\intmink{\kappa_{\sigma^{2}}^{2}}}^{\frac{1}{2}}\nonumber\\
&\leq\varepsilon \mathscr{L}^{2}\intmink{\kappa_{\sigma^{2}}^{2}}+\frac{1}{4\varepsilon^{1}}\mathscr{L}^{-2}\intmink{\oo{\kappa-\bar{\kappa}}^{2}}.\nonumber
\end{align}
Next assume inductively that the statement is true for $j=m$. That is, assume that
\begin{equation}
\intmink{\kappa_{\sigma^{j}}^{2}}\leq\varepsilon \mathscr{L}^{2}\intmink{\kappa_{\sigma^{j+1}}^{2}}+\frac{1}{4\varepsilon^{j}}\mathscr{L}^{-\oo{2j+1}}K_{osc}\label{AppendixLemma0,1}
\end{equation}
for any $\varepsilon>0$.
Again performing integration by parts and the Cauchy-Schwarz inequality, we have for any $\varepsilon>0$:
\begin{align}
\intmink{\kappa_{\sigma^{j+1}}^{2}}&=-\intmink{\kappa_{\sigma^{j}}\cdot\kappa_{\sigma^{j+2}}}\\
&\leq\oo{\intmink{\kappa_{\sigma^{j}}^{2}}}^{\frac{1}{2}}\oo{\intmink{\kappa_{\sigma^j+2}^{2}}}^{\frac{1}{2}}\nonumber\\
&\leq\frac{\varepsilon}{2} \mathscr{L}^{2}\intmink{\kappa_{\sigma^{j+2}}^{2}}+\frac{1}{2\varepsilon}\mathscr{L}^{-2}\intmink{\kappa_{\sigma^{j}}^{2}}.\label{AppendixLemma0,2}
\end{align}
Substituting the inductive assumption \eqref{AppendixLemma0,1} into \eqref{AppendixLemma0,2} then gives
\begin{multline}
\intmink{\kappa_{\sigma^{j+1}}^{2}}\\
\leq\frac{\varepsilon}{2} \mathscr{L}^{2}\intmink{\kappa_{\sigma^{j+2}}^{2}}+\frac{1}{2\varepsilon}\mathscr{L}^{-2}\oo{\varepsilon \mathscr{L}^{2}\intmink{\kappa_{\sigma^{j+1}}^{2}}+\frac{1}{4\varepsilon^{j}}\oo{\varepsilon}\mathscr{L}^{-\oo{2j+1}}K_{osc}},\nonumber
\end{multline}
meaning that
\[
\frac{1}{2}\intmink{\kappa_{\sigma^{j+1}}^{2}}\leq\frac{\varepsilon}{2} \mathscr{L}^{2}\intmink{\kappa_{\sigma^{j+2}}^{2}}+\frac{1}{2}\cdot\frac{1}{4\varepsilon^{j+1}}\mathscr{L}^{-\oo{2\oo{j+1}+1}}K_{osc}.
\]
Multiplying out by $2$ then gives us the inductive step, completing the lemma.
\end{proof}

\begin{lem}\label{AppendixLemma1}
Let $f:\mathbb{R}\rightarrow\mathbb{R}$ be an absolutely continuous and periodic function of period $P$. Then, if $\int_{0}^{P}{f\,dx}=0$ we have
\[
\int_{0}^{P}{f^{2}\,dx}\leq\frac{P^{2}}{4\pi^{2}}\int_{0}^{P}{f_{x}^{2}\,dx},
\]
with equality if and only if
\[
f\oo{x}=A\cos\oo{\frac{2\pi}{P}x}+B\sin\oo{\frac{2\pi}{P}x}
\]
for some constants $A,B$.
\end{lem}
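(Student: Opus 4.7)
The plan is to prove this by Fourier series, which is the standard route for Wirtinger-type inequalities. First I would normalise the period: set $g(y) = f(Py/2\pi)$, so that $g$ is $2\pi$-periodic, absolutely continuous, with $\int_0^{2\pi} g\,dy = 0$, and the inequality to prove becomes $\int_0^{2\pi} g^2\,dy \leq \int_0^{2\pi} g_y^2\,dy$ after tracking the scaling factors. This reduces the problem to the classical case on $[0,2\pi]$.

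Next I would expand $g$ in its Fourier series
\begin{equation*}
g(y) = \sum_{n\in\mathbb{Z}} c_n e^{iny}, \qquad c_n = \frac{1}{2\pi}\int_0^{2\pi} g(y) e^{-iny}\,dy.
\end{equation*}
The zero-mean hypothesis forces $c_0 = 0$. Since $g$ is absolutely continuous and periodic, integration by parts gives that the Fourier coefficients of $g_y$ are $in\,c_n$, and $g_y \in L^2$ by hypothesis so this is legitimate. Parseval's identity then yields
\begin{equation*}
\int_0^{2\pi} g^2\,dy = 2\pi\sum_{n\neq 0}|c_n|^2, \qquad \int_0^{2\pi} g_y^2\,dy = 2\pi\sum_{n\neq 0} n^2 |c_n|^2.
\end{equation*}
Since $n^2 \geq 1$ for every $n\neq 0$, the second sum dominates the first term-by-term, proving the inequality. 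Undoing the change of variables then restores the factor $P^2/4\pi^2$.

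For the equality case, equality in the termwise bound $n^2|c_n|^2 \geq |c_n|^2$ forces $c_n = 0$ whenever $|n|\geq 2$. Hence $g(y) = c_1 e^{iy} + c_{-1}e^{-iy}$, and reality of $g$ gives $c_{-1}=\overline{c_1}$, so that $g(y) = A\cos y + B\sin y$ for real constants $A,B$. Pulling this back through the rescaling yields the stated extremal form $f(x) = A\cos(2\pi x/P) + B\sin(2\pi x/P)$.

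There is no real obstacle here beyond verifying that absolute continuity is enough to justify term-by-term differentiation of the Fourier series in the $L^2$ sense; this is standard and follows from the fact that $f_x \in L^2$ combined with the characterisation of absolutely continuous functions as indefinite integrals of their derivatives. The whole argument is routine Fourier analysis and I would expect to write it in essentially the form above.
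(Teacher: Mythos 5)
Your proof is correct, but it takes a different route from the paper. The paper disposes of this lemma in one sentence, appealing to the calculus of variations: one extremises $\int_{0}^{P}f^{2}\,dx$ subject to a constrained value of $\int_{0}^{P}f_{x}^{2}\,dx$ and the zero-mean condition, and reads off the optimal constant and the extremals from the resulting Euler--Lagrange eigenvalue problem. Your Fourier-series argument is the other classical proof of Wirtinger's inequality, and as written it is actually more complete than what the paper offers: the variational sketch tacitly assumes an extremiser exists in the relevant class of absolutely continuous functions, which would need a direct-method or compactness justification to be fully rigorous, whereas Parseval together with the termwise bound $n^{2}\lvert c_{n}\rvert^{2}\geq\lvert c_{n}\rvert^{2}$ for $n\neq0$ gives both the inequality and the characterisation of equality with no existence issue. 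Your scaling bookkeeping (the factor $P^{2}/4\pi^{2}$) and the treatment of the equality case (vanishing of $c_{n}$ for $\lvert n\rvert\geq2$ plus reality forcing $c_{-1}=\overline{c_{1}}$) are both correct, and your remark that absolute continuity with $f_{x}\in L^{2}$ legitimises the relation between the Fourier coefficients of $f$ and $f_{x}$ addresses the only real technical point; when $f_{x}\notin L^{2}$ the inequality is vacuous.
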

\begin{proof}
The proof is relatively straightforward, and follows from finding a function $f$ that extremises the integral $\int_{0}^{P}{f^{2}\,dx}$, given a constrained value of $\int_{0}^{P}{f_{x}^{2}\,dx}$, using the calculus of variations.
\end{proof}
\begin{lem}\label{AppendixLemma2}
Let $f:\mathbb{R}\rightarrow\mathbb{R}$ be an absolutely continuous and periodic function of period $P$. Then, if $\int_{0}^{P}{f\,dx}=0$ we have
\[
\llll{f}_{\infty}^{2}\leq\frac{P}{2\pi}\int_{0}^{P}{f_{x}^{2}\,dx}.
\]
\end{lem}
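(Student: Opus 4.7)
The plan is to reduce this $L^{\infty}$ estimate to the $L^{2}$-Wirtinger-type inequality already established in Lemma \ref{AppendixLemma1}. Because $f$ is continuous (being absolutely continuous) and has zero mean, the intermediate value theorem supplies a point $x_{0}$ with $f(x_{0})=0$; by $P$-periodicity, $f(x_{0}+P)=0$ as well. Let $x^{\star}\in[x_{0},x_{0}+P]$ be a point at which $|f|$ attains its maximum (which exists by continuity and compactness).

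The key manoeuvre is to apply the fundamental theorem of calculus to $f^{2}$ \emph{both ways around the circle}. Since $f^{2}(x_{0})=f^{2}(x_{0}+P)=0$ and $(f^{2})_{y}=2ff_{y}$, I obtain the two identities
\[
f^{2}(x^{\star})=2\int_{x_{0}}^{x^{\star}} f\,f_{y}\,dy\quad\text{and}\quad f^{2}(x^{\star})=-2\int_{x^{\star}}^{x_{0}+P} f\,f_{y}\,dy.
\]
Adding these, taking absolute values, and using periodicity to replace $[x_{0},x_{0}+P]$ by $[0,P]$ yields
\[
f^{2}(x^{\star}) \le \int_{0}^{P}|f|\,|f_{y}|\,dy.
\]
The doubling here is essential: a one-sided estimate from $x_{0}$ to $x^{\star}$ would produce only the inferior constant $P/\pi$.

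Finally, I will apply the Cauchy--Schwarz inequality to the right-hand side and then invoke Lemma \ref{AppendixLemma1} to control $\|f\|_{L^{2}}$ by $\|f_{y}\|_{L^{2}}$, obtaining
\[
\|f\|_{\infty}^{2} \le \left(\int_{0}^{P}f^{2}\,dy\right)^{1/2}\!\left(\int_{0}^{P}f_{y}^{2}\,dy\right)^{1/2} \le \frac{P}{2\pi}\int_{0}^{P}f_{y}^{2}\,dy,
\]
which is the desired bound. No serious obstacle is anticipated: the only point requiring care is recognising that one must split the integration at the interior maximum $x^{\star}$ and exploit both halves in order to capture the factor of two that upgrades the constant from $P/\pi$ to $P/(2\pi)$.
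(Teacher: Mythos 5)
Your proposal is correct and follows essentially the same route as the paper: obtain two zeros of $f$, apply the fundamental theorem of calculus to $f^{2}$ from both zeros so that the two contributions combine into $\int_{0}^{P}\norm{f\,f_{x}}\,dx$ over a single period, then finish with Cauchy--Schwarz and Lemma \ref{AppendixLemma1}. The only (cosmetic) difference is that you take the second zero to be the periodic translate $x_{0}+P$, whereas the paper locates two distinct zeros $p<q$ inside $[0,P)$; both are valid.
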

\begin{proof}
Since $\int_{0}^{P}{f\,dx}=0$ and $f$ is $P-$periodic we conclude that there exists distinct $0\leq p<q<P$ such that 
\[
f\oo{p}=f\oo{q}=0.
\]
Next, the fundamental theorem of calculus tells us that for any $x\in\oo{0,P}$,
\[
\frac{1}{2}\cc{f\oo{x}}^{2}=\int_{p}^{x}{ff_{u}\,du}=\int_{q}^{x}{ff_{u}\,du}.
\]
Hence
\begin{align*}
\oo{f\oo{x}}^{2}&=\int_{p}^{u}{ff_{u}\,du}-\int_{x}^{q}{ff_{u}\,du}\leq\int_{p}^{q}{\norm{ff_{x}}\,dx}\leq\int_{0}^{P}{\norm{ff_{x}}\,dx}\\
&\quad\leq\oo{\int_{0}^{P}{f^{2}\,dx}\cdot\int_{0}^{P}{f_{x}^{2}\,dx}}^{\frac{1}{2}}\leq\frac{P}{2\pi}\int_{0}^{P}{f_{x}^{2}\,dx},
\end{align*}
where the last step follows from Lemma \ref{AppendixLemma1}. We have also utilised H\"{o}lder's inequality with $p=q=2$.
\end{proof}

\begin{lem}[Wirtinger's inequality \cite{dym1985fourier}]\label{WirtingerInequality}
Let $f:\mathbb{R}\rightarrow\mathbb{R}$ be an absolutely continuous and periodic function of period $P$. If there exists a point $p\in\cc{0,P}$ with $f(p)=0$, then
\[
\int_{0}^{P}{f^{2}\,ds}\leq\oo{\frac{P}{\pi}}^{2}\int_{0}^{P}{f_{x}^{2}\,dx}.
\]
\end{lem}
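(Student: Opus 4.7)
The plan is to reduce to the classical Dirichlet Wirtinger inequality on an interval via odd reflection, where the sharp constant $\pi/P$ emerges immediately from the Fourier sine series.

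First I would translate the hypothesis: since $f$ is $P$-periodic, set $g(x):=f(x+p)$. Then $g$ is absolutely continuous and $P$-periodic, $g(0)=0$ by assumption, and periodicity forces $g(P)=g(0)=0$ as well. The integrals $\int_{0}^{P}g^{2}\,dx$ and $\int_{0}^{P}g_{x}^{2}\,dx$ coincide with the corresponding integrals for $f$, so it suffices to prove the inequality for $g$ on $[0,P]$ with vanishing endpoint values.

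Next I would extend $g|_{[0,P]}$ by odd reflection to $\tilde g$ on $[-P,P]$, setting $\tilde g(x):=-g(-x)$ for $x\in[-P,0)$. The condition $g(0)=0$ guarantees that this extension is absolutely continuous across the origin, and by construction $\tilde g$ is odd with $\tilde g(\pm P)=0$. Reflection symmetry yields
\[
\int_{-P}^{P}\tilde g^{2}\,dx = 2\int_{0}^{P}g^{2}\,dx,\qquad \int_{-P}^{P}\tilde g_{x}^{2}\,dx = 2\int_{0}^{P}g_{x}^{2}\,dx.
\]
Extending $\tilde g$ periodically to all of $\mathbb{R}$ with period $2P$ and expanding in a Fourier series produces, because $\tilde g$ is odd, a sine-only series
\[
\tilde g(x) = \sum_{n\geq 1}b_{n}\sin\!\left(\tfrac{n\pi x}{P}\right).
\]
Parseval's identity then gives $\int_{-P}^{P}\tilde g^{2}\,dx = P\sum_{n}b_{n}^{2}$ and $\int_{-P}^{P}\tilde g_{x}^{2}\,dx = P\sum_{n}(n\pi/P)^{2}b_{n}^{2}$, and the term-by-term inequality $(n\pi/P)^{2}\geq(\pi/P)^{2}$ for $n\geq 1$ delivers
\[
\int_{-P}^{P}\tilde g_{x}^{2}\,dx \geq \left(\frac{\pi}{P}\right)^{2}\int_{-P}^{P}\tilde g^{2}\,dx.
\]
Cancelling the common factor of $2$ produces the stated inequality.

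I do not foresee any serious obstacle; the only technical item is verifying that the odd reflection yields an absolutely continuous function on $[-P,P]$, which follows from the hypothesis $f(p)=0$ (hence $g(0)=0$) together with the absolute continuity of $g$. A virtue of this approach is that equality holds precisely when only the first Fourier mode is nonzero, which recovers the extremisers $f(x)=c\,\sin(\pi(x-p)/P)$ and shows that the constant $\pi/P$ is sharp.
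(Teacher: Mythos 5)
Your argument is correct, and it is essentially the standard proof: the paper itself offers no proof of this lemma, merely a citation to Dym--McKean, and the translation-plus-odd-reflection reduction to a Fourier sine series on $[-P,P]$ with Parseval is exactly the classical route. The only points worth tightening are cosmetic: you should note that one may assume $\int_0^P f_x^2\,dx<\infty$ (otherwise the inequality is vacuous), which justifies the term-by-term differentiation via Parseval; and your closing remark on extremisers is slightly imprecise, since $c\sin\bigl(\pi(x-p)/P\bigr)$ is $2P$-periodic rather than $P$-periodic --- the genuine $P$-periodic equality case is (up to sign) $c\,\bigl|\sin\bigl(\pi(x-p)/P\bigr)\bigr|$, whose restriction to one period agrees with your formula. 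Neither issue affects the validity of the inequality you prove.
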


\begin{lem}[Dziuk, Kuwert, and Sch\"{a}tzle \cite{Kuwert3}, Lemma $2.4$]\label{AppendixLemma3}
Let $\Gamma:\mathbb{S}^{1}\rightarrow\mathbb{R}^{2}$ be a smooth closed curve. Let $\phi:\mathbb{S}^{1}\rightarrow\mathbb{R}$ be a sufficiently smooth function. Then for any $l\geq 2,K\in\mathbb{N}$ and $0\leq i<K$ we have
\begin{equation}
\mathscr{L}^{i+1-\frac{1}{l}}\oo{\intmink{\oo{\phi}_{\sigma^{i}}^{2}}}^{\frac{1}{l}}\leq c\oo{K}\mathscr{L}^{\frac{1-\alpha}{2}}\oo{\intmink{\phi^{2}}}^{\frac{1-\alpha}{2}}\llll{\phi}_{K,2}^{\alpha}.\label{AppendixLemma3,1}
\end{equation}
Here $\alpha=\frac{i+\frac{1}{2}-\frac{1}{l}}{K}$, and
\[
\llll{\phi}_{K,2}:=\sum_{j=0}^{K}\mathscr{L}^{j+\frac{1}{2}}\oo{\intmink{\oo{\phi}_{\sigma^{j}}^{2}}}^{\frac{1}{2}}.
\]
In particular, if $\phi=\kappa-\bar{\phi}$, then
\begin{equation}
\mathscr{L}^{i+1-\frac{1}{l}}\oo{\intmink{\oo{k-\bar{k}}_{\sigma^{i}}^{2}}}^{\frac{1}{l}}\leq c\oo{K}\oo{K_{osc}}^{\frac{1-\alpha}{2}}\llll{k-\bar{k}}_{K,2}^{\alpha}.\label{AppendixLemma3,2}
\end{equation}
\end{lem}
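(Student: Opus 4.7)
The inequality \eqref{AppendixLemma3,1} is a Gagliardo--Nirenberg-type interpolation estimate for periodic functions, adapted to the closed one-dimensional manifold $\mathbb{S}^{1}$ with its Minkowski arc length. My plan is standard: first reparametrise $\Gamma$ by $\sigma/\mathscr{L}\in[0,1)$ so that the ambient length is normalised to $1$. This puts the explicit $\mathscr{L}$-weights into the right accounting column and reduces the problem to a pure Sobolev-type inequality for periodic functions on the unit circle. I then prove the two endpoint cases $l=2$ and $l=\infty$ of that inequality, and interpolate between them.

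For the $L^{2}$ endpoint I would iterate the elementary estimate produced by one integration by parts. For a smooth periodic $\phi$ we have
\[
\llll{\partial^j\phi}_{2}^{2}=-\intmink{\partial^{j-1}\phi\cdot\partial^{j+1}\phi}\le\llll{\partial^{j-1}\phi}_{2}\llll{\partial^{j+1}\phi}_{2},
\]
so that $j\mapsto\log\llll{\partial^j\phi}_{2}$ is convex in $j$. Induction on $K$ then yields
\[
\llll{\partial^i\phi}_{2}\le\llll{\phi}_{2}^{1-i/K}\llll{\partial^K\phi}_{2}^{i/K},\qquad 0\le i\le K,
\]
which is exactly the case $l=2$ (so $\alpha=i/K$) of \eqref{AppendixLemma3,1}.

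For the $L^{\infty}$ endpoint I would apply Lemma \ref{AppendixLemma2} to $\partial^i\phi-\overline{\partial^i\phi}$: the mean vanishes by periodicity whenever $i\ge 1$, so that
\[
\llll{\partial^i\phi}_{\infty}^{2}\le\frac{\mathscr{L}}{2\pi}\llll{\partial^{i+1}\phi}_{2}^{2},
\]
while for $i=0$ one splits $\phi=(\phi-\bar\phi)+\bar\phi$ and controls $|\bar\phi|$ by $\mathscr{L}^{-1/2}\llll{\phi}_{2}$. Inserting the log-convex $L^{2}$ estimate above upgrades this into the $L^{\infty}$ version of the interpolation. To reach intermediate $l\ge 2$ I would use the elementary Hölder inequality
\[
\llll{f}_{l}\le\llll{f}_{2}^{2/l}\llll{f}_{\infty}^{1-2/l},
\]
and chain everything together; the exponent $\alpha=(i+\tfrac{1}{2}-\tfrac{1}{l})/K$ is exactly the one forced by matching powers across the chain.

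Reinstating $\mathscr{L}$ by undoing the rescaling, and absorbing each $\mathscr{L}^{j+1/2}\llll{\partial^j\phi}_{2}$ into the definition of $\llll{\phi}_{K,2}$, produces \eqref{AppendixLemma3,1}. The specialisation \eqref{AppendixLemma3,2} is then immediate because $\intmink{(\kappa-\bar\kappa)^{2}}=\kosc/\mathscr{L}$ converts the $L^{2}$ factor on the right into the appropriate power of $\kosc$. I do not expect a serious analytic obstacle here; the delicate part is purely combinatorial bookkeeping of the $\mathscr{L}$-exponents in the chaining step, and of verifying that the constant ends up depending only on $K$ (and not separately on $i$ or $l$). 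That bookkeeping is tedious but entirely routine, resting on the basic one-dimensional Sobolev toolkit supplied by Lemmas \ref{AppendixLemma1} and \ref{AppendixLemma2}.
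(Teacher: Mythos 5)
Your overall strategy is the right one, and it is essentially the argument behind the cited result: the paper offers no proof of this lemma beyond deferring to Dziuk--Kuwert--Sch\"atzle, whose Lemma~2.4 is proved by exactly the normalise--interpolate--chain scheme you describe. (Note that the integrand $\phi_{\sigma^{i}}^{2}$ under the outer exponent $\tfrac1l$ in \eqref{AppendixLemma3,1} must be read as $|\phi_{\sigma^{i}}|^{l}$, as you implicitly do; otherwise the prefactor $\mathscr{L}^{i+1-1/l}$ is not scale-consistent.) Your $L^{2}$ endpoint via log-convexity of $j\mapsto\log\llll{\phi_{\sigma^{j}}}_{2}$ is correct, as are the final H\"older chaining and the passage to \eqref{AppendixLemma3,2} via $\kosc=\mathscr{L}\intmink{(\kappa-\bar{\kappa})^{2}}$.

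There is, however, one concrete slip that would derail the exponent bookkeeping you call routine. The $L^{\infty}$ endpoint you quote, $\llll{\phi_{\sigma^{i}}}_{\infty}^{2}\leq\tfrac{\mathscr{L}}{2\pi}\llll{\phi_{\sigma^{i+1}}}_{2}^{2}$ (the statement of Lemma \ref{AppendixLemma2}), is too lossy: feeding it into $\llll{f}_{l}\leq\llll{f}_{2}^{2/l}\llll{f}_{\infty}^{1-2/l}$ together with the $L^{2}$ interpolation produces the exponent $\alpha'=\frac{i+1-2/l}{K}$ on the high norm, not $\alpha=\frac{i+\frac12-\frac1l}{K}$. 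Since $\llll{\phi}_{K,2}\geq\mathscr{L}^{1/2}\oo{\intmink{\phi^{2}}}^{1/2}$, the bound with the larger exponent $\alpha'$ is strictly weaker and does not imply \eqref{AppendixLemma3,1}; so the claimed ``matching of powers'' fails as written. What is needed is the multiplicative form $\llll{f}_{\infty}^{2}\leq\llll{f}_{2}\llll{f_{\sigma}}_{2}$ for mean-zero periodic $f$ --- this is precisely the intermediate step inside the proof of Lemma \ref{AppendixLemma2}, before Lemma \ref{AppendixLemma1} is invoked --- supplemented by a term $\mathscr{L}^{-1}\llll{f}_{2}^{2}$ when the mean does not vanish (which is why the full norm $\llll{\phi}_{K,2}$, rather than $\llll{\phi_{\sigma^{K}}}_{2}$ alone, appears on the right of \eqref{AppendixLemma3,1}). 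With that endpoint one gets $\llll{\phi_{\sigma^{i}}}_{\infty}\lesssim\llll{\phi}_{2}^{1-(i+\frac12)/K}\llll{\phi_{\sigma^{K}}}_{2}^{(i+\frac12)/K}$ plus lower-order terms, and the chaining then yields exactly $\alpha=\frac{i+\frac12-\frac1l}{K}$. This is not cosmetic: the downstream absorption arguments (the condition $\eta<2$ in Lemma \ref{AppendixLemma5}) depend on the precise value of this exponent.
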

\begin{proof}
The proof is identical to that of Lemma $2.4$ from \cite{Kuwert3} and is of a standard interpolative nature. Note that although we use $k-\bar{k}$ in the identity (as opposed to Kuwert et all who use $k$).
\end{proof}
\begin{lem}[Dziuk, Kuwert, and Sch\"{a}tzle \cite{Kuwert3}, Proposition $2.5$]\label{AppendixLemma4}
Let $\Gamma:\mathbb{S}^{1}\rightarrow\mathbb{R}^{2}$ be a smooth closed curve. Let $\phi:\mathbb{S}^{1}\rightarrow\mathbb{R}$ be a sufficiently smooth function. Then for any term $P_{\nu}^{\mu}\oo{\phi}$ (where $P_{\nu}^{\mu}\oo{\cdot}$ denotes the same $P$-style notation introduced in Section \ref{SectionIntroduction}) with $\nu\geq2$ which contains only derivatives of $\kappa$ of order at most $K-1$, we have
\begin{equation}
\intmink{\norm{P_{\nu}^{\mu}\oo{\phi}}}\leq c\oo{K,\mu,\nu}\mathscr{L}^{1-\mu-\nu}\oo{\mathscr{L}\intmink{\phi^{2}}}^{\frac{\nu-\eta}{2}}\llll{\phi}_{K,2}^{\eta}.\label{AppendixLemma4,1}
\end{equation}
In particular, for $\phi=\kappa-\bar{\kappa}$ we have the estimate
\begin{equation}
\intmink{\norm{P_{\nu}^{\mu}\oo{\kappa-\bar{\kappa}}}}\leq c\oo{K,\mu,\nu}\mathscr{L}^{1-\mu-\nu}\oo{K_{osc}}^{\frac{\nu-\eta}{2}}\llll{\kappa-\bar{\kappa}}_{K,2}^{\eta}\label{AppendixLemma4,2}
\end{equation}
where $\eta=\frac{\mu+\frac{\nu}{2}-1}{K}$.
\end{lem}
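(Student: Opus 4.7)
The plan is to reduce \eqref{AppendixLemma4,1} to a single-factor interpolation by combining H\"older's inequality with Lemma \ref{AppendixLemma3} applied term by term, following the strategy of Dziuk--Kuwert--Sch\"atzle. By definition,
\[
P_{\nu}^{\mu}\oo{\phi}=\sum c\,\partial_{\sigma}^{r_{1}}\phi\cdots\partial_{\sigma}^{r_{\nu}}\phi,\qquad r_{1}+\cdots+r_{\nu}=\mu,\ r_{j}\leq K-1,
\]
so it suffices to estimate a single product. Because $\nu\geq2$, I would apply H\"older's inequality with equal exponents $l_{j}=\nu$ to obtain
\[
\intmink{\norm{\partial_{\sigma}^{r_{1}}\phi\cdots\partial_{\sigma}^{r_{\nu}}\phi}}\leq\prod_{j=1}^{\nu}\oo{\intmink{\norm{\partial_{\sigma}^{r_{j}}\phi}^{\nu}}}^{\frac{1}{\nu}}.
\]
Each factor is then ripe for interpolation.

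Next I would apply Lemma \ref{AppendixLemma3} to each factor with $i=r_{j}<K$ and $l=\nu\geq2$, giving
\[
\oo{\intmink{\norm{\partial_{\sigma}^{r_{j}}\phi}^{\nu}}}^{\frac{1}{\nu}}\leq c\,\mathscr{L}^{-r_{j}-1+\frac{1}{\nu}}\oo{\mathscr{L}\intmink{\phi^{2}}}^{\frac{1-\alpha_{j}}{2}}\llll{\phi}_{K,2}^{\alpha_{j}},
\]
where $\alpha_{j}=\oo{r_{j}+\tfrac{1}{2}-\tfrac{1}{\nu}}/K$ (after absorbing a factor of $\mathscr{L}^{-(1-\alpha_{j})/2}$ on the $L^{2}$-term in order to regroup it as $\mathscr{L}\intmink{\phi^{2}}$). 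Multiplying these estimates across $j=1,\dots,\nu$ produces a single scaling factor in $\mathscr{L}$ and additive exponents on the remaining pieces.

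The key bookkeeping step is then to read off the exponents: the combined power of $\mathscr{L}$ becomes
\[
-\sum_{j=1}^{\nu}\oo{r_{j}+1-\tfrac{1}{\nu}}=-\mu-\nu+1,
\]
and the total interpolation exponent is
\[
\sum_{j=1}^{\nu}\alpha_{j}=\frac{\mu+\frac{\nu}{2}-1}{K}=\eta,
\]
so that the $\oo{\mathscr{L}\intmink{\phi^{2}}}$-factor accumulates to the desired power $(\nu-\eta)/2$ and the $\llll{\phi}_{K,2}$-factor accumulates to $\eta$. Summing the (finitely many) terms in $P_{\nu}^{\mu}\oo{\phi}$ then yields \eqref{AppendixLemma4,1}. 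The second inequality \eqref{AppendixLemma4,2} is then immediate on setting $\phi=\kappa-\bar{\kappa}$ and recognising $\mathscr{L}\intmink{(\kappa-\bar{\kappa})^{2}}=\kosc$.

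The bookkeeping described above is the whole content of the lemma, and there is no serious analytic obstacle — the only delicate point is to confirm that every $r_{j}$ satisfies $r_{j}\leq K-1$ (so that Lemma \ref{AppendixLemma3} is applicable) and that $\nu\geq2$ (so that the H\"older exponent $\nu$ is admissible in \eqref{AppendixLemma3,1}); both are built into the hypotheses. The universal constant $c(K,\mu,\nu)$ depends only on $K,\mu,\nu$ because it absorbs the finitely many combinatorial coefficients of $P_{\nu}^{\mu}$ and the constant $c(K)$ from Lemma \ref{AppendixLemma3}.
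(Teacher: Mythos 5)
Your proposal is correct and follows essentially the same route as the paper: H\"older's inequality with equal exponents $\nu$ on each product term, Lemma \ref{AppendixLemma3} applied factorwise with $l=\nu$ and $i=r_j\leq K-1$, and the exponent bookkeeping $\sum_j\alpha_j=\eta$ together with the $\mathscr{L}$-scaling $-\sum_j(r_j+1-\tfrac1\nu)=1-\mu-\nu$. The specialisation to $\phi=\kappa-\bar{\kappa}$ via $\mathscr{L}\intmink{(\kappa-\bar{\kappa})^2}=\kosc$ is likewise exactly the paper's final step.
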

\begin{proof}
Using H\"older's inequality and Lemma \ref{AppendixLemma3} with $K=\nu$, if $\sum_{j=1}^{\nu}i_{j}=\mu$ we have
\begin{align}
\intmink{\norm{\phi_{\sigma^{i_{1}}}*\cdots*\phi_{\sigma^{i_{\nu}}}}}&\leq\prod_{j=1}^{\nu}\oo{\intmink{\phi_{\sigma^{i_{j}}}^{\nu}}}^{\frac{1}{\nu}}\\
&=\mathscr{L}^{1-\mu-\nu}\prod_{j=1}^{\nu}\mathscr{L}^{i_{j}+1-\frac{1}{\nu}}\oo{\intmink{\phi_{\sigma^{i_{j}}}^{\nu}}}^{\frac{1}{\nu}}\nonumber\\
&\leq c\oo{K,\mu,\nu}\mathscr{L}^{1-\mu-\nu}\prod_{j=1}^{\nu}\oo{\mathscr{L}\intmink{\phi^{2}}}^{\frac{1-\alpha_{j}}{2}}\llll{\phi}_{K,2}^{\alpha_{j}}\label{AppendixLemma4,3}
\end{align}
where $\alpha_{j}=\frac{i_{j}+\frac{1}{2}-\frac{1}{\nu}}{K}$. Now
\[
\sum_{j=1}^{\nu}\alpha_{j}=\frac{1}{K}\sum_{j=1}^{\nu}\oo{i_{j}+\frac{1}{2}-\frac{1}{\nu}}=\frac{\mu+\frac{\nu}{2}-1}{K}=\eta,
\]
and so substituting this into \eqref{AppendixLemma4,3} gives the first inequality of the lemma. It is then a simple matter of substituting $\phi=\kappa-\bar{\kappa}$ into this result to prove statement \eqref{AppendixLemma4,2}.
\end{proof}
\begin{lem}[Dziuk, Kuwert, and Sch\"{a}tzle \cite{Kuwert3}]\label{AppendixLemma5}
Let $\Gamma:\mathbb{S}^{1}\rightarrow\mathbb{R}^{2}$ be a smooth closed curve and $\phi:\mathbb{S}^{1}\rightarrow\mathbb{R}$ a sufficiently smooth function.
Then for any term $P_{\nu}^{\mu}\oo{\phi}$ with $\nu\geq2$ which contains only derivatives of $\kappa$ of order at most $K-1$, we have for any $\varepsilon>0$
\begin{multline}
\intmink{\norm{P_{\nu}^{\mu,K-1}\oo{\phi}}}\\
\leq c\oo{K,\mu,\nu}\mathscr{L}^{1-\mu-\nu}\oo{\mathscr{L}\intmink{\phi^{2}}}^{\frac{\nu-\eta}{2}}\oo{\mathscr{L}^{2K+1}\intmink{\phi_{\sigma^{K}}^{2}}+\mathscr{L}\intmink{\phi^{2}}}^{\frac{\eta}{2}}.\label{AppendixLemma5,1}
\end{multline}
Moreover if $\mu+\frac{1}{2}\nu<2K+1$ then $\eta<2$ and we have for any $\varepsilon>0$
\begin{equation}
\intmink{\norm{P_{\nu}^{\mu,K-1}\oo{\phi}}}\leq\varepsilon\intmink{\phi_{\sigma^{K}}^{2}}+c\cdot\varepsilon^{-\frac{\eta}{2-\eta}}\oo{\intmink{\phi^{2}}}^{\frac{\nu-\eta}{2-\eta}}+c\oo{\intmink{\phi^{2}}}^{\mu+\nu-1}.\label{AppendixLemma5,2}
\end{equation}
In particular, for $\phi=\kappa-\bar{\kappa}$, we have the estimate
\[
\intmink{\norm{P_{\nu}^{\mu}\oo{\kappa-\bar{\kappa}}}}\leq c\oo{K,\mu,\nu}\mathscr{L}^{1-\mu-\nu}\oo{\kosc}^{\frac{\nu-\eta}{2}}\oo{\mathscr{L}^{2K+1}\intmink{\oo{\kappa-\bar{\kappa}}_{\sigma^{K}}^{2}}}^{\frac{\eta}{2}}.
\]
Here, as before, $\eta=\frac{\mu+\frac{\nu}{2}-1}{K}$. 
\end{lem}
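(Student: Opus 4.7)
The plan is to bootstrap from Lemma \ref{AppendixLemma4}. That lemma already delivers
\[
\intmink{\norm{P_{\nu}^{\mu,K-1}\oo{\phi}}}
\leq c\,\mathscr{L}^{1-\mu-\nu}\oo{\mathscr{L}\intmink{\phi^{2}}}^{\frac{\nu-\eta}{2}}\llll{\phi}_{K,2}^{\eta},
\]
so the whole task is to replace the norm $\llll{\phi}_{K,2}$, which involves \emph{all} derivatives up to order $K$, by an expression featuring only $\phi$ and $\phi_{\sigma^{K}}$. The key intermediate step I would prove first is the scale-invariant Gagliardo--Nirenberg type interpolation
\[
\mathscr{L}^{j+\frac{1}{2}}\oo{\intmink{\phi_{\sigma^{j}}^{2}}}^{\frac{1}{2}}
\leq c\oo{K}\oo{\mathscr{L}^{K+\frac{1}{2}}\oo{\intmink{\phi_{\sigma^{K}}^{2}}}^{\frac{1}{2}}}^{\frac{j}{K}}\oo{\mathscr{L}^{\frac{1}{2}}\oo{\intmink{\phi^{2}}}^{\frac{1}{2}}}^{\frac{K-j}{K}}
\]
for $0\leq j\leq K$, which follows from iterated integration by parts and Cauchy--Schwarz in the spirit of Lemma \ref{AppendixLemma0}. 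Summing over $j$ and applying Young's inequality then gives the clean bound
\[
\llll{\phi}_{K,2}^{2}\leq c\oo{K}\oo{\mathscr{L}^{2K+1}\intmink{\phi_{\sigma^{K}}^{2}}+\mathscr{L}\intmink{\phi^{2}}}.
\]
Substituting into the conclusion of Lemma \ref{AppendixLemma4} yields \eqref{AppendixLemma5,1} immediately.

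For \eqref{AppendixLemma5,2}, first observe that the hypothesis $\mu+\tfrac{\nu}{2}<2K+1$ is precisely $\eta<2$, which makes Young's inequality available with conjugate exponents $p=\tfrac{2}{\eta}>1$ and $q=\tfrac{2}{2-\eta}$. I would expand the right hand side of \eqref{AppendixLemma5,1} by binomial distribution into a piece containing $\mathscr{L}^{2K+1}\intmink{\phi_{\sigma^{K}}^{2}}$ and a piece containing only $\mathscr{L}\intmink{\phi^{2}}$. To the first piece apply Young's inequality with the exponents above, choosing the $\varepsilon$-weight so as to absorb an $\varepsilon\intmink{\phi_{\sigma^{K}}^{2}}$ term on the left; the companion term produces the factor $\varepsilon^{-\eta/(2-\eta)}\oo{\intmink{\phi^{2}}}^{(\nu-\eta)/(2-\eta)}$ once the powers of $\mathscr{L}$ are tracked. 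The second piece, in which only $\intmink{\phi^{2}}$ appears, collapses to $c\oo{\intmink{\phi^{2}}}^{\mu+\nu-1}$ after the $\mathscr{L}$-powers cancel by scaling.

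The particular case $\phi=\kappa-\bar{\kappa}$ is then a substitution, using $\mathscr{L}\intmink{\oo{\kappa-\bar{\kappa}}^{2}}=\kosc$ by definition.

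The main obstacle I anticipate is bookkeeping: one must verify that the powers of $\mathscr{L}$ match on both sides and that the Young split produces exactly the exponents $\tfrac{\nu-\eta}{2-\eta}$ and $\mu+\nu-1$ stated. This is a direct but delicate computation, relying critically on the relation $\eta K=\mu+\tfrac{\nu}{2}-1$ so that the $\mathscr{L}$ factors from the interpolation cancel precisely the prefactor $\mathscr{L}^{1-\mu-\nu}$ coming from Lemma \ref{AppendixLemma4}. The remainder of the argument is essentially routine.
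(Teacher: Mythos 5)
Your proposal follows essentially the same route as the paper: Lemma \ref{AppendixLemma4} combined with the interpolation $\llll{\phi}_{K,2}^{2}\leq c\oo{K}\oo{\mathscr{L}^{2K+1}\intmink{\phi_{\sigma^{K}}^{2}}+\mathscr{L}\intmink{\phi^{2}}}$ (which the paper obtains from repeated applications of Lemma \ref{AppendixLemma0}) gives \eqref{AppendixLemma5,1}, and a Young-type splitting gives \eqref{AppendixLemma5,2}. The only point you underplay is the final claim: substituting $\phi=\kappa-\bar{\kappa}$ into \eqref{AppendixLemma5,1} leaves the factor $\oo{\mathscr{L}^{2K+1}\intmink{\oo{\kappa-\bar{\kappa}}_{\sigma^{K}}^{2}}+\kosc}^{\eta/2}$, and to drop the $\kosc$ summand and reach the stated form you still need the Poincar\'e-type bound $\kosc\leq c\oo{K}\mathscr{L}^{2K+1}\intmink{\oo{\kappa-\bar{\kappa}}_{\sigma^{K}}^{2}}$, which holds because $\kappa-\bar{\kappa}$ has zero mean (iterate Lemma \ref{AppendixLemma1}); this is not merely ``a substitution,'' though it is a one-line fix.
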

\begin{proof}
Combining the previous lemma with the following standard interpolation inequality from that follows from repeated applications of Lemma \ref{AppendixLemma0} (and is also found in \cite{Aubin1})
\[
\llll{\phi}_{K,2}^{2}\leq c\oo{K}\oo{\mathscr{L}^{2K+1}\intmink{\phi_{\sigma^{K}}^{2}}+\mathscr{L}\intmink{\phi^{2}}}
\]
yields the identity \eqref{AppendixLemma5,1} immediately. To prove \eqref{AppendixLemma5,2} we simply combine \eqref{AppendixLemma5,1} with the Cauchy-Schwarz identity. The final identity of the Lemma follow by letting $\phi=\kappa-\bar{\kappa}$ in \eqref{AppendixLemma5,1} and combining this with the identity
\begin{equation}
K_{osc}\leq \mathscr{L}\oo{\frac{\mathscr{L}^{2}}{4\pi^{2}}}^{K}\intmink{\oo{\kappa-\bar{\kappa}}_{\sigma^{K}}^{2}}= c\oo{K}\mathscr{L}^{2K+1}\intmink{\oo{\kappa-\bar{\kappa}}_{\sigma^{K}}^{2}},\label{AppendixLemma5,3}
\end{equation}
which is a direct consequence of applying Lemma \ref{AppendixLemma1} $\oo{p+1}$ times repeatedly.
\end{proof}

\begin{lem}\label{AppendixLemma4}
Let $\Gamma:\mathbb{S}^{1}\rightarrow\mathcal{M}^{2}$ be a closed Minkowski curve with Minkowski curvature $\kappa$. Then for any $m,L\in\mathbb{N}$ with $m<L$ we have the estimate
\[
\intmink{\kappa_{\sigma^{m}}^{2}}\leq\oo{\intmink{\oo{\kappa-\bar{\kappa}}^{2}}}^{1-\frac{m}{L}}\oo{\intmink{\kappa_{\sigma^{L}}^{2}}}^{\frac{m}{L}}.
\]
\end{lem}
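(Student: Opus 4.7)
The plan is to reduce the lemma to the logarithmic convexity of the sequence $a_m := \intmink{\phi_{\sigma^m}^2}$, where $\phi := \kappa - \bar{\kappa}$. Since $\bar{\kappa}$ is constant, $\kappa_{\sigma^m} = \phi_{\sigma^m}$ for every $m \geq 1$, so the claim is equivalent to $a_m \leq a_0^{1-m/L}\,a_L^{m/L}$ with $a_0 = \intmink{\oo{\kappa-\bar{\kappa}}^2}$. (Note that one implicitly needs $m \geq 1$ for the statement to be correct: at $m = 0$ the left-hand side would be $\intmink{\kappa^2} = a_0 + \bar{\kappa}^2 \mathscr{L}$, which exceeds the right-hand side $a_0$ whenever $\bar{\kappa} \neq 0$.)

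The central ingredient I would establish is the three-term inequality
\[
a_m^2 \leq a_{m-1}\,a_{m+1} \qquad (m \geq 1).
\]
This follows from a single integration by parts on the closed curve (the boundary terms vanish by periodicity of $\sigma$) followed by Cauchy--Schwarz:
\[
a_m = \intmink{\phi_{\sigma^m}\,\phi_{\sigma^m}} = -\intmink{\phi_{\sigma^{m-1}}\,\phi_{\sigma^{m+1}}} \leq \sqrt{a_{m-1}}\,\sqrt{a_{m+1}}.
\]

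To finish, set $b_m := \log a_m$. The three-term inequality says exactly that the forward differences $d_j := b_{j+1} - b_j$ are non-decreasing in $j$. Hence for $1 \leq m < L$ the average of $d_0,\dots,d_{m-1}$ is bounded above by the average of $d_m,\dots,d_{L-1}$, giving
\[
\frac{b_m - b_0}{m} \leq \frac{b_L - b_m}{L-m},
\]
which rearranges to $b_m \leq (1 - m/L)\,b_0 + (m/L)\,b_L$; exponentiating yields the lemma. The only subtlety, which is not really an obstacle, is the degenerate case where some $a_k = 0$: then $\phi_{\sigma^k} \equiv 0$, and combined with $\mathscr{L}$-periodicity together with the mean-zero condition $\intmink{\phi} = 0$, this forces $\phi \equiv 0$, so every $a_j$ vanishes and the inequality is trivially true. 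The substantive content of the argument is the one-line integration-by-parts plus Cauchy--Schwarz estimate above.
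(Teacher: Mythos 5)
Your proof is correct, and the analytic heart of it --- one integration by parts on the closed curve followed by Cauchy--Schwarz --- is exactly the same computation the paper relies on. The bootstrapping from that single estimate to the full interpolation is organised differently, however. The paper first proves, by induction on $m$ with an absorption (Young's inequality) step, the asymmetric one-step inequality
\[
\intmink{\kappa_{\sigma^{m}}^{2}}\leq\oo{\intmink{\oo{\kappa-\bar{\kappa}}^{2}}}^{\frac{1}{m+1}}\oo{\intmink{\kappa_{\sigma^{m+1}}^{2}}}^{\frac{m}{m+1}},
\]
and then chains these from $m$ up to $L$, summing a telescoping series to identify the final exponent $1-\tfrac{m}{L}$. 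You instead extract the symmetric three-term inequality $a_m^2\leq a_{m-1}a_{m+1}$ and invoke discrete log-convexity, which replaces both the induction-with-absorption and the telescoping computation with the standard monotone-differences argument. Your route is shorter and more transparent, and it has the additional merit of explicitly disposing of the degenerate case $a_k=0$ (needed before taking logarithms, and correctly reduced to $\phi\equiv 0$ via periodicity and the mean-zero condition), a point the paper's write-up passes over. Your parenthetical observation that the statement requires $m\geq 1$ --- since at $m=0$ the left side is $\intmink{\kappa^{2}}$ rather than $\intmink{(\kappa-\bar{\kappa})^{2}}$ --- is also a fair and worthwhile caveat, consistent with the paper's convention that $\mathbb{N}$ excludes $0$.
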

\begin{proof}
First we prove the intermediate identity
\begin{equation}
\intmink{\kappa_{\sigma^{m}}^{2}}\leq\oo{\intmink{\oo{\kappa-\bar{\kappa}}^{2}}}^{\frac{1}{m+1}}\oo{\intmink{\kappa_{\sigma^{m+1}}^{2}}}^{\frac{m}{m+1}},\,\,m\in\mathbb{N}.\label{AppendixLemma4,1}
\end{equation}
For $m=1$ we simply use integration by parts and H\"older's inequality:
\begin{align*}
\intmink{\kappa_{\sigma}^{2}}&=-\intmink{\oo{\kappa-\bar{\kappa}}\kappa_{\sigma\sigma}}\leq\oo{\intmink{\oo{\kappa-\bar{\kappa}}^{2}}}^{\frac{1}{2}}\oo{\intmink{\kappa_{\sigma\sigma}^{2}}}^{\frac{1}{2}}\\
&=\oo{\intmink{\oo{\kappa-\bar{\kappa}}^{2}}}^{\frac{1}{1+1}}\oo{\intmink{\kappa_{\sigma\sigma}^{2}}}^{\frac{1}{1+1}}.
\end{align*}
Next we inductively assume that \eqref{AppendixLemma4,1} is true for $m=1,2,\dots,M$. Using integration by parts and H\"older's inequality once more gives
\begin{align}
&\intmink{\kappa_{\sigma^{M+1}}^{2}}=-\intmink{\kappa_{\sigma^{M}}\kappa_{\sigma^{M+1}}}\leq\oo{\intmink{\kappa_{\sigma^{M}}^{2}}}^{\frac{1}{2}}\oo{\intmink{\kappa_{\sigma^{M+1}}^{2}}}^{\frac{1}{2}}\nonumber\\
&\quad\leq\oo{\intmink{\oo{\kappa-\bar{\kappa}}^{2}}}^{\frac{1}{2\oo{M+1}}}\oo{\intmink{\kappa_{\sigma^{M+1}}^{2}}}^{\frac{M}{2\oo{M+1}}}\oo{\intmink{\kappa_{\sigma^{M+2}}^{2}}}^{\frac{1}{2}}\nonumber\\
&\quad\leq\frac{M}{2\oo{M+1}}\intmink{\kappa_{\sigma^{M+1}}^{2}}+\frac{M+2}{2\oo{M+1}}\oo{\intmink{\oo{\kappa-\bar{\kappa}}^{2}}}^{\frac{1}{M+2}}\oo{\intmink{\kappa_{\sigma^{M+2}}^{2}}}^{\frac{M+1}{M+2}}.\nonumber
\end{align}
Absorbing gives the statement \eqref{AppendixLemma4,1} for $M+1$, completing the induction. Here we have used our inductive assumption in the second line, and H\"older's inequality with $p=\frac{2\oo{M+1}}{M},q=\frac{2\oo{M+1}}{M+2}$ in the last line. To arrive at the claim of the lemma, we simply employ the inequality \eqref{AppendixLemma4,1} repeatedly:
\begin{align}
\intmink{\kappa_{\sigma^{m}}^{2}}&\leq\oo{\intmink{\oo{\kappa-\bar{\kappa}}^{2}}}^{\frac{1}{m+1}}\oo{\intmink{\kappa_{\sigma^{m+1}}^{2}}}^{\frac{m}{m+1}}\nonumber\\
&\leq\oo{\intmink{\oo{\kappa-\bar{\kappa}}^{2}}}^{\frac{1}{m+1}}\oo{\oo{\intmink{\oo{\kappa-\bar{\kappa}}^{2}}}^{\frac{1}{m+2}}\oo{\intmink{\kappa_{\sigma^{m+2}}^{2}}}^{\frac{m+1}{m+2}}}^{\frac{m}{m+1}}\nonumber\\
&=\oo{\intmink{\oo{\kappa-\bar{\kappa}}^{2}}}^{\frac{m}{m\oo{m+1}}+\frac{m}{\oo{m+1}\oo{m+2}}}\oo{\intmink{\kappa_{\sigma^{m+2}}^{2}}}^{\frac{m}{m+2}}\nonumber\\
&\vdots\nonumber\\
&\leq\oo{\intmink{\oo{\kappa-\bar{\kappa}}^{2}}}^{m\sum_{i=0}^{L-m-1}\left\{\frac{1}{\oo{m+i}\oo{m+i+1}}\right\}}\oo{\intmink{\kappa_{\sigma^{L}}^{2}}}^{\frac{m}{L}}.\label{AppendixLemma4,2}
\end{align}
Now $\sum_{i=0}^{L-m-1}\left\{\frac{1}{\oo{m+i}\oo{m+i+1}}\right\}$ is a telescoping series, which sums to $1/m-1/L$. Substituting this value into \eqref{AppendixLemma4,2} then proves the lemma.
\end{proof}

\begin{thm}[\cite{Breuning1}, Theorem $1.1$]\label{AppendixTheorem1}
Let $q\in\mathbb{R}^{n}$, $m,p\in\mathbb{N}$ with $p>m$. Additionally, let $\mathcal{A},\mathcal{V}>0$ be some fixed constants. Let $\mathfrak{T}$ be the set of all mappings $f:\Sigma:\rightarrow\mathbb{R}^{n}$ with the following properties:
\begin{itemize}
\item $\Sigma$ is an $m$-dimensional, compact manifold (without boundary)

\item $f$ is an immersion in $W^{2,p}\oo{\Sigma,\mathbb{R}^{n}}$ satisfying
\begin{align*}
\llll{A\oo{f}}_{p}&\leq\mathcal{A},\\
\text{vol}\oo{\Sigma}&\leq\mathcal{V},\text{  and  }\\
q&\in f\oo{\Sigma}.
\end{align*}
\end{itemize}
Then for every sequence $f^{i}:\Sigma^{i}\rightarrow\mathbb{R}^{n}$ in $\mathfrak{T}$ there is a subsequence $f^{j}$, a mapping $f:\Sigma\rightarrow\mathbb{R}^{n}$ in $\mathfrak{T}$ and a sequence of diffeomorphisms $\phi^{j}:\Sigma\rightarrow\Sigma^{j}$ such that $f^{j}\circ\phi^{j}$ converges in the $C^{1}$-topology to $f$.
\end{thm}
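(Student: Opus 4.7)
The result is a Cheeger--Gromov--type compactness theorem for $W^{2,p}$ immersions of compact $m$-manifolds into $\mathbb{R}^{n}$ in the subcritical regime $p>m$. I would prove it by the classical three-step scheme: first obtain a uniform local graphical representation, then cover each image by a bounded number of such graph patches, and finally glue the subsequential limits into a compact abstract manifold $\Sigma$ equipped with an immersion $f$ and compatible reparametrizations $\phi^{j}$.

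For the uniform graphical representation, the hypothesis $p>m$ gives the Sobolev embedding $W^{2,p}\hookrightarrow C^{1,\alpha}$ with $\alpha=1-m/p$, and the bound $\|A(f^{i})\|_{p}\leq\mathcal{A}$ provides enough $L^{p}$ mean-curvature control to run Simon's Euclidean monotonicity formula. Together these yield a universal scale $\rho=\rho(\mathcal{A},m,p,n)>0$ such that for every $i$ and every $x_{0}\in\Sigma^{i}$, the set $f^{i}(\Sigma^{i})\cap B_{\rho}(f^{i}(x_{0}))$ decomposes into graphs over the relevant tangent $m$-plane whose $C^{1,\alpha}$ graph function and $L^{p}$ second derivatives are uniformly bounded. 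The volume bound $\text{vol}(\Sigma^{i})\leq\mathcal{V}$ and the lower density estimate from monotonicity together confine $f^{i}(\Sigma^{i})$ to a bounded region of $\mathbb{R}^{n}$ containing $q$ and force a uniform upper bound $N=N(\mathcal{A},\mathcal{V},m,p,n)$ on the number of $\rho$-patches needed to cover. After passing to a subsequence, patch centres converge in $\mathbb{R}^{n}$, tangent planes converge in the Grassmannian, and Arzela--Ascoli extracts $C^{1}$ subsequential limits of each graph function (with weak $W^{2,p}$ limits, so that $\|A(f)\|_{p}\leq\mathcal{A}$ follows by lower semicontinuity).

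The main obstacle is the third step, the abstract glueing. I would define $\Sigma$ by taking a disjoint union of open $m$-balls, one per limiting patch, and quotienting by the limiting transition maps; compactness of $\Sigma$ follows from the bounded number $N$ of patches, and the manifold structure from quantitative openness of the graph charts provided by the uniform $C^{1,\alpha}$ bound. The reparametrizations $\phi^{j}$ are built chart-by-chart: on each limit patch, the $C^{1}$ proximity of the $f^{j}$-patch to its limit (for $j$ large) lets one invoke the inverse function theorem to obtain a local diffeomorphism into $\Sigma^{j}$, and a partition of unity subordinate to the cover glues these into a global $\phi^{j}$. The delicate point is verifying that the limiting transition maps remain diffeomorphisms and that the overlapping charts are consistently reparametrized; both issues are handled by the uniform lower bound $\rho$ from the first step. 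By construction $f^{j}\circ\phi^{j}\to f$ in $C^{1}$, the volume bound passes to the limit by lower semicontinuity, and the anchoring $q\in f^{j}(\Sigma^{j})$ passes to $q\in f(\Sigma)$, placing $f\in\mathfrak{T}$.
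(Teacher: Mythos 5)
The paper offers no proof of this statement: it is imported verbatim from Breuning (\cite{Breuning1}, Theorem 1.1) and used as a black box in the proof of Theorem \ref{Theorem3}, so there is no in-paper argument to compare yours against. Measured against Breuning's actual proof, which generalises Langer's compactness theorem for immersions with bounded second fundamental form, your outline follows the same three-step strategy (uniform graph scale, finite patch count, abstract gluing) and the skeleton is the right one.

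Two caveats on the details. First, the appeal to Simon's monotonicity formula is unnecessary and somewhat misplaced: in the Langer--Breuning argument the uniform radius $\rho$ and the lower volume bound per patch both come directly from the local graph representation lemma, which is proved from $\llll{A\oo{f}}_{p}\leq\mathcal{A}$ with $p>m$ by a continuity argument controlling the tilt of the tangent planes; the $C^{1,\alpha}$ control with $\alpha=1-m/p$ is exactly the Sobolev embedding you cite, and once a patch is a graph over a $\rho$-ball its volume is at least $\omega_{m}\rho^{m}$, so no density ratios are needed. Second, and more substantively, gluing the local inverse-function-theorem maps into global diffeomorphisms $\phi^{j}$ by a partition of unity is precisely the step where a convex combination of diffeomorphisms can fail to be a diffeomorphism; this is the bulk of the technical work in Langer's and Breuning's papers, and it is resolved not merely by the uniform scale $\rho$ but by quantitative $C^{1}$-closeness of the competing local parametrisations on overlaps, which is what forces the interpolated map to remain an injective immersion. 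You correctly identify this as the delicate point but then assert rather than establish it, so as a proof your proposal is incomplete there; as a strategy it is faithful to the cited source. (A minor further point: confining the images to a bounded region of $\mathbb{R}^{n}$ uses $q\in f^{i}\oo{\Sigma^{i}}$ together with connectedness through the bounded chain of patches; for the application in this paper, to closed curves $\mathbb{S}^{1}\rightarrow\mathcal{M}^{2}$, this is automatic.)
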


\end{section}

\bibliographystyle{plain}

\bibliography{ThesisBibliography}

\begin{thebibliography}{10}

\bibitem{Abresch1}
Uwe Abresch and Joel Langer.
\newblock The normalized curve shortening flow and homothetic solutions.
\newblock {\em Journal of Differential Geometry}, 23(2):175--196, 1986.

\bibitem{Altschuler1}
Steven~J Altschuler.
\newblock {\em Singularities of the curve shrinking flow for space curves}.
\newblock PhD thesis, University of California, San Diego, Department of
  Mathematics, 1990.

\bibitem{Aubin1}
Thierry Aubin.
\newblock {\em Nonlinear analysis on manifolds: Monge-Ampere equations}, volume
  252.
\newblock Springer, 1982.

\bibitem{Breuning1}
Patrick Breuning.
\newblock Immersions with bounded second fundamental form.
\newblock {\em arXiv preprint arXiv:1201.4562}, 2012.

\bibitem{chakerian1960isoperimetric}
G.D. Chakerian.
\newblock The isoperimetric problem in the {M}inkowski plane.
\newblock {\em American Mathematical Monthly}, pages 1002--1004, 1960.

\bibitem{do1976differential}
Manfredo~Perdigao Do~Carmo and Manfredo~Perdigao Do~Carmo.
\newblock {\em Differential geometry of curves and surfaces}, volume~2.
\newblock Prentice-hall Englewood Cliffs, 1976.

\bibitem{dym1985fourier}
Harry Dym, Henry~P McKean, David Aldous, and Yung~L Tong.
\newblock Fourier series and integrals.
\newblock 1985.

\bibitem{Kuwert3}
Gerhard Dziuk, Ernst Kuwert, and Reiner Schatzle.
\newblock Evolution of elastic curves in $\mathbb{R}^{n}$: Existence and
  computation.
\newblock {\em SIAM journal on mathematical analysis}, 33(5):1228--1245, 2002.

\bibitem{edwards2014shrinking}
Maureen Edwards, Alexander Gerhardt-Bourke, James McCoy, Glen Wheeler, and
  Valentina-Mira Wheeler.
\newblock The shrinking figure eight and other solitons for the curve diffusion
  flow.
\newblock {\em Journal of Elasticity}, 119(1-2):191--211, 2014.

\bibitem{Hamilton2}
Michael Gage and Richard~S Hamilton.
\newblock The heat equation shrinking convex plane curves.
\newblock {\em J. Differential Geom}, 23(1):69--96, 1986.

\bibitem{gage1983isoperimetric}
Michael~E Gage.
\newblock An isoperimetric inequality with applications to curve shortening.
\newblock {\em Duke Math. J}, 50(4):1225--1229, 1983.

\bibitem{Gage2}
Michael~E Gage.
\newblock Curve shortening makes convex curves circular.
\newblock {\em Inventiones mathematicae}, 76(2):357--364, 1984.

\bibitem{Gage1}
Michael~E Gage and Yi~Li.
\newblock Evolving plane curves by curvature in relative geometries.
\newblock In {\em Duke Math. J}. Citeseer, 1993.

\bibitem{grayson1987heat}
Matthew~A Grayson.
\newblock The heat equation shrinks embedded plane curves to round points.
\newblock {\em Journal of Differential geometry}, 26(2):285--314, 1987.

\bibitem{martini2004geometry}
Horst Martini and Konrad~J Swanepoel.
\newblock The geometry of {M}inkowski spaces-a survey. part ii.
\newblock {\em Expositiones mathematicae}, 22(2):93--144, 2004.

\bibitem{martini2001geometry}
Horst Martini, Konrad~J Swanepoel, and Gunter Wei{\ss}.
\newblock The geometry of {M}inkowski spaces-a survey. part i.
\newblock {\em Expositiones mathematicae}, 19(2):97--142, 2001.

\bibitem{parkins2015polyharmonic}
Scott Parkins and Glen Wheeler.
\newblock The polyharmonic heat flow of closed plane curves.
\newblock {\em arXiv preprint arXiv:1505.02877}, 2015.

\bibitem{pozzi2012gradient}
Paola Pozzi.
\newblock On the gradient flow for the anisotropic area functional.
\newblock {\em Mathematische Nachrichten}, 285(5-6):707--726, 2012.

\bibitem{Wheeler5}
Glen Wheeler.
\newblock On the curve diffusion flow of closed plane curves.
\newblock {\em Annali di Matematica Pura ed Applicata}, 192(5):931--950, 2013.

\end{thebibliography}

\end{document}